\newtheorem{thm}{Theorem}[section]
\newtheorem{prop}[thm]{Proposition}
\newtheorem{cor}[thm]{Corollary}
\newtheorem{lemma}[thm]{Lemma}
\theoremstyle{remark}
\newtheorem{rmk}[thm]{Remark}
\newtheorem{example}[thm]{Example}
\theoremstyle{definition}
\newtheorem{defn}[thm]{Definition}
\DeclareMathOperator{\Ad}{Ad}
\newcommand{\bi}{\begin{itemize}}
\newcommand{\ei}{\end{itemize}}
\newcommand{\be}{\begin{enumerate}}
\newcommand{\ee}{\end{enumerate}}
\newcommand{\T}{\mathbb{T}}
\renewcommand{\H}{\mathcal{H}}
\newcommand{\F}{\mathcal{F}}
\newcommand{\R}{\mathbb{R}}
\newcommand{\N}{\mathbb{N}}
\newcommand{\Z}{\mathbb{Z}}
\providecommand{\keywords}[1]{{\textit{Keywords and phrases:}} #1}
\providecommand{\classification}[1]{{\textit{2010 Mathematics Subject Classification:}} #1}
\def\IoIIdimdots(#1/#2/#3,#4){\node at (#1,#4) {$.$};\node at (#2,#4) {$.$};\node at (#3,#4) {$.$};}
\def\IIoIIdimdots(#1,#2/#3/#4){\node at (#1,#2) {$.$};\node at (#1,#3) {$.$};\node at (#1,#4) {$.$};}
\def\IoIIIdimdots(#1/#2/#3,#4,#5){\node at (#1,#4,#5) {$.$};\node at (#2,#4,#5) {$.$};\node at (#3,#4,#5) {$.$};}
\def\IIoIIIdimdots(#1,#2/#3/#4,#5){\node at (#1,#2,#5) {$.$};\node at (#1,#3,#5) {$.$};\node at (#1,#4,#5) {$.$};}
\def\IIIoIIIdimdots(#1,#2,#3/#4/#5){\node at (#1,#2,#3) {$.$};\node at (#1,#2,#4) {$.$};\node at (#1,#2,#5) {$.$};}
\begin{document}

\title{Representations of higher-rank graph $C^*$-algebras associated to $\Lambda$-semibranching function systems}
\author{Carla Farsi, Elizabeth Gillaspy, Palle Jorgensen,  Sooran Kang, and Judith Packer}
\date{\today}
\maketitle

\begin{abstract} 
In this paper, we discuss a method of constructing separable  representations of the  $C^*$-algebras associated to strongly connected row-finite $k$-graphs $\Lambda$.   We begin by giving an alternative characterization of the $\Lambda$-semibranching function systems introduced in an earlier paper, with an eye towards constructing such representations that are faithful. Our new characterization allows us to more easily check that examples satisfy certain necessary and sufficient conditions.  We present a variety of new examples relying on this characterization. We then use some of these methods and a direct limit procedure to construct a faithful separable representation for any row-finite source-free $k$-graph. 

\end{abstract}

\classification{46L05, 46L55, 46K10}

\keywords{$C^*$-algebras, representations, higher-rank graphs, $k$-graphs, $\Lambda$-semibranching function systems, Markov measures, Perron--Frobenius.}

\tableofcontents

\section{Introduction}

In  \cite{KP}, Kumjian and Pask introduced higher-rank graphs $\Lambda$ -- also known as $k$-graphs -- and their $C^*$-algebras $C^*(\Lambda)$  as
generalizations of the Cuntz and Cuntz--Krieger $C^*$-algebras associated to directed graphs (cf.~\cite{Cuntz, Cuntz-Krieger, enomoto-watatani,
kpr}).  The $C^*$-algebras of higher-rank graphs  are closely linked with orbit equivalence for shift spaces \cite{carlsen-ruiz-sims} and with symbolic dynamics more generally \cite{pask-raeburn-weaver,skalski-zacharias-houston, pask-sierakowski-sims}, as well as with fractals and self-similar structures \cite{FGJKP1, FGJKP2}. {More links between higher-rank graphs and symbolic dynamics can be seen via \cite{bezuglyi-k-m, bezuglyi-four-auth} and the references cited therein.} 
{Higher-rank graphs have also provided crucial examples \cite{ruiz-sims-sorensen} for Elliott's program \cite{elliott-AF, kirchberg-phillips,  tikuisis-white-winter} to classify $C^*$-algebras by $K$-theoretic invariants.} 

Despite this ubiquity of $k$-graph $C^*$-algebras, representations of $C^*(\Lambda)$ on separable Hilbert spaces are almost nonexistent in the literature. This motivated us to undertake 
the present detailed study of separable representations of $k$-graph $C^*$-algebras and their unitary equivalence classes.   
One of the few examples of separable representations of $C^*(\Lambda)$ was identified in \cite{FGKP}, using the notion of $\Lambda$-semibranching function systems introduced in that paper. These $\Lambda$-semibranching function systems generalize to the $k$-graph setting the semibranching function systems for Cuntz--Krieger algebras which were studied by K. Kawamura \cite{kawamura}, M. Marcolli and A. Paolucci \cite{MP}, and S. Bezuglyi and P. Jorgensen \cite{bezuglyi-jorgensen}.  Semibranching function systems, and iterated function systems more generally, also have applications to automata theory, as established in \cite{charlier}.

In  this paper, the representations associated to $\Lambda$-semibranching function systems, which are called the $\Lambda$-semibranching representations, form our jumping-off point. (See Definition~\ref{def-lambda-SBFS-1}.)
We begin in Section \ref{sec-fund-mater} with an introduction to higher-rank graphs and their $C^*$-algebras, followed by a review of the $\Lambda$-semibranching function systems introduced in \cite{FGKP}.
 We also present several results related to the Carath\'eodory/Kolmogorov Extension Theorem which we use repeatedly throughout this work.

Our first main result is Theorem~\ref{thm:SBFS-edge-defn} in Section \ref{sec:SBFS-revisited} which provides an alternative characterization of a $\Lambda$-semibranching function system, which is easier to check in examples.  We then use Theorem~\ref{thm:SBFS-edge-defn} to describe how to construct a $\Lambda$-semibranching function system on a finite $k$-graph $\Lambda$ when $\Lambda$ is given as a product graph of a $k_1$-graph $\Lambda_1$ and a $k_2$-graph $\Lambda_2$, $\Lambda=\Lambda_1\times \Lambda_2$: see Proposition~\ref{prop:product-graph-SBFS}.
Next we present a variety of examples of $\Lambda$-semibranching function systems on  measure spaces $(X, \mu)$ in   Section~\ref{sec:examples_a} (where $X$ is a Lebesgue measure space) and Section~\ref{sec:examples_gen_measure} (where $X = \Lambda^\infty$ is the infinite path space of the higher-rank graph). 
 Through careful computations of the Radon--Nikodym derivatives associated to these $\Lambda$-semibranching function systems, we analyze the relationship between their associated representations and the standard $\Lambda$-semibranching representation on $L^2(\Lambda^\infty, M)$ which was introduced in Proposition~3.4 and Theorem~3.5 of \cite{FGKP}.\footnote{The measure $M$ was introduced in Definition 8.1 of \cite{aHLRS3}.} 
In particular, for several examples of finite 2-graphs $\Lambda$, we construct product measures on $\Lambda^\infty$ which give rise to $\Lambda$-semibranching representations of $C^*(\Lambda)$ in Proposition~\ref{prop:example_exonevtwoe}. Moreover, for $x \in (0,1),$ we construct  Markov measures $\mu_x$ on $\Lambda_{2N}^\infty$ for a  family of 2-graphs $\{ \Lambda_{2N}\}$, such that for $x \not= 1/2$, $\mu_x$  is mutually singular to the Perron--Frobenius measure $M$ given in \cite{aHLRS3}. (See Proposition~\ref{prop-RN-of-Markov-2-example} and Proposition~\ref{prop:markov-2N}). Furthermore, for $x\ne x'$ we have that $\mu_x$ and $\mu_{x'}$ are mutually singular.

 In Section~\ref{sec:faithful-rep},  we move on to our second main result, which is the  construction of a faithful separable representation of the $C^*$-algebra associated to any row-finite, source-free $k$-graph in Theorem \ref{pr:sep-faith}.  By using direct limits and some techniques introduced in \cite{davidson-power-yang-dilation},  we are able to generalize Theorem  3.6 of \cite{FGKP}, which identified such a faithful separable representation   under the hypotheses that the $k$-graph was finite, strongly connected and aperiodic.
 The representation of Theorem \ref{pr:sep-faith} extends a simpler construction  given in Proposition~\ref{prop:sc-faithful} that can be used in the case where the graph is finite. The representation of Proposition~\ref{prop:sc-faithful} is initially defined on an inductive limit Hilbert space, rather than $L^2(X, \mu)$, and we show in Proposition \ref{prop:sc-faithful-SBFS} that this representation can be viewed as a $\Lambda$-semibranching representation.

While we were in the process of writing up the results presented below, 
D. Gon\c{c}alves, H.  Li,  and D. Royer posted a manuscript \cite{GLR} on the arXiv in which they introduce a definition of $\Lambda$-branching systems for more general $k$-graphs, called finitely aligned $k$-graphs.    

While there is some overlap between their work and ours, especially concerning the case of  $k$-graphs with one vertex, our work reducing the definition of $\Lambda$-semibranching function systems to a study of the elementary edges is independent of theirs, and our construction of the faithful separable representations of the higher-rank graph $C^*$-algebras is completely new.  We also hope that our focus in this paper on concrete examples of representations of finite higher-rank graph $C^*$-algebras will inspire more researchers to join us in studying these fascinating objects.

\subsection*{Acknowledgments}   	 
The authors would like to thank  Alex Kumjian and Daniel Gon\c calves  for helpful discussions.  E.G.~was partially supported by   the Deutsches Forschungsgemeinshaft via the SFB 878 ``Groups, Geometry, and Actions'' of the Universit\"at M\"unster. 	 
S.K.~was supported by Basic Science Research Program through the National Research Foundation of Korea (NRF) funded by the Ministry of Education (\#2017R1D2A1B03034697).
C.F. and J.P. were partially supported by two individual grants
from the Simons Foundation (C.F. \#523991; J.P. \#316981).
P.J. thanks his colleagues in the Math Department at the University of Colorado, for making a week-long visit there possible, for support, and for kind hospitality.

C.F. also thanks IMPAN for hospitality during her visits to IMPAN, Warsaw, Poland, where some of this work was carried out (grant \#3542/H2020/2016/2). This paper was partially supported by the grant H2020-MSCA-RISE-2015-691246-QUANTUM DYNAMICS.
\section{Foundational material}
\label{sec-fund-mater}

\subsection{Higher-rank graphs}

 We will now describe in detail higher-rank graphs and their $C^*$-algebras. 
Let $\N=\{0,1,2,\dots\}$ denote the monoid of natural numbers under addition, and let $k\in \N$ with $k\ge 1$. We write $e_1,\dots e_k$ for the standard basis vectors of $\N^k$, where $e_i$ is the vector of $\N^k$ with $1$ in the $i$-th position and $0$ everywhere else.

\begin{defn} \cite[Definition 1.1]{KP}
\label{def-higher-rank-gr}
A \emph{higher-rank graph} or \emph{$k$-graph} is a countable small category $\Lambda$ 
 with a degree functor $d:\Lambda\to \N^k$ satisfying the \emph{factorization property}: for any morphism $\lambda\in\Lambda$ and any $m, n \in \N^k$ such that  $d(\lambda)=m+n \in \N^k$,  there exist unique morphisms $\mu,\nu\in\Lambda$ such that $\lambda=\mu\nu$ and $d(\mu)=m$, $d(\nu)=n$. 
\end{defn}




We often regard $k$-graphs as a generalization of directed graphs, so we call morphisms $\lambda\in\Lambda$ \emph{paths} in $\Lambda$, and the objects (identity morphisms) are often called \emph{vertices}. For $n\in\N^k$ and vertices $v,w$ of $\Lambda$, we write
\begin{equation}
\label{eq:Lambda-n}
\Lambda^n:=\{\lambda\in\Lambda\,:\, d(\lambda)=n\}\
\end{equation}
With this notation, note that $\Lambda^0$ is the set of objects (vertices) of $\Lambda$. Occasionally, we call elements of $\Lambda^{e_i}$ (for any $i$) \emph{edges}.  
We write $r,s:\Lambda\to \Lambda^0$ for the range and source maps in $\Lambda$ respectively, and 
\[v\Lambda w:=\{\lambda\in\Lambda\,:\, r(\lambda)=v,\;s(\lambda)=w\}.\]
  Combining this with Equation \eqref{eq:Lambda-n} results in abbreviations such as 
\[ v\Lambda^n:= \{ \lambda \in \Lambda: r(\lambda) = v, \ d(\lambda) = n\}\]
which we will use throughout the paper.

For $m,n\in\N^k$, we write $m\vee n$ for the coordinatewise maximum of $m$ and $n$. Given  $\lambda,\eta\in \Lambda$, we write
\begin{equation}\label{eq:lambda_min}
\Lambda^{\operatorname{min}}(\lambda,\eta):=\{(\alpha,\beta)\in\Lambda\times\Lambda\,:\, \lambda\alpha=\eta\beta,\; d(\lambda\alpha)=d(\lambda)\vee d(\eta)\}.
\end{equation}
If $k=1$, then $\Lambda^{\operatorname{min}}(\lambda, \eta)$ will have at most one element; this need not be true in a $k$-graph if $k > 1$.

We say that a $k$-graph $\Lambda$ is \emph{finite} if $\Lambda^n$ is a finite set for all $n\in\N^k$ and say that $\Lambda$  \emph{has no sources} or \emph{is source-free} if $v\Lambda^n\ne \emptyset$ for all $v\in\Lambda^0$ and $n\in\N^k$. It is well known that this is equivalent to the condition that $v\Lambda^{e_i}\ne \emptyset$ for all $v\in \Lambda$ and all basis vectors $e_i$ of $\N^k$. 
We say that $\Lambda$ is \emph{row-finite} if $|v\Lambda^n| < \infty$ for all $v\in\Lambda^0$ and $n\in\N^k$, and we are mostly interested in finite (or row-finite) $k$-graphs in this paper; in fact all of our examples are finite $k$-graphs. 

We often visualize a $k$-graph as a (quotient of a) $k$-colored directed graph via the equivalence relation induced by the factorization rules.
To be precise, for each $1 \leq i \leq k$, we can define the \emph{$i$th vertex matrix} $A_i \in M_{\Lambda^0}(\N)$ by $A_i(v,w) = |v\Lambda^{e_i} w|$.
 Observe that the factorization rules imply that $A_iA_j=A_jA_i$ for $1\le i,j\le k$.  Indeed, given a pair of composable edges $f_1 \in v\Lambda^{e_i} z, f_2 \in z \Lambda^{e_j} w$, the factorization rule implies that since $d(f_1 f_2) = e_i + e_j = e_j + e_i$, the morphism $f_1 f_2 \in \Lambda$ can also be described uniquely as 
\[ f_1 f_2 = g_2 g_1 \;\;\text{ where } g_2 \in v\Lambda^{e_j} , \ g_1 \in  \Lambda^{e_i} w.\] 

We now describe two fundamental examples of higher-rank graphs which were first mentioned in \cite{KP}.

\begin{example}
\begin{itemize}
\item[(a)] For any directed graph $E$, let $\Lambda_E$ be the category of its finite paths. Then $\Lambda_E$ is a 1-graph with the degree functor $d:\Lambda_E\to \N$ which takes a finite path $\eta$ to its length $|\eta|$ (the number of edges making up $\eta$).
\item[(b)] For $k\ge 1$, let $\Omega_k$ be the small category with
\[
\operatorname{Obj}(\Omega_k)=\N^k,\quad \text{and}\quad \operatorname{Mor}(\Omega_k)=\{(p,q)\in \N^k\times \N^k\,:\, p\le q\}.
\]
Again, we can also view elements of $\text{Obj}(\Omega_k)$ as identity morphisms, via the map $\text{Obj}(\Omega_k) \ni p \mapsto (p, p) \in \text{Mor}(\Omega_k)$.
The range and source maps $r,s:\operatorname{Mor}(\Omega_k)\to \operatorname{Obj}(\Omega_k)$ are given by $r(p,q)=p$ and $s(p,q)=q$. If we define $d:\Omega_k\to \N^k$ by $d(p,q)=q-p$, then one can check that $\Omega_k$ is a $k$-graph with degree functor $d$.
\end{itemize}

\end{example}

\begin{defn}[\cite{KP} Definitions 2.1]
\label{def:infinite-path}
Let $\Lambda$ be a $k$-graph. An \emph{infinite path} in $\Lambda$ is a $k$-graph morphism (degree-preserving functor) $x:\Omega_k\to \Lambda$, and we write $\Lambda^\infty$ for the set of infinite paths in $\Lambda$. 
Since $\Omega_k$ has a terminal object (namely $0 \in\N^k$) but no initial object, we think of our infinite paths as having a range $r(x) : = x(0)$ but no source.
For each $m\in \N^k$, we have a shift map $\sigma^m:\Lambda^\infty \to \Lambda^\infty$ given by
\begin{equation}\label{eq:shift-map}
\sigma^m(x)(p,q)=x(p+m,q+m)
\end{equation}
for $x\in\Lambda^\infty$ and $(p,q)\in\Omega_k$.

We say that a $k$-graph $\Lambda$ is \emph{aperiodic} if for each $v\in\Lambda^0$, there exists $x\in v\Lambda^\infty$ such that for all $m\ne n\in\N^k$ we have $\sigma^m(x)\ne \sigma^n(x)$.
\end{defn}

It is well-known that the collection of cylinder sets 
\[
Z(\lambda)=\{x\in\Lambda^\infty\,:\, x(0,d(\lambda))=\lambda\},
\]
for $\lambda \in \Lambda$, form a compact open basis for a locally compact Hausdorff topology on $\Lambda^\infty$, under  reasonable hypotheses on $\Lambda$ (in particular, when $\Lambda$ is row-finite: see Section 2 of \cite{KP}). If a $k$-graph $\Lambda$ is  finite, then $\Lambda^\infty$ is compact in this topology. In fact, for a finite $k$-graph $\Lambda$, the proof of Lemma 4.1 from \cite{FGKP} establishes that the topology on $\Lambda^\infty$ (and hence the Borel $\sigma$-algebra $\mathcal{B}_o(\Lambda^\infty)$) is generated by the ``square'' cylinder sets
\[ \{ Z(\lambda): d(\lambda) = (n, \ldots, n) \text{ for some } n \in \N\}:\]
given any cylinder set $Z(\nu)$ with $d(\nu) \leq (n, \ldots, n)$, let
\[ I = \{ \lambda_i \in \Lambda: d(\nu \lambda_i)  = (n, \ldots, n) \}.\]
Then $Z(\lambda) = \bigsqcup_{\lambda_i \in I} Z(\nu \lambda_i)$ is a disjoint union of square cylinder sets.

According to Proposition 8.1 of \cite{aHLRS3}, for many finite higher-rank graphs there is a unique Borel probability measure $M$ on $\Lambda^\infty$ satisfying a certain self-similarity condition. 

\begin{defn}
We say that a $k$-graph is \emph{strongly connected} if, for all $v,w\in\Lambda^0$, $v\Lambda w\ne \emptyset$.
\end{defn}

If a $k$-graph $\Lambda$ is finite and strongly connected with vertex matrices $A_1,\dots A_k\in M_{\Lambda^0}(\N)$, then Proposition~3.1 of \cite{aHLRS3} implies that there is a unique positive vector $\kappa^\Lambda\in (0,\infty)^{\Lambda^0}$ such that $\sum_{v\in\Lambda^0}\kappa_v^{\Lambda}=1$ and for all $1\le i\le k$,
\[
A_i \kappa^{\Lambda}=\rho_i\, \kappa^{\Lambda},
\]
where $\rho_i$ denotes the spectral radius of $A_i$. The vector $\kappa^{\Lambda}$ is called the \emph{(unimodular) Perron--Frobenius eigenvector} of $\Lambda$. Then the measure $M$ on $\Lambda^\infty$ is given by
\begin{equation}
M(Z(\lambda))=(\rho(\Lambda))^{-d(\lambda)}\kappa^{\Lambda}_{s(\lambda)}\quad\text{for}\;\; \lambda\in\Lambda,
\label{eq:M}
\end{equation}
where $\rho(\Lambda)=(\rho_1,\dots \rho_k)$ and $(\rho(\Lambda))^n=\rho_1^{n_1}\dots \rho_k^{n_k}$ for $n=(n_1,\dots n_k)\in \Z^k$. We call the measure $M$ the \emph{Perron--Frobenius measure} on $\Lambda^\infty$.  Proposition 8.1 of \cite{aHLRS3} establishes that if $\mu$ is a Borel probability measure on $\Lambda^\infty$ such that 
\[
\mu(Z(\lambda))=\rho(\Lambda)^{-d(\lambda)}\mu (Z(s(\lambda)))\quad\text{for all}\;\; \lambda\in\Lambda,
\]
then $\mu = M$.

Now we introduce the $C^*$-algebra associated to a $k$-graph $\Lambda$. Here we only consider row-finite $k$-graphs with no sources. 
\begin{defn}\label{def:kgraph-algebra}
Let $\Lambda$ be a row-finite $k$-graph with no sources. A \emph{Cuntz--Krieger $\Lambda$-family} is a collection  $\{t_\lambda:\lambda\in\Lambda\}$ of partial isometries in a $C^*$-algebra satisfying
\begin{itemize}
\item[(CK1)] $\{t_v\,:\, v\in\Lambda^0\}$ is a family of mutually orthogonal projections,
\item[(CK2)] $t_\lambda t_\eta=t_{\lambda\eta}$ if $s(\lambda)=r(\eta)$,
\item[(CK3)] $t^*_\lambda t_\lambda=t_{s(\lambda)}$ for all $\lambda\in\Lambda$,
\item[(CK4)] for all $v\in\Lambda$ and $n\in\N^k$, we have
\[
t_v=\sum_{\lambda\in v\Lambda^n} t_\lambda t^*_\lambda.
\]
\end{itemize}
The Cuntz--Krieger $C^*$-algebra $C^*(\Lambda)$ associated to $\Lambda$ is the universal $C^*$-algebra generated by a Cuntz--Krieger $\Lambda$-family.
\end{defn}
One can show that 
\[
C^*(\Lambda)=\overline{\operatorname{span}}\{t_\alpha t^*_\beta\,:\, \alpha,\beta\in\Lambda,\; s(\alpha)=s(\beta)\}.
\]
Also, (CK4) implies that for all $\lambda, \eta\in \Lambda$, we have
\begin{equation}\label{eq:CK4-2}
t_\lambda^* t_\eta=\sum_{(\alpha,\beta)\in \Lambda^{\operatorname{min}}(\lambda,\eta)} t_\alpha t^*_\beta. 
\end{equation}
The universal property implies that the $C^*$-algebra $C^*(\Lambda)$ carries a strongly continuous action  $\gamma$ of the $k$-torus $\T^k$, called the \emph{gauge action}, which is given by
\[
\gamma_z(t_\lambda)=z^{d(\lambda)}t_\lambda,
\]
where $z^n=\prod_{i=1}^k z_i^{n_i}$ for $z=(z_1,\dots, z_k)\in \T^k$ and $n = (n_1, \ldots, n_k) \in\N^k$. Note that we only  discuss the gauge action in Section \ref{sec:faithful-rep}.

\subsection{$\Lambda$-semibranching function systems and their representations}

In \cite{FGKP}, separable representations of $C^*(\Lambda)$ were constructed by using $\Lambda$-semibranching function systems on measure spaces. A $\Lambda$-semibranching function system is a generalization of the semibranching function systems studied by Marcolli and Paolucci in \cite{MP}.   Here we review basic definitions and introduce the standard example of  a $\Lambda$-semibranching function system on $(\Lambda^\infty,M)$ and its associated representation: see Example~\ref{example:SBFS-M}.

\begin{defn}
\label{def-1-brach-system}\cite[Definition~2.1]{MP}\label{defn:sbfs}
Let $(X,\mu)$ be a measure space. Suppose that, for each $1\le i\le N$, we have a measurable map $\sigma_i:D_i\to X$, for some measurable subsets $D_i\subset X$. The family $\{\sigma_i\}_{i=1}^N$ is a \emph{semibranching function system} if the following holds:
\begin{itemize}
\item[(a)] Setting $R_i = \sigma_i(D_i),$ we have 
\[
\mu(X\setminus \cup_i R_i)=0,\quad\quad\mu(R_i\cap R_j)=0\;\;\text{for $i\ne j$}.
\]
\item[(b)] For each $i$, the Radon--Nikodym derivative
\[
\Phi_{\sigma_i}=\frac{d(\mu\circ\sigma_i)}{d\mu}
\]
satisfies $\Phi_{\sigma_i}>0$, $\mu$-almost everywhere on $D_i$.
\end{itemize}
A measurable map $\sigma:X\to X$ is called a \emph{coding map} for the family $\{\sigma_i\}_{i=1}^N$ if $\sigma\circ\sigma_i(x)=x$ for all $x\in D_i$.
\end{defn}

\begin{defn}\cite[Definition~3.2]{FGKP}
\label{def-lambda-SBFS-1}
Let $\Lambda$ be a finite $k$-graph and let $(X, \mu)$ be a measure space.  A \emph{$\Lambda$-semibranching function system} on $(X, \mu)$ is a collection $\{D_\lambda\}_{\lambda \in \Lambda}$ of measurable subsets of $X$, together with a family of prefixing maps $\{\tau_\lambda: D_\lambda \to X\}_{\lambda \in \Lambda}$, and a family of coding maps $\{\tau^m: X \to X\}_{m \in \N^k}$, such that
\begin{itemize}
\item[(a)] For each $m \in \N^k$, the family $\{\tau_\lambda: d(\lambda) = m\}$ is a semibranching function system, with coding map $\tau^m$.
\item[(b)] If $ v \in \Lambda^0$, then  $\tau_v = id$,  and $\mu(D_v) > 0$.
\item[(c)] Let $R_\lambda = \tau_\lambda( D_\lambda)$. For each $\lambda \in \Lambda, \nu \in s(\lambda)\Lambda$, we have $R_\nu \subseteq D_\lambda$ (up to a set of measure 0), and
\[\tau_{\lambda} \tau_\nu = \tau_{\lambda \nu}\text{ a.e.}\]
 (Note that this implies that up to a set of measure 0, $D_{\lambda \nu} = D_\nu$ whenever $s(\lambda) = r(\nu)$).
\item[(d)] The coding maps satisfy $\tau^m \circ \tau^n = \tau^{m+n}$ for any $m, n \in \N^k$.  (Note that this implies that the coding maps pairwise commute.)
\end{itemize}
\end{defn}

\begin{rmk}
\label{rmk:abs-cts-inverse}
We pause to note that  condition (c) of Definition \ref{def-lambda-SBFS-1} above implies that $D_\lambda=D_{s(\lambda)}$ and $R_\lambda\subset R_{r(\lambda)}$ for $\lambda\in\Lambda$. Also, when $\Lambda$ is a finite 1-graph, the definition of a $\Lambda$-semibranching function system is not equivalent to Definition \ref{def-1-brach-system}. In particular, Definition~\ref{def-lambda-SBFS-1}(b) implies that the domain sets $\{D_v:v\in\Lambda^0\}$ must satisfy $\mu(D_v\cap D_w)=\mu(R_v\cap R_w)=0$ for $v\ne w\in\Lambda^0$, but  Definition~\ref{def-1-brach-system} does not require that the domain sets $D_i$ be mutually disjoint $\mu$-a.e. In fact, Definition~\ref{def-lambda-SBFS-1} implies what is called condition (C-K) in Section~2.4 of \cite{bezuglyi-jorgensen}: up to a measure zero set, 
\begin{equation}
\label{eq-partition}
D_v=\cup_{\lambda\in v\Lambda^m} R_\lambda
\end{equation}
for all $v\in\Lambda^0$ and $m\in\N,$ since $R_v=\tau_v(D_v)=id(D_v)=D_v.$ Also notice that in the above decomposition the  intersections $R_\lambda \cap R_{\lambda'},$ $\lambda \not= {\lambda'},$ have measure zero. This condition  is crucial to making sense of the representation of $C^*(\Lambda)$ associated to the $\Lambda$-semibranching function system (see Theorem \ref{thm:separable-repn} below). As established in Theorem~2.22 of \cite{bezuglyi-jorgensen}, in order to obtain a representation of a 1-graph algebra $C^*(\Lambda)$ from a semibranching function system, one must also assume that the semibranching function system satisfies condition (C-K).

Finally, we also observe that $(\tau^n)^{-1} (E) = \bigcup_{\lambda \in \Lambda^n} \tau_\lambda(E)$ for any measurable $E\subseteq X$.  Therefore, 
\[ \mu \circ (\tau^n)^{-1} << \mu\]
in any $\Lambda$-semibranching function system.
\end{rmk}

As established in \cite{FGKP}, any $\Lambda$-semibranching function system gives rise to a representation of $C^*(\Lambda)$ { via \lq prefixing' and \lq chopping off' operators that satisfy the Cuntz-Krieger relations}.  {Intuitively, a $\Lambda$-semibranching function system is a way of encoding the Cuntz-Krieger relations at the measure-space level: the prefixing map $\tau_\lambda$ corresponds to the partial isometry $s_\lambda \in C^*(\Lambda)$.  We give a precise formula for the representation in Theorem \ref{thm:separable-repn} below.} For brevity, we will often refer to  representations arising from $\Lambda$-semibranching function systems as \emph{$\Lambda$-semibranching representations.}  Note that  a $\Lambda$-semibranching representation will be separable whenever $L^2(X, \mu)$ is separable; this will be the case for all but one of the representations we consider in this paper.

\begin{thm}\cite[Theorem~3.5]{FGKP}\label{thm:separable-repn}
Let $\Lambda$ be a finite $k$-graph with no sources and suppose that we have a $\Lambda$-semibranching function system on a certain measure space $(X,\mu)$ with prefixing maps $\{\tau_\lambda\}_{\lambda \in \Lambda}$ and coding maps $\{\tau^m:m\in \N^k\}$. For each $\lambda\in\Lambda$, define an operator $S_\lambda$  on $L^2(X,\mu)$ by
\[
S_\lambda\xi(x)=\chi_{R_\lambda}(x)(\Phi_{\tau_\lambda}(\tau^{d(\lambda)}(x)))^{-1/2} \xi(\tau^{d(\lambda)}(x)).
\]
Then the operators $\{S_\lambda:\lambda\in\Lambda\}$ generate a representation $\pi$ of $C^*(\Lambda)$, and $\pi$ is separable.
\label{thm:SBFS-repn}
\end{thm}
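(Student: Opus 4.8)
The plan is to verify that the operators $\{S_\lambda\}_{\lambda\in\Lambda}$ form a Cuntz--Krieger $\Lambda$-family on $L^2(X,\mu)$, i.e.\ that they satisfy (CK1)--(CK4) of Definition~\ref{def:kgraph-algebra}; the universal property of $C^*(\Lambda)$ then immediately produces a $*$-homomorphism $\pi\colon C^*(\Lambda)\to\mathcal{B}(L^2(X,\mu))$ with $\pi(t_\lambda)=S_\lambda$, which is the desired representation. The single technical tool underlying every computation is the change-of-variables formula attached to the Radon--Nikodym derivative: since $\Phi_{\tau_\lambda}=d(\mu\circ\tau_\lambda)/d\mu$ and, by Definition~\ref{def-lambda-SBFS-1}(a), $\tau^{d(\lambda)}$ restricts to a $\mu$-a.e.\ inverse of $\tau_\lambda$ carrying $R_\lambda$ back to $D_\lambda$, one has $\int_{R_\lambda}(f\circ\tau^{d(\lambda)})\,d\mu=\int_{D_\lambda}f\,\Phi_{\tau_\lambda}\,d\mu$ for every nonnegative measurable $f$. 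I would record this as a preliminary observation and use it throughout.

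First I would establish boundedness. Substituting $f=(\Phi_{\tau_\lambda})^{-1}|\xi|^2$ in the formula above to evaluate $\|S_\lambda\xi\|^2$, the factor $(\Phi_{\tau_\lambda})^{-1}$ cancels against the Jacobian $\Phi_{\tau_\lambda}$, leaving $\|S_\lambda\xi\|^2=\int_{D_\lambda}|\xi|^2\,d\mu=\|\chi_{D_\lambda}\xi\|^2\le\|\xi\|^2$. Thus each $S_\lambda$ is a well-defined contraction, in fact a partial isometry whose initial projection is multiplication by $\chi_{D_\lambda}$. The same substitution identifies the adjoint as the ``prefixing'' operator $S_\lambda^*\eta(y)=\chi_{D_\lambda}(y)\,(\Phi_{\tau_\lambda}(y))^{1/2}\,\eta(\tau_\lambda(y))$, which I would confirm by directly pairing $\langle S_\lambda\xi,\eta\rangle$ against $\langle\xi,S_\lambda^*\eta\rangle$.

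With these formulas in hand the Cuntz--Krieger relations become bookkeeping. For (CK1), Definition~\ref{def-lambda-SBFS-1}(b) gives $\tau_v=\mathrm{id}$ and $d(v)=0$, so $S_v$ is multiplication by $\chi_{D_v}$, a projection; mutual orthogonality follows from $\mu(D_v\cap D_w)=0$ for $v\neq w$ (Remark~\ref{rmk:abs-cts-inverse}). For (CK3) I would compose $S_\lambda^*S_\lambda$, using $\tau^{d(\lambda)}\circ\tau_\lambda=\mathrm{id}$ on $D_\lambda$ to collapse the two Radon--Nikodym factors and recover multiplication by $\chi_{D_\lambda}=\chi_{D_{s(\lambda)}}=S_{s(\lambda)}$. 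A symmetric computation, now using $\tau_\lambda\circ\tau^{d(\lambda)}=\mathrm{id}$ on $R_\lambda$, shows that $S_\lambda S_\lambda^*$ is multiplication by $\chi_{R_\lambda}$; summing over $\lambda\in v\Lambda^n$ and invoking the partition \eqref{eq-partition} of $D_v$ (a.e.) by the mutually $\mu$-disjoint sets $\{R_\lambda\}$ yields (CK4).

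The main obstacle, and the only relation requiring real care, is (CK2). Composing $S_\lambda S_\eta$ (for $s(\lambda)=r(\eta)$) produces three ingredients that must be matched against $S_{\lambda\eta}$: the coding maps combine via $\tau^{d(\eta)}\circ\tau^{d(\lambda)}=\tau^{d(\lambda)+d(\eta)}=\tau^{d(\lambda\eta)}$ (Definition~\ref{def-lambda-SBFS-1}(d)); the indicator functions combine because $R_{\lambda\eta}=\tau_\lambda(R_\eta)$, so that $\chi_{R_\lambda}(x)\,\chi_{R_\eta}(\tau^{d(\lambda)}(x))=\chi_{R_{\lambda\eta}}(x)$; and the Radon--Nikodym factors combine via the chain rule $\Phi_{\tau_{\lambda\eta}}=(\Phi_{\tau_\lambda}\circ\tau_\eta)\,\Phi_{\tau_\eta}$, which is the measure-theoretic content of $\tau_{\lambda\eta}=\tau_\lambda\circ\tau_\eta$ from Definition~\ref{def-lambda-SBFS-1}(c). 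Evaluating this chain rule at the point $\tau^{d(\lambda\eta)}(x)$ and simplifying $\tau_\eta(\tau^{d(\eta)}(z))=z$ on $R_\eta$ is the delicate step, since one must confirm that every identity holds $\mu$-a.e.\ and that the relevant null sets (where the semibranching axioms fail, or where the Radon--Nikodym derivatives degenerate) may be discarded without affecting the operators. Once (CK1)--(CK4) are in place, the universal property delivers $\pi$, and separability is then immediate from the definition: $\pi$ acts on $L^2(X,\mu)$, which is separable for the ($\sigma$-finite, standard) measure spaces under consideration.
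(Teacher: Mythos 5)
Your proposal is correct and takes essentially the same approach as the proof this theorem rests on (the paper imports the statement from \cite[Theorem~3.5]{FGKP} without reproducing the argument): verify that the operators $S_\lambda$ form a Cuntz--Krieger $\Lambda$-family --- partial-isometry structure and adjoint via the change-of-variables identity for $\Phi_{\tau_\lambda}$, (CK1)/(CK3)/(CK4) from the a.e.\ disjointness and partition properties of the $D_v$ and $R_\lambda$, (CK2) from the Radon--Nikodym chain rule $\Phi_{\tau_{\lambda\eta}} = (\Phi_{\tau_\lambda}\circ\tau_\eta)\,\Phi_{\tau_\eta}$ --- and then invoke the universal property of $C^*(\Lambda)$. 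Your caveat on the last claim is also the right one: separability of $\pi$ is really separability of $L^2(X,\mu)$, exactly as the paper remarks immediately before the theorem.
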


\begin{example}\label{example:SBFS-M}
Here we describe the standard $\Lambda$-semibranching function system on the measure space $(\Lambda^\infty, M)$ for a finite strongly connected $k$-graph $\Lambda$, using the measure $M$ of Equation \eqref{eq:M}. The prefixing maps $\{\sigma_\lambda:Z(s(\lambda))\to Z(\lambda)\}_{\lambda\in\Lambda}$ are given by
\begin{equation}
\label{prefixmaps}
\sigma_\lambda(x)=\lambda x,
\end{equation}
where $\lambda x \in \Lambda^\infty$ is defined by $\lambda x (0, m) = \lambda(0,m)$ if $d(\lambda ) \geq m$, and $\lambda x(0, m) = \lambda x( 0, m - d(\lambda))$ if $m \geq d(\lambda)$,
and the coding maps $\{\sigma^m:\Lambda^\infty\to \Lambda^\infty\}_{m\in \N^k}$ are given as in \eqref{eq:shift-map} of Definition \ref{def:infinite-path}.

Thus, for $\lambda\in\Lambda$, we let $D_\lambda=Z(s(\lambda))$ and $R_\lambda=\sigma_\lambda(D_\lambda)=Z(\lambda)$.  Proposition 3.4 of \cite{FGKP} establishes that $\{\sigma_\lambda:D_\lambda\to R_\lambda\}$ and $\{\sigma^m\}_{m\in \N^k}$ forms a $\Lambda$-semibranching function system on $(\Lambda^\infty,M)$. In particular, one can show that the Radon--Nikodym derivatives of $\sigma_\lambda$ are positive $M$-a.e. on $Z(s(\lambda))$ and they  are given by
\[
\Phi_{\sigma_\lambda}(x)=\rho(\Lambda)^{-d(\lambda)}.
\]

\begin{rmk}
\label{rmk:defn-standard-repn}
As seen in the above Theorem~\ref{thm:separable-repn}, there is a separable representation $\pi =: \pi_S$ of $C^*(\Lambda)$ associated to the standard $\Lambda$-semibranching function system on $(\Lambda^\infty, M)$ of Example \ref{example:SBFS-M}. In this case, $S_\lambda = \pi_S(t_\lambda)$ acts on characteristic functions of cylinder sets by
\begin{equation*}\label{eq:stand-Lambda-repn}
\begin{split}
S_\lambda \chi_{Z(\eta)}(x) &=\chi_{Z(\lambda)}(x)\rho(\Lambda)^{d(\lambda)/2}\chi_{Z(\eta)}(\sigma^{d(\lambda)}(x))\\
&=\rho(\Lambda)^{d(\lambda)/2}\chi_{Z(\lambda\eta)}(x).
\end{split}
\end{equation*}
Then the adjoint $S_\lambda^*$ is given by
\[
S_\lambda^*\chi_{Z(\eta)}(x)=\rho(\Lambda)^{-d(\lambda)/2}\sum_{(\alpha,\beta)\in \Lambda^{\operatorname{min}}(\lambda,\eta)}\chi_{Z(\alpha)}(x).
\]
 We call the separable representation $\pi_S$ associated to this $\Lambda$-semibranching function system on $(\Lambda^\infty, M)$ the \emph{standard $\Lambda$-semibranching representation} of $C^*(\Lambda)$.
\end{rmk}
\end{example}
The following Lemmas are well-known, and will be the technical tool we will use  in many of the Radon--Nikodym  derivative calculations presented in Section \ref{sec:examples_gen_measure}.	
In particular, we will apply these examples to the case where $X = \Lambda^\infty$ and 
$ \mathcal F_n$ is the $\sigma$-algebra generated by the cylinder sets $Z(\lambda)$ with $d(\lambda) = (n, \ldots, n)$.

\begin{lemma}[Kolmogorov Extension Theorem, \cite{kolmogorov, Tum}]
\label{lem-Kolm}
Let $(X, \mathcal{F}_n, \nu_n)_{n\in\N}$ be a sequence of probability measures $(\nu_n)_{n\in \N}$ on the same space $X$, each associated with a $\sigma$-algebra $\mathcal F_n$
; further assume that  $(X, \mathcal{F}_n, \nu_n)_{n\in\N}$  form a projective system, i.e., an inverse limit. Suppose that Kolmogorov's consistency condition holds:
\[ \nu_{n+1}|_{\mathcal F_n} = \nu_n.\]
Then there is a unique extension $\nu$ of the measures $(\nu_n)_{n\in \N}$ to the $\sigma$-algebra $\bigvee_{n\in \N} \mathcal F_n$ generated by $\bigcup_{n\in \N} \mathcal F_n$.
\label{lem:kolmogorov}
\end{lemma}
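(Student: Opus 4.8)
The plan is to reduce the statement to the Carath\'eodory extension theorem. Since the consistency condition $\nu_{n+1}|_{\mathcal{F}_n} = \nu_n$ presupposes $\mathcal{F}_n \subseteq \mathcal{F}_{n+1}$, the family $\{\mathcal{F}_n\}_{n \in \N}$ is an increasing filtration, and hence $\mathcal{A} := \bigcup_{n \in \N} \mathcal{F}_n$ is an algebra of subsets of $X$: it is closed under complements and finite unions, since any finite collection of its members lies in a common $\mathcal{F}_n$, though in general it is not a $\sigma$-algebra. First I would define a set function $\nu_0 \colon \mathcal{A} \to [0,1]$ by setting $\nu_0(A) = \nu_n(A)$ whenever $A \in \mathcal{F}_n$. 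The consistency condition guarantees this is well defined, for if $A \in \mathcal{F}_n$ and $m \geq n$ then $A \in \mathcal{F}_m$ and $\nu_m(A) = \nu_n(A)$ by iterating $\nu_{j+1}|_{\mathcal{F}_j} = \nu_j$. Finite additivity of $\nu_0$ is inherited directly from the additivity of each $\nu_n$, since any finite disjoint union in $\mathcal{A}$ takes place within a single $\mathcal{F}_n$.

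The heart of the argument, and the step I expect to be the main obstacle, is upgrading finite additivity of $\nu_0$ to countable additivity on $\mathcal{A}$; equivalently, I would verify continuity from above at the empty set, i.e.\ that $\nu_0(A_k) \to 0$ whenever $A_k \in \mathcal{A}$ with $A_1 \supseteq A_2 \supseteq \cdots$ and $\bigcap_k A_k = \emptyset$. In full abstraction this property can fail, and it is precisely here that one must invoke the topological structure underlying the application. In our setting of interest, $X = \Lambda^\infty$ is compact (for $\Lambda$ finite) and $\mathcal{F}_n$ is the finite $\sigma$-algebra generated by the square cylinder sets $Z(\lambda)$ with $d(\lambda) = (n, \ldots, n)$, each of which is compact \emph{and} open. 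Thus every $A_k \in \mathcal{A}$ is a finite disjoint union of such compact open sets and is itself compact; since a decreasing sequence of nonempty compact sets has nonempty intersection, $\bigcap_k A_k = \emptyset$ forces $A_k = \emptyset$ for some $k$, and continuity at $\emptyset$ holds trivially. This shows $\nu_0$ is a premeasure on $\mathcal{A}$.

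With $\nu_0$ a premeasure, the Carath\'eodory extension theorem produces a measure $\nu$ on the generated $\sigma$-algebra $\sigma(\mathcal{A}) = \bigvee_{n \in \N} \mathcal{F}_n$ extending $\nu_0$, and $\nu$ is a probability measure since $\nu_0(X) = \nu_n(X) = 1$. For uniqueness I would appeal to Dynkin's $\pi$--$\lambda$ theorem: $\mathcal{A}$ is an algebra, in particular a $\pi$-system containing $X$ and generating $\bigvee_{n} \mathcal{F}_n$; any two extensions must agree with $\nu_0$ on $\mathcal{A}$, and finite measures agreeing on such a generating $\pi$-system coincide on the full $\sigma$-algebra. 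This yields the unique extension $\nu$ claimed in the statement.
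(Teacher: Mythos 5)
Your proof is correct, and since the paper offers no proof of this lemma at all --- it is labelled as well known and delegated to the references \cite{kolmogorov, Tum} --- your argument supplies exactly what the authors leave to the literature. The route you take (the algebra $\mathcal{A}=\bigcup_{n}\mathcal{F}_n$, the finitely additive set function $\nu_0$ well defined by consistency, Carath\'eodory for existence, Dynkin's $\pi$--$\lambda$ theorem for uniqueness) is the standard one, and each step is carried out correctly. The step you singled out as the main obstacle is indeed the crux, and your handling of it is the most valuable part of the write-up: finite additivity does \emph{not} upgrade to countable additivity from the stated hypotheses alone, and in full abstraction the lemma as stated is false --- there are classical examples of consistent projective systems of probability measures admitting no countably additive extension. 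Some compactness or regularity hypothesis (a compact class in the sense of Marczewski, topological inner regularity, a standard Borel structure) is therefore being used tacitly, and it is what the phrase ``form a projective system, i.e., an inverse limit'' must be read as encoding. Your repair --- in the intended application $X=\Lambda^\infty$ is compact, every set in $\bigcup_n\mathcal{F}_n$ is a finite disjoint union of compact open square cylinder sets, so a decreasing sequence with empty intersection is eventually empty --- is exactly the argument needed for the measures constructed in Section \ref{sec:examples_gen_measure}, where the lemma is invoked. In short: a correct proof, with the one genuinely delicate point identified honestly and resolved in the setting where the paper actually uses the result.
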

 In fact, $\nu$ is the unique probability measure which has the given sequence of measures $(\nu_n)_{n\in \N}$ as its marginal distributions with respect to the prescribed filtration $\bigcup_{n\in \N} \mathcal F_n$.

\begin{lemma} (cf.~\cite{bezuglyi-jorgensen}, \cite{SGI} Section 10.2)
\label{lem-RN-der-comp-limit}
	\label{lemma-limit-RN} Let $(X, \mathcal{F}_n, \mu_n)_{n\in \N}$ and $(X, \mathcal F_n, \nu_n)_{n\in \N}$
	be two  sequences of measures on the same space $X$ and same $\sigma$-algebras $(X,{\mathcal F}_n)$.  Suppose that both sequences  form a projective system 
	and satisfy Kolmogorov's consistency condition, so that by Lemma \ref{lem:kolmogorov}, we have induced measures $\mu, \nu$ on 
the $\sigma$-algebra  $\mathcal{F} := \bigvee_n \mathcal{F}_n$  generated by $\cup_n \mathcal{F}_n$.
  
 Suppose moreover that
	\begin{itemize}
		\item $\nu_n<< \mu_n$ for all $n \in \N$;
		\item The Radon--Nikodym derivative $R_n:= d\nu_n/d\mu_n$ exists and is finite for all $n \in \N$;
		\item $R:= \lim_{n\to \infty} R_n$ exists and is finite.
			\end{itemize}
Then 	$\nu << \mu$ if and only if $R >0$, and $R = d\nu/d\mu$. 
	\end{lemma}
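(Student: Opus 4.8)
The natural framework for this statement is the martingale characterization of Radon--Nikodym derivatives, so the plan is to realize $(R_n)_{n\in\N}$ as a martingale for $\mu$ and to recover $R$ as its almost-everywhere limit. First I would record, via Lemma~\ref{lem:kolmogorov}, that the induced measures satisfy $\mu|_{\mathcal F_n}=\mu_n$ and $\nu|_{\mathcal F_n}=\nu_n$. The key computation is that $(R_n,\mathcal F_n)$ is a nonnegative $\mu$-martingale: for $A\in\mathcal F_n\subseteq\mathcal F_{n+1}$, combining $\mathcal F_{n+1}$-measurability of $R_{n+1}$, the defining property of the derivatives, and Kolmogorov consistency $\nu_{n+1}|_{\mathcal F_n}=\nu_n$ gives
\[
\int_A R_{n+1}\,d\mu=\int_A R_{n+1}\,d\mu_{n+1}=\nu_{n+1}(A)=\nu_n(A)=\int_A R_n\,d\mu_n=\int_A R_n\,d\mu,
\]
so that $\mathbb{E}_\mu[R_{n+1}\mid\mathcal F_n]=R_n$.

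Because each $R_n\ge 0$ and $\int_X R_n\,d\mu=\nu_n(X)=1$, the martingale is bounded in $L^1(\mu)$, and Doob's convergence theorem produces an $\mathcal F$-measurable limit to which $R_n$ converges $\mu$-almost everywhere; by the standing hypothesis that $\lim_n R_n=R$ exists and is finite, this limit is $R$, so $R\ge 0$, $R\in L^1(\mu)$, and Fatou's lemma gives $\int_X R\,d\mu\le 1$.

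To identify $R$ with $d\nu/d\mu$ I would introduce the finite measure $\tilde\nu$ on $(X,\mathcal F)$ defined by $\tilde\nu(A)=\int_A R\,d\mu$, which is absolutely continuous with respect to $\mu$ by construction. For $A\in\mathcal F_n$ and any $m\ge n$ one has $\int_A R_m\,d\mu=\nu_m(A)=\nu(A)$, so everything comes down to justifying the interchange $\int_A R\,d\mu=\lim_m\int_A R_m\,d\mu$. Granting it, $\tilde\nu$ and $\nu$ agree on every $\mathcal F_n$ and are probability measures, whence $\tilde\nu=\nu$ on all of $\mathcal F$ by the uniqueness assertion of Lemma~\ref{lem:kolmogorov}; this says exactly that $\nu\ll\mu$ with $R=d\nu/d\mu$. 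This interchange is the heart of the matter: it is precisely the assertion that $(R_n)$ is uniformly integrable, equivalently that $R_n\to R$ in $L^1(\mu)$, and since $\int_X R_n\,d\mu=1$ for all $n$, Scheff\'e's lemma reduces it to the single scalar identity $\int_X R\,d\mu=1$.

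The main obstacle is therefore this conservation of mass: Fatou only gives $\int_X R\,d\mu\le 1$, and one must rule out the strict inequality. In full generality $R$ is merely the density of the absolutely continuous part of $\nu$, so that $\nu\perp\mu$ exactly when $R=0$ $\mu$-a.e. and $\nu\ll\mu$ exactly when $\int_X R\,d\mu=1$; the positivity hypothesis $R>0$ is what forces the latter in the dichotomous situations to which we apply the lemma, where the product/Markov structure and a Kolmogorov $0$--$1$ law leave no intermediate singular part. Verifying that $R>0$ $\mu$-a.e. indeed precludes $\int_X R\,d\mu<1$ in that regime --- equivalently, that uniform integrability holds --- is the step I expect to require the most care.
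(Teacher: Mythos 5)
First, a point of reference: the paper contains no proof of this lemma --- it is quoted as a known result, with a ``cf.'' pointing to \cite{bezuglyi-jorgensen} and \cite{SGI}, Section 10.2 --- so your proposal can only be measured against the standard argument underlying those citations. Your martingale framework is exactly that standard argument, and the parts you carry out are correct: the computation showing $(R_n,\mathcal{F}_n)$ is a nonnegative $\mu$-martingale, the use of Doob's convergence theorem and Fatou to get $\int_X R\,d\mu\le 1$, and the reduction (via Scheff\'e) of the identification $d\nu = R\,d\mu$ to the single mass identity $\int_X R\,d\mu=1$.

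The genuine gap is that you never prove that identity, and the route you sketch for it cannot work as a proof of the lemma as stated. You propose to deduce $\int_X R\,d\mu=1$ from $R>0$ together with ``the product/Markov structure and a Kolmogorov $0$--$1$ law,'' but the lemma contains no such structural hypotheses, and without them the implication ($R>0$ $\mu$-a.e. $\Rightarrow \nu\ll\mu$) is false. Concretely, take $X=\{0,1\}^{\N}$ with $\mathcal F_n$ generated by the first $n$ coordinates, $\mu$ the fair-coin measure, and $\nu=\tfrac12\mu+\tfrac12\delta_{x_0}$ where $x_0$ is the all-zeros sequence: then $\nu_n\ll\mu_n$ for all $n$, $R_n\to\tfrac12$ $\mu$-a.e., so $R=\tfrac12>0$ $\mu$-a.e., yet $\nu\not\ll\mu$ and $\int_X R\,d\mu=\tfrac12<1$. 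What actually closes the argument is the hypothesis your proposal does not exploit, namely that $\lim_n R_n$ exists and is \emph{finite} pointwise (not merely $\mu$-a.e.). The martingale form of the Lebesgue decomposition (the theorem the paper is implicitly importing from its references; see also Durrett or Shiryaev) states that
\[
\nu(A)=\int_A R\,d\mu+\nu\bigl(A\cap\{\textstyle\limsup_n R_n=\infty\}\bigr)\qquad\text{for all }A\in\mathcal F,
\]
i.e.\ the singular part of $\nu$ is carried exactly by the set where $R_n$ blows up. Finiteness of the limit everywhere (equivalently, $\nu$-a.e.) makes that set $\nu$-null, which yields $\nu\ll\mu$ and $R=d\nu/d\mu$ with no uniform-integrability argument at all; positivity of $R$ is then what matters for the converse direction of the stated equivalence, and --- as your own closing remarks half-recognize --- the literal ``if and only if'' is accurate only in the dichotomous settings (product and Markov measures) in which the paper applies the lemma. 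In short: your setup is the right one, but the step that constitutes the actual content of the lemma is missing, and the ingredient you flag as the likely fix (positivity plus a $0$--$1$ law) is not available from the stated hypotheses, whereas the everywhere-finiteness of $R$ is, and it is the ingredient that works.
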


\section{$\Lambda$-semibranching function systems}    
     
\subsection{A new way to construct $\Lambda$-semibranching function systems}
\label{sec:SBFS-revisited}

In order  to construct examples of $\Lambda$-semibranching function systems for a finite $k$-graph $\Lambda$ more readily, we will show that the original definition of $\Lambda$-semibranching function system from \cite{FGKP} can be derived using a procedure that  only involves the $k$-colored edges of $\Lambda$. We present in Theorem~\ref{thm:SBFS-edge-defn} a definition of $\Lambda$-semibranching function systems equivalent to the original definition (Definition \ref{def-lambda-SBFS-1} above). Moreover we give a construction of $\Lambda$-semibranching function system for a product graph $\Lambda$ in Proposition~\ref{prop:product-graph-SBFS}.

The following theorem shows that checking Conditions (a) and (c)  of Definition~\ref{def-lambda-SBFS-1} for arbitrary  $m\in\N^k$ is equivalent to checking the equivalent conditions for the basis elements $e_1,\dots,e_k$ of $\N^k$.

\begin{thm}\label{thm:SBFS-edge-defn}
Let $\Lambda$ be a finite $k$-graph and let $(X,\mathcal{F}, \mu)$ be a measure space. Let $\{e_1,\dots, e_k\}$ be the standard basis of $\N^k$.
For  $1 \leq i \leq k$, suppose we have a semibranching function system $\{ \tau_\lambda: D_\lambda \to R_\lambda \}_{d(\lambda) = e_i}$ on $X$, with associated coding maps 
 $\tau^{e_i}:X\to X$. 
For $\eta \in \Lambda$, write $\eta = \eta_1 \eta_2 \cdots \eta_\ell$ as a sequence of edges, and define
\begin{equation}
\label{eq:tau-lambda-edge-defn}
\tau_\eta := \tau_{\eta_1} \circ \tau_{\eta_2} \circ \cdots \circ \tau_{\eta_\ell}.
\end{equation}
 Then the semibranching function systems $\{\tau_\lambda:d(\lambda) = e_i\}_{i=1}^k$ and coding maps $\{ \tau^{e_i}\}_{i=1}^k$ satisfy Conditions (i) - (v) below if and only if the operators $\{\tau_\eta: \eta \in \Lambda\}$ form a $\Lambda$-semibranching function system, 
   with coding maps $\tau^m:= (\tau^{e_1})^{m_1} \circ (\tau^{e_2})^{m_2} \circ \cdots \circ (\tau^{e_k})^{m_k}$ for $m = (m_1, \ldots, m_k) \in \N^k$.
\begin{itemize}
\item[(i)] 
 For any edges $\lambda, \nu$ with $s(\lambda) = s(\nu) $, we have $D_\lambda = D_\nu$.  Writing $v= s(\lambda) = s(\nu)$, we set 
 \[D_v := D_\lambda = D_\nu,\]
 and we require  $\mu(D_v) > 0$ for all $v \in \Lambda^0$.
\item[(ii)]  
For $v\ne w\in\Lambda^0$, $\mu(D_v\cap D_w)=0$. 
\item[(iii)] Fix $i,j\in \{1,\dots, k\}$. If $\lambda\alpha=\nu\beta$ for $\lambda,\beta\in \Lambda^{e_i}$ and $\nu,\alpha\in \Lambda^{e_j}$, then $R_\alpha\subset D_\lambda$, $R_\beta\subset D_\nu$, and
    \[
    \tau_\lambda\circ \tau_\alpha=\tau_\nu\circ \tau_\beta.
    \]   
\item[(iv)]
For any $1 \leq i, j \leq k$, we have 
$\tau^{e_i} \circ \tau^{e_j} = \tau^{e_j} \circ \tau^{e_i}$.
\item[(v)] For $v\in \Lambda$ and $1\le i\le k$, we have
\[
\mu(D_v\setminus \cup_{g\in v\Lambda^{e_i}} R_{g})=0.
\]
\end{itemize}
\end{thm}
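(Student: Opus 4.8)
The plan is to prove the two directions of the equivalence separately, with the forward direction (Conditions (i)--(v) imply a $\Lambda$-semibranching function system) being the substantive one. Throughout, the key technical engine is the factorization property: every $\eta \in \Lambda$ with $d(\eta) = m$ has a well-defined decomposition into edges $\eta = \eta_1 \cdots \eta_\ell$ (with $\ell = |m|_1$), and the associativity of composition together with the commuting-square relations of Condition (iii) should guarantee that the formula \eqref{eq:tau-lambda-edge-defn} for $\tau_\eta$ is \emph{independent} of which edge-decomposition we choose. Establishing this well-definedness is the conceptual heart of the forward direction: any two edge-factorizations of $\eta$ differ by a sequence of adjacent transpositions of commuting edges, and Condition (iii) is exactly the statement that swapping two adjacent edges of different colors does not change the composite prefixing map. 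I expect this \textbf{well-definedness of $\tau_\eta$} to be the main obstacle, since it requires an induction on path length together with a careful bookkeeping argument that any two reduced factorizations are connected by such transpositions.

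Granting well-definedness, I would then verify the four conditions of Definition~\ref{def-lambda-SBFS-1} in turn. For Condition (a), I must show that for each fixed $m \in \N^k$ the family $\{\tau_\lambda : d(\lambda) = m\}$ is a semibranching function system with coding map $\tau^m$; this is the place where I would induct on $|m|_1$, using the single-edge semibranching hypotheses as the base case and propagating the Radon--Nikodym positivity, the almost-everywhere covering $\mu(X \setminus \cup R_\lambda) = 0$, and the almost-disjointness $\mu(R_\lambda \cap R_{\lambda'}) = 0$ through one edge at a time. Condition (v) supplies the covering property at the level of single edges (the partition $D_v = \cup_{g \in v\Lambda^{e_i}} R_g$ up to measure zero), and chaining these partitions along a path gives the covering for general $m$; the coding-map identity $\tau^m \circ \tau_\eta = \id$ on $D_\eta$ follows by peeling off edges and using that each $\tau^{e_i}$ codes its single-edge system. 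Condition (b) is immediate from Condition (i) (setting $\tau_v = \id$, $\mu(D_v) > 0$) together with Condition (ii) ensuring the vertex domains are essentially disjoint. Condition (c), the prefixing-composition law $\tau_{\lambda \nu} = \tau_\lambda \tau_\nu$, is essentially built into the definition \eqref{eq:tau-lambda-edge-defn} once well-definedness is known, since concatenating the edge-factorizations of $\lambda$ and $\nu$ yields a valid edge-factorization of $\lambda\nu$; the containment $R_\nu \subseteq D_\lambda$ up to measure zero is obtained by inducting on $d(\lambda)$ from the single-edge containments in Condition (iii). Finally Condition (d), $\tau^m \circ \tau^n = \tau^{m+n}$, reduces to the pairwise commutativity of the $\tau^{e_i}$ in Condition (iv).

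For the converse direction, I would assume that $\{\tau_\eta\}_{\eta \in \Lambda}$ is a $\Lambda$-semibranching function system with the stated coding maps and simply read off Conditions (i)--(v) as special cases. Conditions (i) and (ii) follow from Definition~\ref{def-lambda-SBFS-1}(b) together with the observation recorded in Remark~\ref{rmk:abs-cts-inverse} that $D_\lambda = D_{s(\lambda)}$ and that the vertex domains are mutually essentially disjoint. Condition (iii) is the specialization of Definition~\ref{def-lambda-SBFS-1}(c) to the commuting squares $\lambda\alpha = \nu\beta$ coming from the factorization rule $A_i A_j = A_j A_i$, using $\tau_{\lambda}\tau_{\alpha} = \tau_{\lambda\alpha} = \tau_{\nu\beta} = \tau_{\nu}\tau_{\beta}$. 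Condition (iv) is the $m = e_i$, $n = e_j$ case of Definition~\ref{def-lambda-SBFS-1}(d), and Condition (v) is the covering axiom of the semibranching function system $\{\tau_\lambda : d(\lambda) = e_i\}$ restricted to $R_v = D_v$, exactly the partition \eqref{eq-partition} of Remark~\ref{rmk:abs-cts-inverse} with $m = e_i$. This direction is routine once the definitions are unwound, so the entire weight of the theorem rests on the well-definedness argument and the inductive propagation of the semibranching axioms in the forward direction.
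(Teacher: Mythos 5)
Your proposal is correct and takes essentially the same route as the paper's own proof: the converse direction by specializing the axioms of Definition~\ref{def-lambda-SBFS-1} to edges, and the forward direction by establishing well-definedness of $\tau_\eta$ from Condition (iii) together with the factorization property, then inducting on $|m|$ to propagate the semibranching axioms (a.e.\ covering, essential disjointness of ranges, positivity of Radon--Nikodym derivatives) one edge at a time. The only difference is one of emphasis: the paper dispatches well-definedness in a single sentence and devotes most of its effort to the measure-theoretic inductive step your plan treats as routine propagation --- in particular the case $\lambda_1=\nu_1$, $\lambda_2\neq\nu_2$ of range-disjointness (which needs that $\tau_{\lambda_1}$ maps null sets to null sets, via $\mu\circ\tau_{\lambda_1}\ll\mu$) and the change-of-variables computation giving $\Phi_{\lambda_1\lambda_2}=\bigl(\Phi_{\tau_{\lambda_1}}\circ\tau_{\lambda_2}\bigr)\,\Phi_{\tau_{\lambda_2}}$.
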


\begin{proof}
First, suppose we are given a $\Lambda$-semibranching function system as in Definition \ref{def-lambda-SBFS-1}.  Condition (c) of Definition \ref{def-lambda-SBFS-1} guarantees Conditions (i) and (iii) in the statement of this Theorem; Condition (ii) follows from Condition (b) and the fact that the maps $\{\tau_v: v \in \Lambda^0\}$ form a semibranching function system.  Condition (d) of Definition \ref{def-lambda-SBFS-1} implies Condition (iv) above. 
To see (v), fix $i\in \{1,\dots,k\}$ and note that Condition (c) of Definition \ref{def-lambda-SBFS-1}  implies that for $g\in \Lambda^{e_i}$, $R_g\subseteq D_{r(g)}$. Thus, $\cup_{g\in v\Lambda^{e_i}} R_g\subseteq D_v$, and hence $\mu(D_v\setminus \cup_{g\in v\Lambda^{e_i}} R_g)=0$.

For the other direction, suppose that we are given $k$ semibranching function systems $\{\tau_\lambda:  \lambda \in \Lambda, d(\lambda) = e_i\}_{i=1}^k$ with coding maps $\{\tau^{e_i}\}_{i=1}^k$ 
satisfying Conditions (i) - (v) above.  
First fix $\eta\in\Lambda$ and write $\eta=\eta_1\eta_2\dots \eta_{\ell}$ as a sequence of edges. Then Condition (iii) implies that $R_{\eta_j}\subseteq D_{\eta_{j-1}}$ for $2\le j\le \ell$, and hence the formula for $\tau_\eta$ given in \eqref{eq:tau-lambda-edge-defn} is well-defined.\footnote{Note that if $\lambda = \lambda_1 \lambda_2$ with $d(\lambda_1) = \ell, \ d(\lambda_2) = e_j$, then  $R_{\lambda_2} \subseteq D_{s(\lambda_1)}$ by Conditions (iii) and (i), and hence   the composition $\tau_{\lambda_1} \circ \tau_{\lambda_2}$ is well defined.}  
In fact, 
Condition (iii) and the factorization property of $k$-graphs imply that $\tau_\eta$ is independent of the decomposition of $\eta$ into edges.
Moreover, recall that since each $\{\tau_\lambda: d(\lambda) = e_i\}$ 
  is a semibranching function system, we have $\tau^{e_i} \circ \tau_\lambda = id_{D_\lambda}$ for all $\lambda \in \Lambda^{e_i}$. 
  Consequently, if $\eta \in \Lambda^m$, write $\eta$ as a sequence of edges,  $\eta = \eta_1 \eta_2 \cdots \eta_\ell$ where we list the $m_k$ edges of color $k$ first, then all $m_{k-1}$ edges of color $k-1$, etc.   Also note that $id_{D_\alpha}\circ\tau_\beta$ is well defined for edges $\alpha, \beta$ whenever $s(\alpha)=r(\beta)$, and $id_{D_\alpha}\circ\tau_\beta=\tau_\beta$. Then
\[\tau^m \circ \tau_\eta = (\tau^{e_1})^{m_1} \circ \cdots \circ (\tau^{e_k})^{m_k} \circ \tau_{\eta_1} \circ \cdots \circ \tau_{\eta_\ell} = id_{D_\lambda},\]
since $\tau_k^{m_k} \circ \tau_{\lambda_1} \circ \cdots \circ \tau_{\lambda_{m_k}} = id_{D_{\lambda_{m_k}}}$, and similarly for the other colors.
Hence, $\tau^m$ is a coding map for $\{\tau_\lambda:d(\lambda)=m\}$.

To see that $\{\tau_\lambda: d(\lambda) = m\}$ forms a semibranching function system for each $m \in \N^k$, we proceed by induction.  Note that the case 
$m=e_i$ for $1\le i\le k$ holds by the hypotheses of the Theorem.  For the case  $m = 0$, we begin by defining 
\[\tau_v = id: D_v \to D_v \ \text{for}\;\;  v \in \Lambda^0.\]
Then $\Phi_v(x)  := \frac{d(\mu \circ \tau_v)}{d\mu} (x)= 1$ for all  $x \in D_v$.  By Condition (ii), in order to check that $\{\tau_v: v \in \Lambda^0\}$ is a semibranching function system, it merely remains to check that $\mu(X \backslash \cup_{v\in \Lambda^0} D_v) = 0$.  By Conditions (ii) and (v), and the fact that $\{\tau_{\lambda}: d(\lambda) = e_i\}$ is a semibranching function system,
\begin{align*}
\mu\left(\bigcup_{v\in \Lambda^0} D_v \right) &= \sum_{v\in \Lambda^0} \mu(D_v) = \sum_{v\in \Lambda^0} \sum_{\lambda \in v\Lambda^{e_i}} \mu(R_\lambda) = \mu(X)
\end{align*}
as desired.

Now, suppose that for every $\ell=(\ell_1,\dots,\ell_k) \in \N^k$ with $|\ell |=\ell_1+\ell_2+\dots +\ell_k \leq n$, we have $\{\tau_\lambda: d(\lambda) = \ell\}$ is a semibranching function system with coding map $\tau^{\ell}$.  Let $m = \ell + e_j$.  Given $\lambda \not= \nu \in \Lambda^m$, write $\lambda = \lambda_1 \lambda_2 , \ \nu = \nu_1 \nu_2$, with $d(\lambda_1) = \ell = d(\nu_1)$ and $d(\nu_2) = d(\lambda_2) = e_j$. 
Then  $\tau_\lambda=\tau_{\lambda_1}\circ \tau_{\lambda_2}$ is well-defined and 
\[R_\lambda := \tau_{\lambda_1}(R_{\lambda_2}) \subseteq R_{\lambda_1}.\]  If $\nu_1 \not= \lambda_1$, then $R_{\lambda} \cap R_\nu \subseteq R_{\lambda_1} \cap R_{\nu_1}$ and hence 
\[\mu(R_\lambda \cap R_\nu) \leq \mu( R_{\lambda_1} \cap R_{\nu_1}) = 0.\]
If $\nu_1 = \lambda_1$, then since $\lambda \not= \nu$ we must have that $\lambda_2 \not= \nu_2$.  Thus, since $\Phi_{\lambda_1} = \frac{d(\mu \circ \tau_{\lambda_1})}{d\mu}$ and {$\mu(R_{\lambda_2}\cap R_{\nu_2})=0$}, we have
\begin{align*}
\mu(R_\lambda \cap R_\nu)=   \mu(\tau_{\lambda_1}( R_{\lambda_2} \cap R_{\nu_2})) &  =  \int_{R_{\lambda_2} \cap R_{\nu_2}} 1 \, d(\mu \circ \tau_{\lambda_1}) = \int_{R_{\lambda_2} \cap R_{\nu_2}} 
\Phi_{\lambda_1}\, d\mu = 0.
\end{align*}

To see that $\mu(X \backslash \cup_{\lambda \in \Lambda^m} R_\lambda) = 0$, note that
\[
\bigcup_{\lambda \in \Lambda^m} R_\lambda = \bigcup_{\lambda = \lambda_1 \lambda_2 \in \Lambda^m} \tau_{\lambda_1} (R_{\lambda_2}) = \bigcup_{d(\lambda_1) = \ell} \tau_{\lambda_1}\left( \cup_{\lambda_2 \in s(\lambda_1) \Lambda^{e_j}} R_{\lambda_2} \right) \\
\]
Then Condition (i) and (v) gives 
\[\begin{split}
\bigcup_{d(\lambda_1) = \ell} \tau_{\lambda_1}\left( \cup_{\lambda_2 \in s(\lambda_1) \Lambda^{e_j}} R_{\lambda_2} \right) 
&= \bigcup_{d(\lambda_1) = \ell} \tau_{\lambda_1} (D_{s(\lambda_1)}) \ \text{ almost everywhere} \\
&= \bigcup_{d(\lambda_1) = \ell} R_{\lambda_1} = X \ \text{ almost everywhere.}
\end{split}\]
Thus, $\mu(X \backslash \cup_{\lambda \in \Lambda^m} R_\lambda) = 0$. 

To conclude that $\{\tau_\lambda: d(\lambda) = m\}$ is a semibranching function system,
 we need to show that it satisfies Condition (b) of Definition~\ref{def-1-brach-system}, which states that the Radon--Nikodym derivative $\Phi_\lambda := \Phi_{\tau_{\lambda_1}\circ \tau_{\lambda_2}}$ exists and is positive for all $\lambda = \lambda_1 \lambda_2$ with $d(\lambda_1)=\ell$, $d(\lambda_2)=e_j$. Since $\mu\circ \tau_{\lambda_1} <<\mu$ and $\mu\circ \tau_{\lambda_2}<<\mu$, it is straightforward to see that $\mu\circ \tau_{\lambda_1}\circ \tau_{\lambda_2} << \mu\circ \tau_{\lambda_2}$.
Now we fix a Borel set $E\subset D_{\lambda_2}$, otherwise the following integral is zero, 
and consider 
\[
\int_{X}\chi_E(x)\, d(\mu\circ \tau_{\lambda_1}\circ \tau_{\lambda_2}).
\]
Since $E \subset D_{\lambda_2}$, if $x \in E$ then $\tau_{\lambda_2}(x) =: y \in R_{\lambda_2}$, and so (since $\tau^{e_j} \circ \tau_{\lambda_2} = id_{D_{\lambda_2}}$) we see that we can write every $x \in E$ as  $x = \tau^{e_j}(y)$ for precisely one $y \in R_{\lambda_2}$.  Moreover, the fact that 
$\tau_{\lambda_2} = \tau_{\lambda_2} \circ \tau^{e_j} \circ \tau_{\lambda_2}$ implies that 
 $\tau_{\lambda_2} \circ \tau^{e_j} = id_{R_{\lambda_2}}$, so
\[\begin{split}
\int_{X}\chi_E(x)\, d(\mu\circ \tau_{\lambda_1}\circ \tau_{\lambda_2})(x)
&=\int_X \chi_E(\tau^{e_j}(y))\, d(\mu\circ \tau_{\lambda_1}\circ \tau_{\lambda_2})(\tau^{e_j}(y))\\
&=\int_X(\chi_E\circ \tau^{e_j})(y)\, d(\mu\circ \tau_{\lambda_1})(y).
\end{split}\]
Since $\mu_\circ \tau_{\lambda_1} << \mu$, the above integral becomes
\[
\int_X(\chi_E\circ \tau^{e_j}(y) \Phi_{\tau_{\lambda_1}}(y)\, d\mu(y).
\]
Returning to our original notation, write $y=\tau_{\lambda_2}(x)$ for some $x\in E\subset D_{\lambda_2}$; now we have
\[\begin{split}
\int_X(\chi_E\circ \tau^{e_j})(y) \Phi_{\tau_{\lambda_1}}(y)\, d\mu(y)
&=\int_X (\chi_E\circ \tau^{e_j})(\tau_{\lambda_2}(x)) \Phi_{\tau_{\lambda_1}}(\tau_{\lambda_2}(x))\, d\mu (\tau_{\lambda_2}(x))\\
&=\int_X \chi_E(x) (\Phi_{\tau_{\lambda_1}}\circ \tau_{\lambda_2})(x)\, d(\mu\circ \tau_{\lambda_2})(x).
\end{split}\]
So we have
\[
\int_{X}\chi_E(x)\, d(\mu\circ \tau_{\lambda_1}\circ \tau_{\lambda_2})
=\int_X \chi_E(x) (\Phi_{\tau_{\lambda_1}}\circ \tau_{\lambda_2})(x)\, d(\mu\circ \tau_{\lambda_2})(x).
\]
Thus, by uniqueness of Radon--Nikodym derivatives and the fact that $ \mu \circ \tau_{\lambda_1} \circ \tau_{\lambda_2} << \mu \circ \tau_{\lambda_2}$, we have   
\[
\frac{d(\mu\circ \tau_{\lambda_1}\circ\tau_{\lambda_2})}{d(\mu\circ \tau_{\lambda_2})}=\Phi_{\tau_{\lambda_1}}\circ \tau_{\lambda_2}.
\]
Therefore $\Phi_\lambda := \Phi_{\tau_{\lambda_1}\circ \tau_{\lambda_2}}$ exists and
\[
\Phi_\lambda = \Phi_{\tau_{\lambda_1}\circ \tau_{\lambda_2}}=\frac{d(\mu\circ \tau_{\lambda_1}\circ\tau_{\lambda_2})}{d(\mu\circ \tau_{\lambda_2})}\, \frac{d(\mu\circ \tau_{\lambda_2})}{d\mu}=(\Phi_{\tau_{\lambda_1}}\circ \tau_{\lambda_2})(\Phi_{\tau_{\lambda_1}}),
\]
which is positive since $\Phi_{\tau_{\lambda_1}}$ and $\Phi_{\tau_{\lambda_2}}$ are positive. Hence $\{\tau_{\lambda}:d(\lambda)=\ell + e_j\}$ forms a semibranching function system.
Therefore by induction $\{\tau_\lambda: d(\lambda) =m\}$ forms a semibranching function system for all $m \in \N^k$. This completes the proof that Condition (a) holds.   Note that Condition (b) holds by construction and by Condition (i); Condition (c) holds by construction, Condition (v), and the fact that $\tau_\lambda$ is well defined.  Similarly, Condition (d) holds by construction and by Condition (iv), completing the proof of the Theorem.
\end{proof}

\begin{cor}\label{cor:repn_S}
Let $\Lambda$ be a finite $k$-graph with no sources and let $(X,\mathcal{F}, \mu)$ be a measure space. 
For  each $1 \leq i \leq k$, suppose we have a semibranching function system  $\{\tau_f: D_f\to R_f\}_{d(f)=e_i}$ on $(X,\mu)$ with associated coding map  $\tau^{e_i}:X\to X$ satisfying Conditions (i)--(v) in  Theorem  \ref{thm:SBFS-edge-defn}. For each $1\le i \le k$ and $f\in \Lambda^{e_i}$, define $S_f\in B(L^2(X,\mu))$ by
\begin{equation}\label{eq:standard_S_f}
S_f \xi(x)=\chi_{R_f}(x)(\Phi_{\tau_f}(\tau^{e_i}(x))^{-1/2}\xi(\tau^{e_i}(x)).
\end{equation}
Then the collection of operators $\{S_f: d(f)=e_i\}_{i=1}^k$ generate a representation of $C^*(\Lambda)$.
\end{cor}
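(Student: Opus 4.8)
The plan is to obtain this corollary as an essentially immediate consequence of Theorem~\ref{thm:SBFS-edge-defn} combined with Theorem~\ref{thm:separable-repn}, since the substantive analytic work has already been done. First I would invoke Theorem~\ref{thm:SBFS-edge-defn}: by hypothesis the edge-level semibranching function systems $\{\tau_f : d(f) = e_i\}_{i=1}^k$ together with their coding maps $\{\tau^{e_i}\}_{i=1}^k$ satisfy Conditions (i)--(v), and the theorem therefore guarantees that the operators $\{\tau_\eta : \eta \in \Lambda\}$ defined by \eqref{eq:tau-lambda-edge-defn} form a genuine $\Lambda$-semibranching function system on $(X,\mu)$, with coding maps $\tau^m = (\tau^{e_1})^{m_1} \circ \cdots \circ (\tau^{e_k})^{m_k}$ for $m = (m_1,\dots,m_k) \in \N^k$.

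Next I would apply Theorem~\ref{thm:separable-repn} to this $\Lambda$-semibranching function system. That theorem produces a representation $\pi$ of $C^*(\Lambda)$ on $L^2(X,\mu)$ in which $\pi(t_\lambda) = S_\lambda$ is given by the general formula
\[ S_\lambda \xi(x) = \chi_{R_\lambda}(x) \left( \Phi_{\tau_\lambda}\bigl(\tau^{d(\lambda)}(x)\bigr) \right)^{-1/2} \xi\bigl(\tau^{d(\lambda)}(x)\bigr). \]
Specializing to an edge $f$ with $d(f) = e_i$, so that $\tau^{d(f)} = \tau^{e_i}$, this reduces verbatim to the defining formula \eqref{eq:standard_S_f} for $S_f$. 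Thus the operators $S_f$ appearing in the corollary are precisely the images $\pi(t_f)$ of the edge partial isometries; in particular they are bounded, as the operators supplied by Theorem~\ref{thm:separable-repn} are.

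Finally I would check that these edge operators generate all of $\pi(C^*(\Lambda))$. Recall that $C^*(\Lambda) = \overline{\operatorname{span}}\{ t_\alpha t_\beta^* : s(\alpha) = s(\beta)\}$, and every path $\lambda$ factors as a product of edges $\lambda = \eta_1 \cdots \eta_\ell$ via (CK2), so $\pi(t_\lambda) = S_{\eta_1} \cdots S_{\eta_\ell}$. Moreover each vertex projection is recovered from edges by (CK4) as $t_v = \sum_{f \in v\Lambda^{e_i}} t_f t_f^*$, a finite and nonempty sum precisely because $\Lambda$ is finite and source-free. Hence the $C^*$-algebra generated by $\{S_f : d(f) = e_i\}_{i=1}^k$ is exactly $\pi(C^*(\Lambda))$, which is the assertion of the corollary. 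I do not anticipate a genuine obstacle: the one point meriting care is this last generation argument, where the source-free hypothesis is used to express the vertex projections in terms of the edge partial isometries, ensuring that restricting attention to edges loses no information.
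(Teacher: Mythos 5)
Your proposal is correct and follows essentially the same route as the paper's proof: both invoke Theorem~\ref{thm:SBFS-edge-defn} to obtain a $\Lambda$-semibranching function system and then apply Theorem~\ref{thm:separable-repn} (Theorem~3.5 of \cite{FGKP}), observing that the general formula for $S_\lambda$ specializes to \eqref{eq:standard_S_f} on edges and that products of edge operators recover all the $S_\lambda$ via (CK2). Your explicit recovery of the vertex projections via (CK4), using finiteness and the source-free hypothesis, is a point the paper leaves implicit (it instead emphasizes that $S_\lambda$ is independent of the chosen factorization into edges, which in your formulation is automatic since $S_\lambda := \pi(t_\lambda)$ comes from the already-constructed representation).
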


\begin{proof}
By Theorem \ref{thm:SBFS-edge-defn}, we obtain a $\Lambda$-semibranching function system on $(X,\mu)$, and thus by Theorem 3.5 of \cite{FGKP}, we have an associated representation of $C^*(\Lambda)$ on $L^2(X, \mu)$.  When we evaluate the formula from \cite{FGKP} Theorem 3.5 on paths $f \in \Lambda$  with $|d(f)| = 1$ we obtain the formula for $S_f$ given in the statement of the Corollary.  Moreover, using (CK2), we can compute $S_\lambda$ for any $\lambda \in \Lambda$ once we know the formulas for $\{S_f: f\in \Lambda, |d(f)| = 1\}$.  The fact that the operators $S_\lambda$ arise from the $\Lambda$-semibranching function system induced by $\{ \{\tau_f: D_f\to R_f\}_{d(f)=e_i}\}_{i=1}^k$ guarantees the necessary commutativity properties to ensure that $S_\lambda$ is well defined.  Namely, suppose $\lambda = f_1 f_2 = g_2 g_1$ for $f_i, g_i$ edges of degree $e_i$ in $\Lambda$.  Then Theorem 3.5 of \cite{FGKP} tells us that   $ S_\lambda = S_{f_1} \circ S_{f_2} = S_{g_2} \circ S_{g_1}$, so writing $S_\lambda$ as a composition of operators $S_f$ for an edge $f$ gives the same formula as in \cite{FGKP}, and moreover is independent of the choice of factorization of $\lambda$ into edges.
\end{proof}

Now we describe how to construct a $\Lambda$-semibranching function system when $\Lambda$ is given as a product graph as follows. 

\begin{defn}(See \cite[Proposition~1.8]{KP} and \cite[Proposition~5.1]{Kang-Pask})
Let $(\Lambda_1, d_1)$ and $(\Lambda_2, d_2)$ be $k_1$- and $k_2$-graphs respectively. We  define the \emph{product graph}  $(\Lambda_1\times\Lambda_2, d_1\times d_2)$ to consist of the product category $\Lambda_1 \times \Lambda_2$,  with degree map $d_1\times d_2 : \Lambda_1\times \Lambda_2\to \N^{k_1+k_2}$ given by $d_1\times d_2(\lambda_1,\lambda_2)=(d(\lambda_1),d(\lambda_2))\in \N^{k_1}\times \N^{k_2}$ for $\lambda_1\in\Lambda_1$ and $\lambda_2\in \Lambda_2$.
\end{defn}
According to Proposition~1.8 of \cite{KP}, the product graph $\Lambda_1\times \Lambda_2$ in the above definition is a $(k_1+k_2)$-graph, and  the associated $C^*$-algebra is given by 
\[C^*(\Lambda_1\times\Lambda_2)\cong C^*(\Lambda_1)\otimes C^*(\Lambda_2)\] by Corollary~3.5 of \cite{KP}. 
Also Theorem~5.3 of \cite{Kang-Pask} implies that $\Lambda_1\times\Lambda_2$ is a finite $(k_1+k_2)$-graph with no sources if and only if $\Lambda_i$ is a finite $k_i$-graph with no sources for $i=1,2$.  

Notice that $(\Lambda_1 \times \Lambda_2)^0 = \Lambda_1^0 \times \Lambda_2^0$. Moreover,
paths in $\Lambda_1\times \Lambda_2$ of degree $e_i\in \N^{k_1}$ for the basis vector $e_i\in \N^{k_1}$ can be described as follows.  
We fix $v_1, w_1\in \Lambda_1^0$ and  $v_2, w_2\in \Lambda_2^0$, and we write $e_i$ as $(e_i,0)\in \N^{k_1}\times \N^{k_2}$. Then the paths of degree $e_i$ with  range $(v_1, v_2)$ and  source $(w_1, w_2)$ are given by
\[(v_1, v_2)( \Lambda_1 \times \Lambda_2)^{(e_i, 0)} (w_1, w_2) = \begin{cases} \emptyset, & w_2 \not= v_2 \\ v_1 \Lambda_1^{e_i} w_1 , & w_2 = v_2. \end{cases} \]

Similarly,  if $e_j$ is a basis vector for $\N^{k_2}$, then  
\[(v_1, v_2)( \Lambda_1 \times \Lambda_2)^{(0,e_j)} (w_1, w_2) = \begin{cases} \emptyset, & w_1 \not= v_1 \\ v_2 \Lambda_2^{e_j} w_2 , & w_1= v_1. \end{cases} \]
 Thus, if we choose an ordering of the vertices of $\Lambda_i$ for $i= 1, 2$ and then list the vertices of $\Lambda_1 \times \Lambda_2$ lexicographically, the vertex matrices $A_i$ of $\Lambda_1 \times \Lambda_2$ are given by 
 \[A_i = M_i \otimes I_{k_2}, \ 1 \leq i \leq k_1; \qquad A_{k_1 + j} = I_{k_1} \otimes N_j, \ 1 \leq j \leq k_2,\]
 where $\{M_i\}_{i=1}^{k_1}$ are the vertex matrices for $\Lambda_1$ and $\{N_j\}_{j=1}^{k_2}$ are the vertex matrices for $\Lambda_2$.
 
Since the product graph $\Lambda_1\times \Lambda_2$ is a $(k_1+k_2)$-graph, it satisfies the factorization property and it can be described as follows. Suppose that $\lambda \in (v_1, v_2) (\Lambda_1 \times \Lambda_2)^{(e_j,0)}(w_1, v_2)$ where $e_j$ is a basis vector for $\N^{k_1}$, and $\nu \in (w_1, v_2) (\Lambda_1 \times \Lambda_2)^{(0,e_\ell)}(w_1, w_2)$ where  $e_\ell$ is a basis vector for $\N^{k_2}$.  
Then $\lambda$ and $\nu$ are composable since $s(\lambda)=(w_1,v_2)=r(\nu)$, and
  $\lambda$ corresponds to a morphism $\lambda_1 \in v_1 \Lambda^{e_j}_1 w_1$, and $\nu$ corresponds to a morphism $\nu_2 \in v_2 \Lambda^{e_\ell}_2 w_2$.
 
Then the factorization property of $\Lambda_1\times \Lambda_2$ implies that there exist $\tilde{\nu}\in (v_1,v_2)(\Lambda_1\times \Lambda_2)^{(0,e_\ell)} (v_1,w_2)$ and $\tilde \lambda \in (v_1, w_2)(\Lambda_1 \times \Lambda_2)^{(e_j,0)}(w_1, w_2)$ such that 
 \[ \lambda \nu = \tilde{\nu} \tilde{\lambda}.\]
 Note that $\tilde{\nu}\in (v_1,v_2)(\Lambda_1\times \Lambda_2)^{(0,e_\ell)} (v_1,w_2)$ corresponds to $\nu_2$, and $\tilde \lambda \in (v_1, w_2)(\Lambda_1 \times \Lambda_2)^{(e_j,0)}(w_1, w_2)$ corresponds to $\lambda_1$.
 
 The following proposition describes how to construct a $\Lambda_1\times \Lambda_2$-semibranching function system when we have $\Lambda_1$ and $\Lambda_2$-semibranching function systems on measure spaces $(X_1, \mu_1)$ and $(X_2, \mu_2)$ respectively.  In other words, Proposition \ref{prop:product-graph-SBFS} enables us to construct a wealth of $\Lambda$-semibranching function systems out of a few examples, such as the examples provided in Sections \ref{sec:examples_a} and \ref{sec:examples_gen_measure} below.
 
 \begin{prop}\label{prop:product-graph-SBFS}
For $i=1,2$, let $\Lambda_i $ be a $k_i$-graph with a $\Lambda_i$-semibranching function system $\{\tau^i_{\lambda}: \lambda \in \Lambda_i\}$ on $(X_i, \mu_i)$, with coding maps $\tau^{i, e_j}$ for $1 \leq j \leq k_i$.  For $\lambda \in \Lambda_1$ with $|d(\lambda)|=1$, let $\{\lambda^1_v: v \in \Lambda_2^0\}$ denote the corresponding edges in $\Lambda_1 \times \Lambda_2$, with $s(\lambda_v^1) = (s(\lambda), v)$ and $r(\lambda_v^1) = (r(\lambda), v)$; similarly for $\nu\in \Lambda_2$ but with $s(\nu^2_u)=(u,s(\nu))$, $r(\nu^2_u)=(u,r(\nu))$, where $u\in \Lambda_1^0$.    For $w \in \Lambda_1^0, v \in \Lambda_2^0$, define $D_{w,v} \subseteq X_1 \times X_2$ by 
\[D_{w, v} := D_w \times D_v \subseteq X_1 \times X_2.\]
 Then, define prefixing maps $\tau_{\lambda_v^1}, \tau_{\eta_w^2}$ on $X_1 \times X_2$  by 
\[\tau_{\lambda^1_v} (x, y) := \chi_{D_v}(y ) \cdot (\tau_\lambda^1(x), y), \quad  \tau_{\eta^2_w}(x,y) := \chi_{D_w}(x) \cdot (x, \tau^2_\eta(y) ),\]
and coding maps $\tau^{e_j}(x, y) = (\tau^{1, e_j}(x), y)$ if $1 \leq j \leq k_1$, or $\tau^{e_j}(x,y) = (x, \tau^{2, e_{j -k_1}}(y))$ if $k_1 < j \leq k_1 + k_2$.
The prefixing maps $\{\tau_{\lambda_v^1}, \tau_{\eta_w^2}: v \in \Lambda_1^0, w \in \Lambda_2^0, |d(\lambda)| = |d(\eta)| = 1\}$ and the coding maps $\{ \tau^{e_j}\}$ satisfy Conditions (i) - (v) of Theorem \ref{thm:SBFS-edge-defn} and thus give rise to a $\Lambda_1 \times \Lambda_2$-semibranching function system.
\end{prop}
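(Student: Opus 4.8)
The plan is to equip $X_1 \times X_2$ with the product measure $\mu := \mu_1 \times \mu_2$ and then verify the hypotheses together with Conditions (i)--(v) of Theorem~\ref{thm:SBFS-edge-defn} for the edge-level data $\{\tau_{\lambda^1_v}, \tau_{\eta^2_w}\}$ and the coding maps $\{\tau^{e_j}\}$; Theorem~\ref{thm:SBFS-edge-defn} then delivers the $\Lambda_1 \times \Lambda_2$-semibranching function system for free. The guiding principle throughout is that every object splits as a product: the domain and range sets factor as $D_{(w,v)} = D_w \times D_v$ and $R_{\lambda^1_v} = R^1_\lambda \times D_v$ (respectively $R_{\eta^2_w} = D_w \times R^2_\eta$), and each prefixing or coding map acts nontrivially in only one of the two coordinates. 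Consequently, nearly every required identity reduces, via Fubini/Tonelli, to the corresponding identity in one of the factor systems $\{\tau^i_\lambda\}$, which holds because each $\{\tau^i_\lambda\}$ is already a $\Lambda_i$-semibranching function system, and hence, by Theorem~\ref{thm:SBFS-edge-defn} applied to $\Lambda_i$, satisfies the analogues of (i)--(v).

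First I would check that for each basis vector $e_j$ of $\N^{k_1+k_2}$ the relevant prefixing maps form a semibranching function system on $(X_1\times X_2, \mu)$ with coding map $\tau^{e_j}$, since this is part of the hypothesis of Theorem~\ref{thm:SBFS-edge-defn}. Treating a color $e_j$ coming from $\Lambda_1$ (the $\Lambda_2$ case being symmetric), the ranges $\{R^1_\lambda \times D_v\}$ cover $X_1 \times X_2$ up to $\mu$-measure zero because $\bigcup_\lambda R^1_\lambda = X_1$ and $\bigcup_v D_v = X_2$ up to measure zero, and they are essentially disjoint since $R_{\lambda^1_v} \cap R_{\mu^1_u} = (R^1_\lambda \cap R^1_\mu) \times (D_v \cap D_u)$ vanishes in measure whenever $\lambda \ne \mu$ or $v \ne u$. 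The coding-map identity $\tau^{e_j}\circ \tau_{\lambda^1_v} = \id$ on the domain is immediate from $\tau^{1,e_j}\circ \tau^1_\lambda = \id$, and a Fubini computation shows the Radon--Nikodym derivative of $\tau_{\lambda^1_v}$ equals $\Phi_{\tau^1_\lambda}$ in the first coordinate, hence is positive $\mu$-a.e. Conditions (i) and (ii) then follow at once: edges of $\Lambda_1\times\Lambda_2$ with common source $(w_1,w_2)$ all have domain $D_{w_1}\times D_{w_2}$, of positive measure $\mu_1(D_{w_1})\mu_2(D_{w_2})$, while for distinct vertices the intersection of domains picks up a measure-zero factor. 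Condition (v) reduces to Condition~(v) for the appropriate factor system after observing that $(w_1,w_2)(\Lambda_1\times\Lambda_2)^{(e_i,0)} = \{(\lambda_1)^1_{w_2}: \lambda_1 \in w_1\Lambda_1^{e_i}\}$, so that $D_{(w_1,w_2)} \setminus \bigcup_g R_g = (D_{w_1}\setminus \bigcup_{\lambda_1} R^1_{\lambda_1}) \times D_{w_2}$; and Condition~(iv) is clear because coding maps from different factors act on different coordinates, and those from the same factor commute by hypothesis.

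The one genuinely new point, and the step I expect to require the most care, is Condition~(iii) in the \emph{mixed} case, where one edge comes from $\Lambda_1$ and the other from $\Lambda_2$; here the commuting square $\lambda\nu = \tilde\nu\tilde\lambda$ is exactly the one produced by the product structure described in the paragraph preceding the Proposition. Writing $\lambda = (\lambda_1)^1_{v_2}$, $\nu = (\nu_2)^2_{w_1}$, $\tilde\nu = (\nu_2)^2_{v_1}$, $\tilde\lambda = (\lambda_1)^1_{w_2}$, I would compute both compositions directly: each sends $(x,y)$ to $(\tau^1_{\lambda_1}(x), \tau^2_{\nu_2}(y))$ on the common domain $D_{w_1}\times D_{w_2}$, but carries a different pair of characteristic-function prefactors, namely $\chi_{D_{w_1}}(x)\,\chi_{D_{v_2}}(\tau^2_{\nu_2}(y))$ on one side versus $\chi_{D_{v_1}}(\tau^1_{\lambda_1}(x))\,\chi_{D_{w_2}}(y)$ on the other. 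The subtlety is to show these prefactors agree where it matters: using the range containments $R^1_{\lambda_1}\subseteq D_{r(\lambda_1)} = D_{v_1}$ and $R^2_{\nu_2}\subseteq D_{r(\nu_2)} = D_{v_2}$ from Remark~\ref{rmk:abs-cts-inverse}, together with the bookkeeping $s(\lambda_1) = w_1$, $s(\nu_2) = w_2$, both prefactors equal $1$ precisely on $D_{w_1}\times D_{w_2}$ and vanish off it, so the two maps coincide up to measure zero. The same-factor subcase of (iii) instead reduces immediately to Condition~(iii) for $\Lambda_1$ or $\Lambda_2$, and the containments $R_\alpha\subseteq D_\lambda$, $R_\beta \subseteq D_\nu$ are automatic from $R_\gamma \subseteq D_{r(\gamma)}$ in each factor. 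Collecting these verifications and invoking Theorem~\ref{thm:SBFS-edge-defn} completes the proof.
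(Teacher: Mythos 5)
Your proposal is correct and takes essentially the same route as the paper: the product structure renders Conditions (i), (ii), (iv), (v) of Theorem \ref{thm:SBFS-edge-defn} immediate, and the substantive point is Condition (iii) for a mixed pair of edges, where the product-graph factorization rule $\lambda^1_{r(\nu)}\nu^2_{s(\lambda)} = \nu^2_{r(\lambda)}\lambda^1_{s(\nu)}$ forces both compositions to equal $(\tau^1_\lambda, \tau^2_\nu)$. You merely supply more detail than the paper at the points it declares immediate --- the Fubini verification of the edge-level semibranching hypotheses and the tracking of the characteristic-function prefactors via the containments $R^i_\gamma \subseteq D_{r(\gamma)}$ from Remark \ref{rmk:abs-cts-inverse} --- which is a welcome tightening rather than a different argument.
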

\begin{proof}
By construction, $D_{\lambda_u^1} = D_{s(\lambda)} \times D_u = D_{s(\lambda), u} =D_{s(\lambda_u^1)}$ and $D_{\lambda_u^2} = D_u \times D_{s(\lambda)} = D_{u, s(\lambda)}=D_{s(\lambda^2_u)}$; since we began with $\Lambda_i$-semibranching function systems on $X_i$, for $i=1,2$, Conditions (i), (ii), (iv), and (v) of Theorem \ref{thm:SBFS-edge-defn} immediately follow.
To see Condition (iii), observe  that any pair $\lambda \in \Lambda_1, \nu \in \Lambda_2$ gives rise to exactly two composable pairs in $\Lambda_1 \times \Lambda_1$, namely $( \lambda_{r(\nu)}^1,\nu_{s(\lambda)}^2)$ and $(\nu_{r(\lambda)}^2 ,\lambda_{s(\nu)}^1)$ since $s(\lambda^1_{r(\nu)})=(s(\lambda),r(\nu))=r(\nu^2_{s(\lambda)})$ and $s(\nu^2_{r(\lambda)})=(r(\lambda),s(\nu))=r(\lambda^2_{s(\nu)})$.
The factorization rule for product graphs implies that
 \[ \lambda_{r(\nu)}^1\nu_{s(\lambda)}^2 = \nu_{r(\lambda)}^2 \lambda_{s(\nu)}^1 \in \Lambda_1 \times \Lambda_2.\]
 Consequently, $\tau_{\lambda_{r(\nu)}^1} \circ \tau_{\nu_{s(\lambda)}^2} = (\tau_\lambda, \tau_\nu) = \tau_{\nu_{r(\lambda)}^2} \circ \tau_{\lambda_{s(\nu)}^1 }$, so Condition (iii) holds.
 \end{proof}

\subsection{{Examples of $\Lambda$-semibranching function systems on Lebesgue measure spaces}}
\label{sec:examples_a}
{In this section, we describe a few  examples of $\Lambda$-semibranching function systems for finite $2$-graphs $\Lambda$.  In confirming that our examples are indeed $\Lambda$-semibranching function systems, we rely heavily on the characterization given in Theorem \ref{thm:SBFS-edge-defn}.


\begin{example}
\label{ex:sbfs-3v8e}
Consider the $2$-graph $\Lambda$ given in Example~7.7 of \cite{LLNSW} with the following skeleton.
\[
\begin{tikzpicture}[scale=2.5]
\node[inner sep=0.5pt, circle] (u) at (0,0) {$u$};
\node[inner sep=0.5pt, circle] (v) at (1.5,0) {$v$};
\node[inner sep=0.5pt, circle] (w) at (3,0) {$w$};
\draw[-latex, thick, blue] (v) edge [out=150, in=30] (u); 
\draw[-latex, thick, blue] (u) edge [out=-30, in=210] (v); 
\draw[-latex, thick, blue] (v) edge [out=30, in=150] (w); 
\draw[-latex, thick, blue] (w) edge [out=210, in=-30] (v); 
\draw[-latex, thick, red, dashed] (v) edge [out=120, in=60] (u); 
\draw[-latex, thick, red, dashed] (u) edge [out=-60, in=240] (v); 
\draw[-latex, thick, red, dashed] (v) edge [out=60, in=120] (w); 
\draw[-latex, thick, red, dashed] (w) edge [out=240, in=-60] (v); 
\node at (0.65, 0.12) {\color{black} $a_0$}; 
\node at (0.85, -0.12) {\color{black} $c_0$};
\node at (2.15, 0.12) {\color{black} $a_1$};
\node at (2.4, -0.12) {\color{black} $c_1$};
\node at (0.65, 0.55) {\color{black} $d_0$}; 
\node at (0.85, -0.55) {\color{black} $b_0$};
\node at (2.15, 0.55) {\color{black} $d_1$};
\node at (2.4, -0.55) {\color{black} $b_1$};
\end{tikzpicture}
\]
Here the blue and solid edges have degree $e_1$, and the red and dashed edges have degree $e_2$.
The factorization property of $\Lambda$ is given by, for $i=0,1$,
\[
a_ib_i=d_ic_i, \quad a_ib_{1-i}=d_ic_{1-i},\quad\text{and} \quad c_id_i=b_{1-i}a_{1-i}.
\]
In particular, 
\begin{equation}\label{eq:ex3_FP}
\begin{split}
&a_0b_0=d_0c_0, \quad a_1b_1=d_1c_1, \quad a_1b_0=d_1c_0,\\
&a_0b_1=d_0c_1, \quad c_0d_0=b_1a_1,\quad c_1d_1=b_0a_0
\end{split}
\end{equation}
Let $X=(0,1)$ be the unit open interval with  Lebesgue $\sigma$-algebra and measure  $\mu$.

 Let $D_u=(0,\frac{1}{3})$, $D_v=(\frac{1}{3}, \frac{2}{3})$ and $D_w=(\frac{2}{3}, 1)$. Then $\mu(X\setminus (D_u\cup D_v\cup D_w))=0$ and $\mu(D_i\cap D_j)=0$ for $i\ne j$ and $i,j\in \{u,v,w\}$, which gives Condition (i) and (ii) of Theorem~\ref{thm:SBFS-edge-defn}. We first define prefixing maps for blue (solid) edges;
\[\begin{split}
\tau_{a_0}(x)=\frac{3x-1}{3}\quad\;\;&\text{for $x\in D_{a_0}=D_v=\big(\frac{1}{3},\frac{2}{3}\big)$,}\\
\tau_{a_1}(x)=\frac{3x+1}{3}\quad\;\;&\text{for $x\in D_{a_1}=D_v=\big(\frac{1}{3},\frac{2}{3}\big)$,}\\
\tau_{c_0}(x)=\frac{x+1}{2}\quad\;\;&\text{for $x\in D_{c_0}=D_u=\big(0,\frac{1}{3}\big)$,}\\
\tau_{c_1}(x)=\frac{x}{2}\quad\;\;&\text{for $x\in D_{c_1}=D_w=\big(\frac{2}{3},1\big)$.}\\
\end{split}\]
Then the range sets are
\[
R_{a_0}=\big(0,\frac{1}{3}\big), \quad R_{a_1}=\big(\frac{2}{3},1\big),\quad R_{c_0}=\big(\frac{1}{2}, \frac{2}{3}\big), \quad\text{and}\quad \ R_{c_1}=\big(\frac{1}{3}, \frac{1}{2}\big).
\]
Thus, up to sets of measure zero, $D_u=R_{a_0}$, $D_v=R_{c_0}\cup R_{c_1}$, and $D_w=R_{a_1}$. So Condition (v) is satisfied for the degree $e_1$. 
 Moreover, for $e\in \{a_0, a_1, c_0, c_1\}$, the Radon--Nikodym derivative of $\tau_e$ on $D_e$ is given by
\begin{equation*}
\Phi_{{e}}(x) = \inf_{x \in E \subseteq D_{e}} \frac{(\mu\circ \tau_{e})(E)}{\mu(E)}\\
= \inf_{x \in E \subseteq D_{e}}\left\{ \begin{array}{cl}\frac{\frac{1}{2} \mu(E)}{\mu(E)}, & e = c_0, c_1 \\
\frac{\mu(E)}{\mu(E)}, & e = a_0, a_1
\end{array}\right.
= \left\{ \begin{array}{cl} \frac{1}{2}, & e = c_0, c_1 \\
1, & e = a_0, a_1
\end{array}\right.
\end{equation*}
since $\tau_e$ is linear for all $e \in \{a_0, a_1, c_0, c_1\}$.
Now define $\tau^{e_1}$ by
\[
\tau^{e_1}(x)=\begin{cases} \tau_{a_0}^{-1}(x)\quad\text{for}\;\; x\in R_{a_0} \\ \tau_{a_1}^{-1}(x)\quad\text{for}\;\; x\in R_{a_1}\\ \tau_{c_0}^{-1}(x)\quad\text{for}\;\; x\in R_{c_0} \\ \tau_{c_1}^{-1}(x)\quad\text{for}\;\; x\in R_{c_1}
\end{cases}
\]
Then $\tau^{e_1}$ is a coding map for $\{\tau_f: d(f)=e_1\}$. Therefore $\{\tau_f:D_f\to R_f, d(f)=e_1\}$ is a semibranching function system on $(X,\mu)$. Similarly, we define a semibranching function system for red (dashed) edges as follows.
\[\begin{split}
\tau_{d_0}(x)=\frac{-3x+2}{3}\quad\;\;&\text{for $x\in D_{d_0}=D_v=\big(\frac{1}{3},\frac{2}{3}\big)$,}\\
\tau_{d_1}(x)=\frac{-3x+4}{3}\quad\;\;&\text{for $x\in D_{d_1}=D_v=\big(\frac{1}{3},\frac{2}{3}\big)$,}\\
\tau_{b_0}(x)=\frac{-x+1}{2}\quad\;\;&\text{for $x\in D_{b_0}=D_u=\big(0,\frac{1}{3}\big)$,}\\
\tau_{b_1}(x)=\frac{-x+2}{2}\quad\;\;&\text{for $x\in D_{b_1}=D_w=\big(\frac{2}{3},1\big)$.}\\
\end{split}\]
Then 
$R_{d_0}=\big(0,\frac{1}{3}\big),\quad R_{d_1}=\big(\frac{2}{3},1\big),\quad R_{b_0}=\big(\frac{1}{3},\frac{1}{2}\big),\quad \text{and}\quad R_{b_1}=\big(\frac{1}{2},\frac{2}{3}\big)$.
Thus, $D_u=R_{d_0}$, $D_v=R_{b_0}\cup R_{b_1}$ and $D_w=R_{d_1}$, so Condition (v) is satisfied. Also we have $\mu(X\setminus (R_{d_0}\cup R_{d_1}\cup R_{b_0}\cup R_{b_1}))=0$ and $\mu(R_i\cap R_j)=0$ for $i\ne j$ and $i,j\in \{d_0,d_1,b_0,b_1\}$. For $e\in \{d_0,d_1,b_0,b_1\},$ the Radon--Nikodym derivative $\Phi_g$ is given by 
\begin{equation*}
\Phi_{{e}}(x) = \inf_{x \in E \subseteq D_{e}} \frac{(\mu\circ \tau_{e})(E)}{\mu(E)}\\
= \inf_{x \in E \subseteq D_{e}}\left\{ \begin{array}{cl}\frac{\frac{1}{2} \mu(E)}{\mu(E)}, & e = b_0,b_1 \\
\frac{\mu(E)}{\mu(E)}, & e = d_0, d_1
\end{array}\right.
  = \left\{ \begin{array}{cl} \frac{1}{2}, & e = b_0, b_1 \\
1, & e = d_0, d_1.
\end{array}\right.
\end{equation*}

Now we define $\tau^{e_2}$ similarly by
\[
\tau^{e_2}(x)=\begin{cases} \tau_{d_0}^{-1}(x)\quad\text{for}\;\; x\in R_{d_0} \\ \tau_{d_1}^{-1}(x)\quad\text{for}\;\; x\in R_{d_1} \\ \tau_{b_0}^{-1}(x)\quad\text{for}\;\; x\in R_{b_0} \\ \tau_{b_1}^{-1}(x)\quad\text{for}\;\; x\in R_{b_1}
\end{cases}
\]
Then $\tau^{e_2}$ is a coding map for $\{\tau_g:d(g)=e_2\}$. Thus, $\{\tau_g:D_g\to R_g, d(g)=e_2\}$ is a semibranching function system on $(X,\mu)$. 
One verifies in a straightforward fashition that conditions (iii) and (iv) of Theorem~\ref{thm:SBFS-edge-defn} holds for these prefixing maps.
It follows that the above maps give a $\Lambda$-semibranching function system on $(0,1)$  with Lebesgue measure by Theorem~\ref{thm:SBFS-edge-defn}.
\end{example}

\begin{example}
\label{ex:non-cst-RN}
We present here an example of a $\Lambda$-semibranching function system for the 2-graph of one vertex for which the Radon--Nikodym derivatives are not constant.

Consider the following 2-colored graph (cf. Example 4.1 of \cite{FGKPexcursions}).
\[
\begin{tikzpicture}[scale=1.7]
 \node[inner sep=0.5pt, circle] (v) at (0,0) {$v$};
\draw[-latex, blue, thick] (v) edge [out=140, in=190, loop, min distance=15, looseness=2.5] (v);
\draw[-latex, blue, thick] (v) edge [out=120, in=210, loop, min distance=40, looseness=2.5] (v);
\draw[-latex, red, thick, dashed] (v) edge [out=-30, in=60, loop, min distance=30, looseness=2.5] (v);
\node at (-0.6, 0.1) {$f_1$}; \node at (-1,0.3) {$f_2$}; \node at (0.75,0.15) {$e$};
\end{tikzpicture}
\]
Then there is a $2$-graph $\Lambda$ with the above skeleton and  factorization rules given by
\begin{equation}\label{eq:ex_facto}
f_1e=ef_2\quad\text{and}\quad ef_1=f_2e.
\end{equation}

Let $X = [0,1]^2$ and let $D_v = (0,1)^2$.  Define 
\begin{equation}\label{eq:pref-maps}
\tau_{f_1}(x, y) = (x, x+y-xy), \quad \tau_{f_2}(x,y) = (x, xy), \quad \tau_e(x,y) = (1-x, 1-y).
\end{equation}
Then $R_{f_1} = \{ (x, y): 0 < x < y\}$ and $R_{f_2} = \{ (x, y): 0 < y < x \}$, and 
\begin{equation}\label{eq:cod-maps}
\tau^{e_2} = \tau_e, \quad \tau^{e_1}(x, y)  = \begin{cases}
(x, y/x) & \text{ if }0 < y < x \\
\left(x, \frac{y-x}{1-x}\right) & \text{ if } 0 < x < y
\end{cases}
\end{equation}
To see that these functions satisfy the conditions of Theorem~\ref{thm:SBFS-edge-defn}, we must check that 
\[ \tau^{e_2} \circ \tau^{e_1} = \tau^{e_1} \circ \tau^{e_2} \quad \text{ and } \quad \tau_{f_i} \circ \tau_{e} = \tau_e \tau_{f_{i+1}}.\]
These equations follow from straightforward calculations.

We now compute the Radon--Nikodym derivatives associated to this $\Lambda$-semibranching function system.  Consider a rectangle $E \subseteq X$ with lower left vertex $(a, b)$ and upper right vertex $(a+\epsilon, b + \delta)$.  Then $\mu(E) = \epsilon \delta$, whereas $ \tau_{f_1}(E) $ is the quadrilateral 
bounded by the lines 
\[x = a, \quad  x = a+\epsilon, \quad y = (1-b-\delta) x + b + \delta, \quad y = (1-b)x + b,\]
so a straightforward calculation tells us that 
\[\mu(\tau_{f_1}(E)) = \delta \epsilon ( 1 - a - \epsilon/2),\] and hence 
$\frac{\mu(\tau_{f_1}(E))}{\mu(E)} = 1-a-\epsilon/2.$
Thus, 
\[\Phi_{f_1}(x,y) = \lim_{E \ni ( x, y)} \frac{\mu_{\tau_{f_1}}(E)}{\mu(E)} = 1-x.\]
Similar calculations to the above show that $\tau_{f_2}(E)$ is the quadrilateral bounded by the lines 
\[x=a, \quad x= a+\epsilon, \quad y =( b+\delta) x , \quad y = b x,\]
and hence 
$\frac{\mu({\tau_{f_2}}(E))}{\mu(E)} = a + \epsilon/2$.  Consequently, 
\[ \Phi_{f_2}(x,y) = x.\]

Since $\tau_e$ is linear, $\Phi_e(x,y) = 1$ for all $(x, y) \in D_v$. Hence the prefixing maps given in \eqref{eq:pref-maps} and coding maps given in \eqref{eq:cod-maps} give a $\Lambda$-semibranching function system by Theorem~\ref{thm:SBFS-edge-defn}.

\end{example}

\section{New classes  of $\Lambda$-semibranching function systems associated to probability measures on $\Lambda^\infty$}
\label{sec:examples_gen_measure}

In this section, 
we change our focus to $\Lambda$-semibranching function systems on the infinite path space $\Lambda^\infty$.  We indicate the variety of possible measures on $\Lambda^\infty$ which give rise to $\Lambda$-semibranching function systems, by using Lemmas \ref{lem-Kolm} and \ref{lem-RN-der-comp-limit} to construct many such measures. 

To be precise, 
 we describe a variety of examples of $\Lambda$-semibranching function systems on measure spaces of the form $(\Lambda^\infty, \mathcal{B}_\Lambda, \mu)$, using the standard prefixing and coding maps  $\{\sigma_{\lambda}\}$ and $\{\sigma^n\}$ given in Equations \eqref{prefixmaps} and \eqref{eq:shift-map},
and compare them to the standard $\Lambda$-semibranching function system of Example \ref{example:SBFS-M}.
We begin by describing examples which arise from  Kakutani's product measure construction \cite{Kaku}.  All of the $\Lambda$-semibranching function systems on $(\Lambda^\infty, \mu)$ that we obtain in this way are equivalent to the standard $\Lambda$-semibranching function system, in the sense that the measure $\mu$ is mutually absolutely continuous with respect to the measure $M$ of Equation \eqref{eq:M}.  

Moreover, as Section 3 of \cite{dutkay-jorgensen-monic} shows how to use Markov measures  to construct many inequivalent representations of $\mathcal O_N$,   we also extend these constructions in this section.  To be precise, we identify a family of 2-graphs $\Lambda$ for which the infinite path space $\Lambda^\infty$ either agrees with the infinite path space associated to $\mathcal O_N$, or to a disjoint union of such infinite path spaces.  We then apply the perspective of \cite[Section 3]{dutkay-jorgensen-monic} to construct Markov measures $\{\mu_x: x \in (0,1)\}$, and associated $\Lambda$-semibranching function systems, which yield a family of inequivalent representations of $C^*(\Lambda)$ on $L^2(\Lambda^\infty, \mu_x)$.  If $x \not= 1/2$, the measure $\mu_x$ is mutually singular to the measure $M$ of \eqref{eq:M}.
(For the definition of the Markov measures we are using, see  Definition 3.1 of \cite{dutkay-jorgensen-monic}, and also Definition \ref{def-Markov-measure} of this paper; for a generalized definition of Markov measures, see \cite{bezuglyi-jorgensen}.)

First, we record {in Proposition \ref{thm-lambda-sbfs-on-the-inf-path-space-via-a-measure}}
a straightforward consequence of the definition of a $\Lambda$-semibranching function system given in Definition \ref{def-lambda-SBFS-1}.  Note that Proposition~\ref{thm-lambda-sbfs-on-the-inf-path-space-via-a-measure} simplifies the work of checking when a probability measure on $\Lambda^\infty$ gives rise to a $\Lambda$-semibranching function system.

\begin{prop} 
\label{thm-lambda-sbfs-on-the-inf-path-space-via-a-measure}	
Let $\Lambda$ be a finite, strongly connected $k$-graph.
Suppose that the infinite path space $\Lambda^\infty$ of $\Lambda$  is endowed with a probability measure $p$ satisfying the following properties:
\begin{itemize}
\item[(a)] The standard  prefixing and coding maps $\{\sigma_\lambda\}_{\lambda\in \Lambda}$, $\{\sigma^m\}_{m \in \N^k}$ on $\Lambda^\infty$ given in Equations \eqref{prefixmaps} and \eqref{eq:shift-map} are measurable maps; 
\item[(b)] For all $v\in \Lambda^0$, we have $p(Z(v))>0$.
\item[(c)] Each of the edge prefixing operators $(\sigma_\lambda)_{\lambda\in \Lambda^{e_i}}$ has positive Radon--Nikodym derivative,
\[ \Phi_{\sigma_\lambda}:= \frac{d(p \circ \sigma_\lambda)}{dp} > 0, \text{ p. a.e. on } Z(s(\lambda)).\]
\end{itemize}
Then the maps $\sigma^n, \sigma_\lambda$ endow $(\Lambda^\infty, p)$ with a  $\Lambda$-semibranching function system. 
\end{prop}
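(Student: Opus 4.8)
The plan is to verify directly that the standard prefixing maps $\{\sigma_\lambda\}_{\lambda \in \Lambda}$ and coding maps $\{\sigma^m\}_{m \in \N^k}$ satisfy Conditions (a)--(d) of Definition~\ref{def-lambda-SBFS-1}, invoking the hypotheses (a)--(c) of the Proposition and the known combinatorial structure of $\Lambda^\infty$ where needed. The key observation is that the \emph{set-theoretic} identities defining a $\Lambda$-semibranching function system---namely $D_\lambda = Z(s(\lambda))$, $R_\lambda = Z(\lambda)$, $\sigma_\lambda \sigma_\nu = \sigma_{\lambda\nu}$, and $\sigma^m \sigma^n = \sigma^{m+n}$---hold automatically for the concatenation and shift maps on $\Lambda^\infty$, independent of the measure. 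Thus hypotheses (b) and (c) of the Proposition are precisely the measure-theoretic content one must supply, and the proof reduces to assembling these facts.

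First I would record the set-theoretic facts. Since $\sigma_\lambda(x) = \lambda x$, its range is exactly $Z(\lambda)$ and its domain is $Z(s(\lambda)) = D_\lambda$, giving Condition (b) of Definition~\ref{def-lambda-SBFS-1} once we note $\sigma_v = \id$ for vertices and that $p(Z(v)) > 0$ by hypothesis (b). Condition (c) of the Definition follows because concatenation is associative: $\sigma_\lambda \circ \sigma_\nu(x) = \lambda\nu x = \sigma_{\lambda\nu}(x)$ whenever $s(\lambda) = r(\nu)$, and $R_\nu = Z(\nu) \subseteq Z(r(\nu)) = Z(s(\lambda)) = D_\lambda$. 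Condition (d) is immediate from $\sigma^m \circ \sigma^n = \sigma^{m+n}$, which follows directly from the formula \eqref{eq:shift-map}. The shift maps commute as a consequence.

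The substantive step is Condition (a) of Definition~\ref{def-lambda-SBFS-1}: for each $m \in \N^k$, the family $\{\sigma_\lambda : d(\lambda) = m\}$ must be a semibranching function system with coding map $\sigma^m$. The coding relation $\sigma^m \circ \sigma_\lambda = \id_{D_\lambda}$ holds since $\sigma^{d(\lambda)}(\lambda x) = x$ by the definitions of the shift and concatenation. For Condition (a) of Definition~\ref{def-1-brach-system} (the covering/disjointness of ranges), I would use that $\{Z(\lambda) : d(\lambda) = m\}$ partitions $\Lambda^\infty$ up to the combinatorial structure: distinct paths of the same degree have disjoint cylinder sets, so $R_\lambda \cap R_\nu = Z(\lambda) \cap Z(\nu) = \emptyset$ for $\lambda \ne \nu$, and $\bigcup_{d(\lambda) = m} Z(\lambda) = \Lambda^\infty$ since $\Lambda$ is source-free (being strongly connected and finite). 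These are honest equalities, not merely a.e.\ statements, so the measure-zero conditions hold trivially. Finally, the positivity of the Radon--Nikodym derivatives $\Phi_{\sigma_\lambda} > 0$ for all $\lambda$ with $d(\lambda) = m$ follows from hypothesis (c) for edges together with the multiplicativity of Radon--Nikodym derivatives under composition, exactly as in the chain-rule computation carried out in the proof of Theorem~\ref{thm:SBFS-edge-defn}: writing $\lambda$ as a product of edges, $\Phi_{\sigma_\lambda}$ factors as a product of (compositions of) the edge derivatives, each positive $p$-a.e.

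The main obstacle I anticipate is purely bookkeeping rather than conceptual: one must confirm measurability of all the maps (supplied by hypothesis (a)) and verify that $\mu \circ \sigma_\lambda \ll \mu$ so that the Radon--Nikodym derivatives are well defined, which is where hypothesis (c) and the observation in Remark~\ref{rmk:abs-cts-inverse} that $\mu \circ (\sigma^n)^{-1} \ll \mu$ in any such system do the work. Since Conditions (b)--(d) of Definition~\ref{def-lambda-SBFS-1} are forced by the algebraic structure of concatenation and shifting, the entire proof amounts to checking that the three measure-theoretic hypotheses (a)--(c) of the Proposition are exactly what is needed to promote these algebraic identities to a genuine $\Lambda$-semibranching function system---so the Proposition is essentially a streamlined repackaging of Definition~\ref{def-lambda-SBFS-1} specialized to the concrete maps on $\Lambda^\infty$.
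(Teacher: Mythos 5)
Your proposal is correct and takes essentially the same approach as the paper: the paper's proof is a one-line deferral, observing that the verification is ``completely analogous to the proof of Proposition 3.4 of \cite{FGKP}'' (i.e., a direct check of Conditions (a)--(d) of Definition~\ref{def-lambda-SBFS-1} for the standard prefixing and coding maps on $\Lambda^\infty$), with the only new point being the positivity of the Radon--Nikodym derivatives $\Phi_{\sigma_\lambda}$ for arbitrary $\lambda \in \Lambda$. That is precisely the point you handle via the chain-rule factorization of $\Phi_{\sigma_\lambda}$ into edge derivatives as in the proof of Theorem~\ref{thm:SBFS-edge-defn}, so your write-up simply makes explicit what the paper leaves as a citation.
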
	

\begin{proof}
The proof is straightforward and completely analogous to the proof of Proposition 3.4 from \cite{FGKP}.  The only argument which differs slightly is to see that all Radon--Nikodym derivatives $\Phi_{\sigma_\lambda}$ are positive for any $\lambda\in\Lambda$, but that is checked in a straightfoward fashion.  
\end{proof}

\subsection{Kakutani-type probability  measures on $\Lambda^\infty$ }
\label{sec:examples_ab-cont-_measure}

We now apply Proposition \ref{thm-lambda-sbfs-on-the-inf-path-space-via-a-measure} to the 2-graph with one vertex in Example \ref{ex:non-cst-RN}.  To be precise, we use a product measure construction inspired by Kakutani in \cite{Kaku}  
to build a Borel measure on the infinite path space $\Lambda^\infty$ which satisfies the hypotheses of Proposition \ref{thm-lambda-sbfs-on-the-inf-path-space-via-a-measure}.  
Recall from Example \ref{ex:non-cst-RN} the $2$-graph $\Lambda$ with one vertex $v$, and two blue edges $f_1$ and $f_2$ and one red edge $e$ satisfying the factorization relations
$$ef_1=f_2e\;\;\;\text{and}\;\;\;ef_2=f_1 e.$$
For any $\xi\in \Lambda^{\infty},$ we can write $\xi$ uniquely as 
$$\xi\equiv\;eg_1eg_2eg_3\cdots eg_n\cdots $$
where $g_i\in \{f_1,f_2\}.$ 
We now fix a sequence of positive numbers $\{p_n=\frac{1}{2}+\gamma_n\}_{n=1}^{\infty}$, where $|\gamma_n|<\frac{1}{2}$, such that  $p_n < 1 $ for all $n$, $\lim_{n\to \infty}p_n=\frac{1}{2},$
and $\sum_{n=1}^{\infty}|\gamma_n|<\infty.$  
Set 
$$q_n=1-p_n= \frac{1}{2}-\gamma_n,$$
 and note that $\{q_n\}_{n=1}^{\infty}$ is also a sequence of positive numbers between $0$ and $1$ that tends to $\frac{1}{2}.$
For each $i \in \N$, define
\begin{equation}\label{eq:seq_alpha}
\alpha_i\;=\begin{cases}p_i=\frac{1}{2}+\gamma_i& \text{if} \;g_i=f_1, \\
          q_i=\frac{1}{2}-\gamma_i\; & \text{if}\;g_i=f_2,\;1\leq i\leq n.
          \end{cases}
\end{equation}
Then we define a function $\mu$ on square cylinder sets $Z(e g_1 e g_2 e \cdots g_n)$   by 
\begin{equation}\label{eq:mu_n}
\mu(Z(eg_1eg_2eg_3\cdots eg_n))=\prod_{i=1}^n\alpha_i.
\end{equation}
Also we define an empty product to be 1, so $\mu(Z(v))=1$ for $v\in \Lambda^0$.

\begin{prop}\label{prop:example_exonevtwoe}
Let $\Lambda$ be the 2-graph of Example  \ref{ex:non-cst-RN}. Let $(\alpha_n)_n$ be a sequence given by \eqref{eq:seq_alpha}, and $\mu$ be the function  associated to $(\alpha_n)_n$ as in \eqref{eq:mu_n}.  Then 
\begin{itemize}
\item[(a)] The function $\mu$ extends uniquely to a Borel probability measure on $\Lambda^\infty$, and the standard prefixing and coding maps $(\sigma_\lambda, \sigma^n)$ endow  $(\Lambda^\infty, \mu)$ with a $\Lambda$-semibranching function system. 
\item[(b)] Each such measure $\mu$ is equivalent to the Perron--Frobenius measure $M$ of Equation \eqref{eq:M}.  
\item[(c)] The $\Lambda$-semibranching representation of $C^*(\Lambda)$ on $L^2(\Lambda^\infty, \mu)$ is unitarily equivalent to the standard $\Lambda$-semibranching representation.  In particular, the $\Lambda$-semibranching representations on such measure spaces $L^2(\Lambda^\infty, \mu)$  are all unitarily equivalent.
\end{itemize}
\label{prop:prod-meas-onevtwoe}
\end{prop}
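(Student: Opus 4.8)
The plan is to identify $\Lambda^\infty$ with the one-sided Bernoulli shift space $\prod_{i=1}^\infty\{f_1,f_2\}$ via the normal form $\xi\equiv eg_1eg_2\cdots$, and to recognize both $\mu$ and the Perron--Frobenius measure $M$ as infinite product measures on this space. Since $\rho(\Lambda)=(2,1)$ and $\kappa^\Lambda=1$, Equation \eqref{eq:M} gives $M(Z(eg_1\cdots eg_n))=2^{-n}$, so $M$ corresponds to the uniform product measure, while \eqref{eq:mu_n} shows $\mu$ corresponds to $\bigotimes_i(p_i\delta_{f_1}+q_i\delta_{f_2})$. The factorization property guarantees that the $2^n$ words $(g_1,\dots,g_n)\in\{f_1,f_2\}^n$ exhaust $\Lambda^{(n,n)}$, so the square cylinder sets $Z(eg_1\cdots eg_n)$ are precisely the atoms of the $\sigma$-algebra $\mathcal{F}_n$, and $\bigvee_n\mathcal{F}_n=\mathcal{B}_o(\Lambda^\infty)$.

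For the extension in part (a), I would restrict the set function \eqref{eq:mu_n} to $\mathcal{F}_n$, call it $\mu_n$, and use $p_{n+1}+q_{n+1}=1$ to verify simultaneously that each $\mu_n$ is a probability measure and that Kolmogorov's consistency condition $\mu_{n+1}|_{\mathcal{F}_n}=\mu_n$ holds (both reduce to summing $\alpha_{n+1}$ over $g_{n+1}\in\{f_1,f_2\}$). Lemma \ref{lem-Kolm} then produces the desired Borel probability measure $\mu$. The standard maps $\sigma_\lambda,\sigma^m$ are continuous, hence measurable, and $\mu(Z(v))=\mu(\Lambda^\infty)=1>0$, so by Proposition \ref{thm-lambda-sbfs-on-the-inf-path-space-via-a-measure} the only remaining point for the $\Lambda$-semibranching function system is positivity of the edge Radon--Nikodym derivatives $\Phi_{\sigma_\lambda}$, which I will obtain as a by-product of part (b).

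For part (b), on the atom $Z(eg_1\cdots eg_n)$ the density $R_n:=d\mu_n/dM_n$ is the constant $\mu(Z(eg_1\cdots eg_n))/M(Z(eg_1\cdots eg_n))=\prod_{i=1}^n 2\alpha_i=\prod_{i=1}^n(1\pm2\gamma_i)$. Because $\sum_i|\gamma_i|<\infty$ and $|\gamma_i|<\tfrac12$, each factor lies in $(0,2)$ and $\sum_i|\log(1\pm2\gamma_i)|<\infty$, so $R:=\lim_n R_n=\prod_{i=1}^\infty(1\pm2\gamma_i)$ exists, is finite, and is strictly positive everywhere. Lemma \ref{lem-RN-der-comp-limit} then gives $\mu\ll M$ with $d\mu/dM=R$, and applying the same Lemma to $1/R_n$ gives $M\ll\mu$; hence $\mu$ and $M$ are equivalent, which is part (b). (This is exactly Kakutani's dichotomy \cite{Kaku}: $\sum|\gamma_i|<\infty$ forces $\sum\gamma_i^2<\infty$, i.e. finite Hellinger distance, so the product measures are equivalent rather than mutually singular.) Writing $h:=d\mu/dM=R>0$, the chain rule $\Phi^\mu_{\sigma_\lambda}=(h\circ\sigma_\lambda)\,\Phi^M_{\sigma_\lambda}/h$ together with $\Phi^M_{\sigma_\lambda}=\rho(\Lambda)^{-d(\lambda)}>0$ and $h>0$ now yields $\Phi^\mu_{\sigma_\lambda}>0$ $\mu$-a.e., completing part (a).

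For part (c), I would introduce the unitary $U:L^2(\Lambda^\infty,\mu)\to L^2(\Lambda^\infty,M)$ given by $U\xi=h^{1/2}\xi$, which is isometric onto because $h=d\mu/dM$. Writing $S^\mu_\lambda,S^M_\lambda$ for the operators of Theorem \ref{thm:separable-repn} attached to $\mu$ and $M$, the claim $US^\mu_\lambda U^*=S^M_\lambda$ reduces, after substituting $y=\sigma^{d(\lambda)}(x)$ for $x\in R_\lambda=Z(\lambda)$, to the pointwise identity $\Phi^\mu_{\sigma_\lambda}(y)\,h(y)=h(\sigma_\lambda(y))\,\Phi^M_{\sigma_\lambda}(y)=h(x)\,\Phi^M_{\sigma_\lambda}(y)$, which is precisely the chain rule established in part (b). Thus $\pi_\mu$ is unitarily equivalent to $\pi_M=\pi_S$, the standard representation of Remark \ref{rmk:defn-standard-repn}; since every measure in this family is equivalent to $M$, they are all mutually unitarily equivalent. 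The main obstacle is part (b): verifying that the limiting density $R$ is a.e. strictly positive and finite, which is exactly where the summability hypothesis $\sum_i|\gamma_i|<\infty$ is essential, and which simultaneously underwrites the positivity needed in (a) and the intertwining in (c).
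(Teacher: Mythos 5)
Your proof is correct, and it reaches the same conclusions as the paper's, but by a genuinely different route in part (b) and in how the positivity required for part (a) is obtained. The paper proves (a) by applying Lemma \ref{lemma-limit-RN} directly to each prefixing map $\sigma_e,\sigma_{f_1},\sigma_{f_2}$: it computes $\lim_N \mu(\sigma_{f_1}(Z(\lambda_N)))/\mu(Z(\lambda_N))$ as a ratio of partial products and controls it with the logarithmic-series estimate; it then proves (b) separately, by transporting $\mu$ and $M$ to the product space $\prod_i \{0,1\}$ and citing Kakutani's criterion (Corollary 1 of Section 10 of \cite{Kaku}) for equivalence of infinite product measures, verifying that the Hellinger-type series converges when $\sum_i|\gamma_i|<\infty$. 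You instead prove (b) first and make it load-bearing: you compute the atomwise densities $R_n = d\mu_n/dM_n = \prod_{i\le n}(1\pm 2\gamma_i)$, apply the paper's own Lemma \ref{lem-RN-der-comp-limit} to $R_n$ and to $1/R_n$ to get mutual absolute continuity with $d\mu/dM = R$ bounded between $\prod_i(1-2|\gamma_i|)>0$ and $\prod_i(1+2|\gamma_i|)<\infty$, and then deduce the edge Radon--Nikodym positivity in (a) from the chain rule $\Phi^\mu_{\sigma_\lambda} = (h\circ\sigma_\lambda)\,\Phi^M_{\sigma_\lambda}/h$ with $h=R$. Your route is more self-contained (Kakutani appears only as a parenthetical confirmation, not as a citation the proof depends on) and more economical: a single log-series estimate does the work that the paper performs once per edge (one computation written out, two asserted by analogy) and then once more, implicitly, inside Kakutani's theorem. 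What the paper's arrangement buys is independence of the parts --- its proof of (a) does not presuppose (b) --- and the appeal to Kakutani's dichotomy situates the example in context, explaining why $\sum_i|\gamma_i|<\infty$ is the natural hypothesis and foreshadowing the mutually singular Markov-measure examples of Section \ref{sec-Markov-measure-Lambda-semibran-0}. Part (c) is essentially identical in the two treatments: the multiplication unitary by $(d\mu/dM)^{1/2}$ (the paper's $W_\mu$, your $U$), with your version usefully making explicit the pointwise chain-rule identity $\Phi^\mu_{\sigma_\lambda}(y)h(y) = h(\sigma_\lambda(y))\Phi^M_{\sigma_\lambda}(y)$ that the paper leaves as ``one checks.''
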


\begin{proof}
To see (a), recall that $\F_n$ is the $\sigma$-algebra generated by $\{Z(\lambda):d(\lambda)=(n,\dots,n)\}$ and $\F_{n}\subseteq \F_{n+1}$. Thus Lemma~\ref{lem:kolmogorov} implies that $\mu$ induces a measure on $\Lambda^\infty$ if, defining $\nu_n:=\mu|_{\F_n}$, we have $\nu_{n+1}|_{\F_n}=\nu_n$.
This is equivalent to saying that $\mu$ is additive on square cylinder sets.
To see that $\mu$ is a probability measure we observe that $\mu(\Lambda^\infty) = \mu(Z(v))$ is the empty product and hence equal to 1 by definition.

Thus, to see that $\mu$ extends to a Borel probability measure on $\Lambda^\infty,$ it only remains to check that $\mu$ is finitely additive on square cylinder sets. If we define $h_i$ to equal $f_1$ when $g_i = f_2$, and vice versa (so that $h_i, g_i \in \{ f_1, f_2\}$ and $h_i \not= g_i$) then we have $Z(e g_1 \cdots e g_n) = Z(e g_1 \cdots eg_ne   g_{n+1}) \sqcup Z(e g_1 \cdots e g_n e h_{n+1})$, a disjoint union of cylinder sets. Therefore, 
\begin{align*} \mu(Z(e g_1 \cdots eg_n e   g_{n+1})) &+ \mu (Z(e g_1 \cdots eg_n e   h_{n+1})) \\
&= \mu(Z(e g_1 \cdots e g_n ))(1/2 + \gamma_{n+1}) + \mu(Z(e  g_1 \cdots e g_n  ))(1/2 - \gamma_{n+1}) \\
&= \mu (Z(e g_1 \cdots e g_n)).
\end{align*}
Arguing inductively, 
we conclude that $\mu$ is finitely additive on square cylinder sets, as claimed. 

We now check that $\mu$ satisfies the hypotheses of Proposition~\ref{thm-lambda-sbfs-on-the-inf-path-space-via-a-measure}.  Since $\mu$ is a Borel measure and the maps $(\sigma_\lambda, \sigma^n)$ are continuous, they are measurable; and we observed above that $\mu(Z(v)) = 1$.
It remains to check that each of the edge prefixing operators, $\sigma_{f_1}, \sigma_{f_2}, \sigma_{e}$, has positive Radon--Nikodym derivatives. To do so, we will use Lemma 	\ref{lemma-limit-RN}. 

Fix an infinite path $\xi \equiv e g_1 e g_2 e g_3 \cdots$.
Define $\ell_i\in \{0,1\}$ so that $\alpha_i = 1/2 + (-1)^{\ell_i} \gamma_i$, 
and let 
$m_i=1-\ell_i.$ For $N \in \N$, we let $\lambda_N=e g_1 \cdots e g_N$.
Then the factorization rule $e f_i = f_{i+1} e$ implies that 
\[\begin{split}
\sigma_{f_1}(Z(\lambda_{N}))&=\{\zeta =(\zeta_i) \in \Lambda^{\infty}:   \zeta_{2j-1}=e\;\;\text{for}\;\; 1\le j\le N,\;\zeta_2 = f_2, \; \zeta_{2i}=h_i,\;2\;\leq i\leq N\}\\
& = Z(e f_2 e h_1 \cdots e h_N).
\end{split}
\]
Since $g_i\ne h_i\in \{f_1,f_2\}$ as described above,
it follows that 
\[
\mu(\sigma_{f_1}(Z(\lambda_{N}))= (\frac{1}{2} -\gamma_1)\prod_{i=2}^{N+1}[\frac{1}{2}+(-1)^{m_i}\gamma_i]
\]

Since we also have 
$$\mu(Z(\lambda_{N}))=\prod_{i=1}^N[\frac{1}{2}+(-1)^{\ell_i}\gamma_i],$$
it follows that (multiplying numerator and denominator by $2^N$)
\begin{equation}
\label{eq-Judy-RN-example}
\frac{\mu(\sigma_{f_1}Z(\lambda_{N}))}{\mu(Z(\lambda_{N}))}\;=\;\left( (\frac{1}{2} - \gamma_1)\prod_{i=2}^{N+1}[1+(-1)^{m_i}2\gamma_i] \right) /\left( \prod_{i=1}^N[1+(-1)^{\ell_i}2\gamma_i]\right).
\end{equation}
We then have 
\[ \Phi_{f_1}(\xi):=\frac{d(\mu \circ \sigma_{f_1})}{d\mu}(\xi) = \lim_{N \to \infty} \frac{\mu(\sigma_{f_1}(Z(\lambda_N))}{\mu(Z(\lambda_N))}.\]
To see that the Radon--Nikodym derivative $\Phi_{f_1}$ is positive, note that
standard results on infinite products imply that, since $|\gamma_i|<1/2$ and $\sum_{i\in \N} |\gamma_i |< \infty$ by hypothesis, 
\[ 
\lim_{n\to \infty}\prod_{i=2}^n \left( 1+ (-1)^{m_i} 2 \gamma_i \right) \quad\text{and}\quad \lim_{n\to \infty}\prod_{i=1}^n (1+(-1)^{\ell_i}2\gamma_i)
\]
are both finite, positive and nonzero for any sequences $(m_i)_i , (\ell_i)_i \subseteq \{ 0,1\}^\N$.  
Indeed, if we let $L$ be the sum of the logarithmic series associated to the denominator $P=\prod_{i=1}^\infty [1+(-1)^{\ell_i}2\gamma_i]$, then one can check that $L=\ln P=\sum_{i=1}^\infty \ln ([1+(-1)^{\ell_i}2\gamma_i])$ has the same absolute convergence behavior as the series 
\[
\sum_{i=1}^\infty |(-1)^{\ell_i}2\gamma_i|,\;\; 
equivalently, \ \; \sum_{i=1}^{\infty}|\gamma_i|;
\]
this latter series converges by hypothesis. Thus, the series $\sum_{i=1}^\infty \ln ([1+(-1)^{\ell_i}2\gamma_i])$ converges conditionally to a number $L =\sum_{i=1}^\infty \ln ([1+(-1)^{\ell_i}2\gamma_i])\in \R$. But since $L=\ln P$, it cannot be that $P=0$. Therefore, the Radon--Nikodym derivative 
\[
\Phi_{f_1}(\xi)= \frac{d(\mu \circ \sigma_{f_1})}{d\mu}(\xi) = \lim_{N \to \infty} \frac{\mu(\sigma_{f_1}(Z(\lambda_N))}{\mu(Z(\lambda_N))}
\]
converges and is positive as desired. 

Similar calculations, by  using Lemma 	\ref{lemma-limit-RN},  yield the same conclusion for the Radon--Nikodym derivatives associated to $\sigma_e$ and $\sigma_{f_2}$, showing that all the hypotheses of Proposition~\ref{thm-lambda-sbfs-on-the-inf-path-space-via-a-measure}	 are satisfied in this case.  We conclude that $\mu$ makes $\Lambda^\infty$ into a $\Lambda$-semibranching function system with the standard prefixing and coding maps $(\sigma_\lambda, \sigma^n)$, which proves (a).

To see (b), we now use Kakutani's work on product measures to compare the measures $\mu$ constructed in (a) with the Perron--Frobenius measure $M$ on $\Lambda^\infty$ given in \eqref{eq:M}.  Note first that $M$ is a special case of the measure $\mu$ described above, given by taking $\gamma_i = 0$ for all $i$.  

A moment's reflection shows that  $(\Lambda^{\infty},\mu)$ is measure-theoretically isomorphic to 
\[\left(\prod_{i=1}^{\infty}[\{0,1\}]_i, \prod_{n=1}^{\infty}\mu_i\right),\]
where $\prod_{i=1}^{\infty}[\{0,1\}]_i$ is the set of all sequences consisting of $0$ and $1$ only, and
the measure $\mu_i$ on the $i^{th}$ factor space $\{0,1\}$ is given by 
$$\mu_i(\{0\})=\frac{1}{2}+\gamma_i\quad\text{and}\quad \;\mu_i(\{1\})=\frac{1}{2}-\gamma_i\quad\text{for}\;\; i\in \mathbb N.$$
The isomorphism is given by $\prod_{i\in \N} [\{0,1\}]_i \ni (a_i)_{i\in \N} \mapsto e f_{a_1+1} e f_{a_2+1} e \cdots\in \Lambda^\infty$.
It follows from Corollary 1 of Section 10 of \cite{Kaku} that the measure $\mu$ on $\Lambda^{\infty}$ is equivalent (mutually absolutely continuous)  to the Perron--Frobenius measure $M$ whenever the infinite series
$$\sum_{i=1}^{\infty}\left(\sqrt{\frac{1}{2}}-\sqrt{\frac{1}{2}+\gamma_i}\right)^2+\left (\sqrt{\frac{1}{2}}-\sqrt{\frac{1}{2}-\gamma_i}\right)^2,$$  or equivalently, the infinite series 
$$ \sum_{i=1}^{\infty}\left(1-\frac{\sqrt{1+2\gamma_i}}{2}-\frac{\sqrt{1-2\gamma_i}}{2}\right),
$$
converges.  However, this series converges whenever $\sum_{i \in \N} |\gamma_i| < \infty$. But this is our standing hypothesis, and  hence the measure $\mu$ constructed in this fashion is equivalent to $M$.

To see (c), let $g_\mu \in L^2(\Lambda^\infty, \mu )$ be given by 
\[ g_\mu(x) = \sqrt{\frac{d\mu}{dM}(x)},\]
and define $W_\mu : L^2(\Lambda^\infty, \mu ) \to L^2(\Lambda^\infty, M)$ by $W_\mu (f) = g_\mu  f.$
Then one checks that $W_\mu^*(f) = \frac{f}{g_\mu}$ is given by multiplication by $\sqrt{\frac{dM}{d\mu}(x)}$.

For $\lambda \in \Lambda$, write $S_\lambda^\mu$ for the operator on $L^2(\Lambda^\infty, \mu)$ associated to $\lambda$ via the $\Lambda$-semibranching function system on $(\Lambda^\infty, \mu)$, as in Theorem 3.5 of \cite{FGKP}; that is,  if $d(\lambda) = n$,
\begin{equation}\label{eq:SBFS-rep-formula}
 S_\lambda^\mu(\chi_{Z(\eta)})(x) = \left(\frac{d\mu}{d(\mu \circ \sigma^n)}(x) \right)^{-1/2} \chi_{Z(\lambda \eta)}(x).\end{equation}
Moreover, the formula of $W_\mu$ implies that
\[ W_\mu^* S_\lambda^M W_\mu (\chi_{Z(\eta)} )(x) = S_\lambda^\mu (\chi_{Z(\eta)})(x).
\]
Thus, $L^2(\Lambda^\infty, \mu)$ and $L^2(\Lambda^\infty, M)$ are unitarily equivalent, via the unitary $W_\mu$ which intertwines the two $\Lambda$-semibranching representations, $S_\lambda^\mu$ and $S_\lambda^M$.
It follows that
any   $\Lambda$-semibranching function system on $\Lambda^\infty$ associated to a measure $\mu$ as described above  give rise to a representation of $C^*(\Lambda)$ which is equivalent to the standard $\Lambda$-semibranching representation on $L^2(\Lambda^\infty, M)$.
\end{proof} 

The equivalence of the $\Lambda$-semibranching representations discussed above is an instance of a more general phenemenon. In fact, we can apply the above construction to the 2-graph  in Example~\ref{ex:sbfs-3v8e}, namely $\Lambda_2$, and the 2-graph $\Lambda_{2N}$ described below which is a generalization of $\Lambda_2$.
The key idea is to realize the infinite path space of given 2-graphs as the disjoint union of the infinite product spaces and define a product measure accordingly on each of them.
Since one can check that any such product measure is equvalent to the Perron-Frobenius measure $M$, we only  give the construction of such product measures on $\Lambda_{2N}$.

For each $N \in \N$, the $2$-graph $\Lambda_{2N}$ has $2N +1$ vertices labeled $v, u_1, \ldots, u_N, w_1, \ldots w_N$ with red and blue edges connecting $v$ with each of the vertices $u_i, w_i$, in both directions:
 The 2-colored graph (or skeleton) of $\Lambda_{2N}$ is given as below.
\begin{equation}
\begin{tikzpicture}[scale=2.5]
\node[inner sep=0.5pt, circle] (u_1) at (0,0) {$u_1$};
\node[inner sep=0.5pt, circle] (u_a) at (-.1,-.3) {$u_2$};
\node[inner sep=0.5pt, circle] (u_d) at (0,-.5) {$u_3$};
\node[inner sep=0.5pt, circle] (u_b) at (.1,-.7) {$u_4$};
\node[inner sep=0.5pt, circle] (u_c) at (.2,-.95) {$\ddots$};
\node[inner sep=0.5pt, circle] (v) at (1.5,0) {$v$};
\node[inner sep=0.5pt, circle] (w_1) at (3,0) {$w_{1}$};
\node[inner sep=0.5pt, circle] (w_a) at (3.1,.5) {$w_{2}$};
\node[inner sep=0.5pt, circle] (w_b) at (2.9,.7) {$w_{3}$};
\node[inner sep=0.5pt, circle] (w_d) at (2.7,.9) {$w_{4}$};
\node[inner sep=0.5pt, circle] (w_c) at (2.4,1.25) {$\ddots$};
\node[inner sep=0.5pt, circle] (v) at (1.5,0) {$v$};
\node[inner sep=0.5pt, circle] (w_2) at (1,1) {$w_{N}$};
\node[inner sep=0.5pt, circle] (u_2) at (1,-1) {$u_N$};
\draw[-latex, thick, blue] (v) to[bend left=22] (u_2); 
\draw[-latex, thick, blue] (u_2) to[bend left=22] (v); 
\draw[-latex, thick, blue] (v) to[bend left=22](w_2); %
\draw[-latex, thick, blue] (w_2) to[bend left=22] (v); %
\draw[-latex, thick, red, dashed] (v) to[bend left=42] (u_2); %
\draw[-latex, thick, red, dashed] (u_2) to[bend left=42] (v); %
\draw[-latex, thick, red, dashed] (v) to[bend left=42] (w_2); %
\draw[-latex, thick, blue] (v) to[bend left=22] (u_1); 
\draw[-latex, thick, red, dashed] (w_2) to[bend left=42](v); %
\draw[-latex, thick, blue] (v) to[bend left=22] (u_1); 
\draw[-latex, thick, blue] (u_1) to[bend left=22] (v); 
\draw[-latex, thick, blue] (v) to[bend left=22] (w_1); 
\draw[-latex, thick, blue] (w_1) to[bend left=22](v); 
\draw[-latex, thick, red, dashed] (v) to[bend left=42] (u_1); 
\draw[-latex, thick, red, dashed] (u_1)to[bend left=42] (v); 
\draw[-latex, thick, red, dashed] (v) to[bend left=42] (w_1); 
\draw[-latex, thick, red, dashed] (w_1) to[bend left=42](v); 
\end{tikzpicture}
\label{eq:Lambda2N-skeleton}
\end{equation}
 There are multiple choices of factorization rules that will make the above skeleton into a 2-graph.  Regardless of the factorization rule we choose, every (finite or infinite) path will have a unique representative as an alternating string of blue (solid) and red (dashed) edges, with the first edge being red. In fact, such a path is completely determined by the sequence of vertices it passes through: we fix a relabeling the vertices $u_i, w_i$ of $\Lambda_{2N}$ by $\{ Q_i\}_{i=1}^{2N}$, and then every infinite path $\xi$ with range $v$ is specified uniquely by a string of vertices  
\begin{equation*} \xi \equiv (v, Q_1, v, Q_3, \ldots ) \text{ where } Q_{2i+1} = u_j \text{ or } w_j  \text{ for some } 1 \leq j \leq N.\label{eq:inf-path-Lambda2N}
\end{equation*}
Similarly, if $r(\xi) \in \{u_j, w_j\,:\,  1\le j\le N \}$, then $\xi \equiv ( Q_0, v, Q_2, v, \ldots) $ for a unique sequence $(Q_{2i})_i \in \{ u_j, w_j: 1 \leq j \leq N\}$ for $i\in \N$.

With this notation, the isomorphism between $\prod_{i\in \N} \Z_{2N} \sqcup \prod_{i \in \N} \Z_{2N}$ and $\Lambda^\infty_{2N}$  is given by mapping a sequence $( a_i)_{i \in \N}$ in the first copy of $\prod_{i\in \N} \Z_{2N}$ to $\xi \equiv (v, Q_{a_1}, v, Q_{a_2}, \ldots)$ and mapping a sequence $(b_i)_{i\in \N}$ in the second copy of $ \prod_{i\in \N} \Z_{2N}$ to the infinite path $\xi \equiv (Q_{b_1}, v, Q_{b_2}, v, \ldots)$.

Thus, given $N$ sequences $\{ (\delta_i^j)_{i\in \N}\}_{j=1}^N$ with $\sum_i |\delta^j_i| < \infty$ for all $j$, we can define an associated
 product
measure $\mu_{2N}$ on $\Lambda^\infty_{2N}$.  Given  $\eta \in \Lambda_{2N}$ with $d(\eta) = (n,n)$, we identify $\eta$ with the string of vertices it passes through:
\begin{equation}\label{eq:2N-paths}
r(\eta) = v \Rightarrow \eta \equiv (v, Q_1, \ldots, Q_{2n-1}, v) \qquad r(\eta) \not= v \Rightarrow \eta \equiv (Q_0, v, Q_2, \ldots, v, Q_{2n}),
\end{equation}
where $Q_i \not= v$.  Then, we define
\begin{equation}\label{eq:measure_2N}
\alpha_i = \begin{cases} \delta^j_i, & Q_i = u_j \\ -\delta^j_i, & Q_i = w_j \end{cases}  \qquad \text{and } \qquad \mu_{2N}(Z(\eta)) =\begin{cases} \prod_{i=1}^n \frac{1 + \alpha_{2i-1}}{2N}, & r(\eta) = v \\   \prod_{i=0}^n \frac{1 + \alpha_{2i}}{2N}, & r(\eta) \not= v. 
\end{cases}\end{equation}

The proof of the following Proposition can be carried out in a similar fashion to the proof of Proposition~\ref{prop:example_exonevtwoe}.

\begin{prop}
\label{prop:inf-path-Lambda-2N}
The formula for $\mu_{2N}$ given in \eqref{eq:measure_2N} defines a measure on $\Lambda^\infty_{2N}$ which is equivalent  to the Perron--Frobenius measure $M$ on $\Lambda^\infty$ of Equation \eqref{eq:M}. The standard prefixing and coding maps $(\sigma^n, \sigma_\lambda)$ make $(\Lambda^\infty_{2N}, \mu_{2N})$ into a $\Lambda$-semibranching function system.  Moreover,  the resulting $\Lambda$-semibranching representation of $C^*(\Lambda)$ on $L^2(\Lambda^\infty, \mu_{2N})$ is equivalent to that associated to the Perron--Frobenius measure $M$ on $\Lambda^\infty$ of Equation \eqref{eq:M}.

\end{prop}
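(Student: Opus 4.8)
The plan is to follow the three-part structure of the proof of Proposition~\ref{prop:example_exonevtwoe}, adapting each step to the disjoint-union description of $\Lambda_{2N}^\infty$. First I would establish that the formula \eqref{eq:measure_2N} extends to a genuine measure. As in part (a) of Proposition~\ref{prop:example_exonevtwoe}, this reduces via the Kolmogorov Extension Theorem (Lemma~\ref{lem:kolmogorov}) to checking that $\mu_{2N}$ is finitely additive on square cylinder sets, i.e.\ that $\nu_{n+1}|_{\F_n} = \nu_n$ for $\nu_n := \mu_{2N}|_{\F_n}$. The essential computation is that extending a path $\eta$ of degree $(n,n)$ by one more non-$v$ vertex $Q_{2n+1}$ produces $2N$ children, and summing their measures gives $\mu_{2N}(Z(\eta)) \cdot \frac{1}{2N}\sum_{j=1}^N \big( (1+\delta_{2n+1}^j) + (1-\delta_{2n+1}^j)\big) = \mu_{2N}(Z(\eta))$, so that additivity holds; the symmetric statement holds for paths with $r(\eta) \neq v$.

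Next I would verify the hypotheses of Proposition~\ref{thm-lambda-sbfs-on-the-inf-path-space-via-a-measure}. Measurability of the standard prefixing and coding maps is immediate from their continuity, and positivity of the vertex-cylinder measures $\mu_{2N}(Z(v)), \mu_{2N}(Z(u_j)), \mu_{2N}(Z(w_j))$ follows from the hypothesis that each $|\delta_i^j| < 1$, which forces every factor $1 + \alpha_i$ to be strictly positive. The remaining point --- positivity of the Radon--Nikodym derivatives $\Phi_{\sigma_\lambda}$ for the edge prefixing maps --- is handled exactly as in Proposition~\ref{prop:example_exonevtwoe}: using Lemma~\ref{lemma-limit-RN}, one writes $\Phi_{\sigma_\lambda}(\xi)$ as the limit of the ratios $\mu_{2N}(\sigma_\lambda(Z(\lambda_N)))/\mu_{2N}(Z(\lambda_N))$, where the action of $\sigma_\lambda$ on a cylinder shifts the vertex-string by one step. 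Each such ratio is a finite product of factors of the form $\frac{1+\alpha_i'}{1+\alpha_i}$, and the logarithmic-series argument of Proposition~\ref{prop:example_exonevtwoe} shows the limit exists and is finite, positive, and nonzero precisely because $\sum_i |\delta_i^j| < \infty$ for each $j$.

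For the equivalence with the Perron--Frobenius measure $M$, I would invoke Kakutani's dichotomy (Corollary~1 of Section~10 of \cite{Kaku}) separately on each of the two copies of $\prod_{i\in\N}\Z_{2N}$ supplied by the isomorphism with $\Lambda_{2N}^\infty$. Under this isomorphism $M$ corresponds to the uniform product measure (all $\delta_i^j = 0$, giving weight $1/2N$ to each coordinate), and the relevant Kakutani series $\sum_i \sum_{j=1}^N \big(\sqrt{1/2N} - \sqrt{(1\pm\delta_i^j)/2N}\,\big)^2$ converges whenever $\sum_i |\delta_i^j| < \infty$, yielding $\mu_{2N} \sim M$. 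Finally, the unitary equivalence of representations is obtained verbatim from part (c) of Proposition~\ref{prop:example_exonevtwoe}: the map $W(f) = \sqrt{d\mu_{2N}/dM}\, f$ is a unitary $L^2(\Lambda_{2N}^\infty, \mu_{2N}) \to L^2(\Lambda_{2N}^\infty, M)$ intertwining the two $\Lambda$-semibranching representations.

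The step I expect to require the most care is the bookkeeping forced by the disjoint-union structure of $\Lambda_{2N}^\infty$ together with the $2N$-fold branching: unlike the single product space $\prod\{0,1\}$ of the one-vertex example, here one must track the source vertex along each square cylinder and run the product-measure and Kakutani arguments on both components $\{r(\xi)=v\}$ and $\{r(\xi)\neq v\}$ before recombining. A reassuring point, already noted in the construction, is that every path has a unique representative as an alternating vertex-string regardless of which admissible factorization rule is chosen, so both $\mu_{2N}$ and all of the above arguments are manifestly independent of the choice of factorization rule.
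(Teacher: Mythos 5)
Your proposal is correct and is precisely the argument the paper intends: the paper gives no separate proof of this proposition, saying only that it ``can be carried out in a similar fashion to the proof of Proposition~\ref{prop:example_exonevtwoe},'' and your three steps (Kolmogorov consistency of \eqref{eq:measure_2N} on square cylinders, positivity of the Radon--Nikodym limits via Lemma~\ref{lemma-limit-RN}, and Kakutani's dichotomy on each copy of $\prod_{i\in\N}\Z_{2N}$ followed by the multiplication unitary $W$) are exactly that adaptation. One caution on your closing remark: while the measure $\mu_{2N}$ is indeed independent of the factorization rule, the prefixing maps are not --- prefixing by an edge that breaks red--blue normal form and rewriting applies the permutation $\phi$ defining the factorization to the peripheral vertices (as made explicit in the proof of Proposition~\ref{prop:markov-2N}), so the vertex string is permuted as well as shifted and the \emph{values} of the Radon--Nikodym derivatives do depend on $\phi$; their positivity, which is all the proposition requires, follows from $\sum_i|\delta_i^j|<\infty$ exactly as you argue.
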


\subsection{Examples of probability measures on $\Lambda^\infty$ that are mutually singular with  the Perron--Frobenius measure}
\label{sec-Markov-measure-Lambda-semibran-0}


In this section, we will first recall the definition of Markov measure on the infinite path space of Cuntz algebras from \cite{dutkay-jorgensen-monic}, and then we
will apply this  first to the 2-graph $\Lambda$ of Example \ref{ex:non-cst-RN}, and then to the 2-graphs $\Lambda_{2N}$ which is a generalization of the 2-graph given in Example~\ref{ex:sbfs-3v8e}. Indeed   the infinite path spaces of these $2$-graphs are either homeomorphic to $\Lambda^\infty_{\mathcal{O}_N}$ or to a disjoint union of copies of $\Lambda^\infty_{\mathcal O_N}$, which makes our constructions possible.

\begin{defn}[Definition 3.1 of  \cite{dutkay-jorgensen-monic}]
		\label{def-Markov-measure} 
A {\em Markov measure} on the infinite path space  $\Lambda^\infty_{\mathcal{O}_N}$
\[
\Lambda^\infty_{\mathcal{O}_N}=\prod_{i=1}^\infty \Z_N=\{(i_1 i_2\dots)\,:\, i_n\in \Z_N,\;\; n=1,2,\dots\}.
\] of the Cuntz algebra  ${\mathcal{O}_N}$ is defined by a vector 
	$\lambda = (\lambda_0, \ldots, \lambda_{N-1}) $ and an $N \times N$ matrix $T$ such that $\lambda_i > 0$, $T_{i,j} > 0$ for all $i,j \in \Z_N,$ and if 
	$e = (1,1, \ldots ,1)^t$ then $\lambda T=\lambda $ and $Te=e$.
The Carath\'eodory/Kolmogorov extension theorem then implies that  there exists a unique   Borel measure $\mu$ on $\Lambda^\infty_{\mathcal{O}_N}$  extending the measure $\mu_{\mathcal{C}}$ defined on cylinder sets by
\begin{equation}
\mu_{\mathcal{C}} (Z(I)) : = \lambda_{i_1} T_{i_1,i_2} \cdots T_{i_{n-1},i_n},\text{ if }I = i_1 \ldots i_n.
\label{eq-def-markov-measu-Cuntz}
\end{equation}
The extension  $\mu$ is called a \emph{Markov measure}\footnote{For Markov measures in a more general context, see \cite{bezuglyi-jorgensen}.} on $\Lambda^\infty_{\mathcal{O}_N}$.
  \end{defn}

For $N=2,$  
fixed a number $ x \in (0,1)$, we can take  $T=T_x= \begin{pmatrix}
 x & (1-x) \\ (1-x)  & x 
\end{pmatrix}$, and  $\lambda=(1,1)$. The resulting measure will in this case be called $\mu_x$. 
Moreover, if $x \not= x'$, Theorem 3.9 of \cite{dutkay-jorgensen-monic} guarantees that $\mu_x, \mu_{x'}$ are mutually singular.

%
%

We will now  define Markov measures on the 2-graph  of Example \ref{ex:non-cst-RN},

\begin{prop} 
\label{prop-RN-of-Markov-2-example}
Let $\Lambda$ be the 2-graph given in Example \ref{ex:non-cst-RN}.
Fix a number $x \in (0,1)$, and let $\mu_x$ be Markov measure given by the $2\times 2$ matrix $T_x$ and the vector $\lambda=(1,1)$ as above.  As operators on $L^2(\Lambda^\infty, \mu_{x})$, the prefixing operators  $\sigma_e,\sigma_{f_1},\sigma_{f_2}$ have positive Radon--Nikodym derivatives at any point $z \in \Lambda^\infty$.
 Consequently, the standard prefixing and coding maps make $(\Lambda^\infty, \mu_x)$ into a $\Lambda$-semibranching function system.
\end{prop}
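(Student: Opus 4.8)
The plan is to carry out the same symbolic analysis used in the proof of Proposition~\ref{prop:example_exonevtwoe}, but now feeding in the Markov measure $\mu_x$ in place of the Kakutani product measure. Recall that every $\xi \in \Lambda^\infty$ is written uniquely as $\xi \equiv e g_1 e g_2 e g_3 \cdots$ with $g_i \in \{f_1, f_2\}$, so that identifying $f_1 \leftrightarrow 0$ and $f_2 \leftrightarrow 1$ gives a homeomorphism $\Lambda^\infty \cong \prod_{i=1}^\infty \Z_2 = \Lambda^\infty_{\mathcal{O}_2}$ under which the square cylinder set $Z(e g_1 \cdots e g_N)$ corresponds to $Z(g_1 \cdots g_N)$. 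Under this identification and with $\lambda = (1,1)$, Definition~\ref{def-Markov-measure} gives
\[ \mu_x(Z(e g_1 \cdots e g_N)) = \prod_{i=1}^{N-1} (T_x)_{g_i, g_{i+1}}. \]
The single structural fact that drives everything is that, writing $\overline{g}$ for the flip ($\overline{f_1} = f_2$, $\overline{f_2} = f_1$, i.e.\ addition by $1$ in $\Z_2$), the matrix $T_x$ satisfies $(T_x)_{\overline{i}, \overline{j}} = (T_x)_{i,j}$, since $\overline{i} = \overline{j} \iff i = j$; in other words $T_x$ is invariant under the simultaneous flip of both indices.

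Next I would determine the action of the three edge prefixing maps on symbol sequences using the factorization rules $f_1 e = e f_2$ and $f_2 e = e f_1$. The computation for $\sigma_{f_1}$ is exactly the one already carried out in Proposition~\ref{prop:example_exonevtwoe}, giving $\sigma_{f_1}(Z(\lambda_N)) = Z(e f_2\, e \overline{g_1} \cdots e \overline{g_N})$, where $\lambda_N = e g_1 \cdots e g_N$; the analogous computations give $\sigma_{f_2}(Z(\lambda_N)) = Z(e f_1\, e \overline{g_1} \cdots e \overline{g_N})$ and $\sigma_e(Z(\lambda_N)) = Z(e \overline{g_1} \cdots e \overline{g_N}\, e)$. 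The crucial feature is that in each case the entire tail of blue symbols is flipped, while at most one new leading blue symbol is inserted.

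I would then compute the finite-stage ratios $R_N(\xi) := \mu_x(\sigma_\bullet(Z(\lambda_N)))/\mu_x(Z(\lambda_N))$. Because the symmetry of $T_x$ yields $(T_x)_{\overline{g_i}, \overline{g_{i+1}}} = (T_x)_{g_i, g_{i+1}}$, the entire product over the flipped tail cancels against the denominator, leaving
\[ R_N^{f_1}(\xi) = (T_x)_{1, \overline{g_1}}, \qquad R_N^{f_2}(\xi) = (T_x)_{0, \overline{g_1}}, \qquad R_N^{e}(\xi) = 1, \]
each of which is independent of $N$. Setting $\mu_n = \mu_x|_{\mathcal{F}_n}$ and $\nu_n = (\mu_x \circ \sigma_\bullet)|_{\mathcal{F}_n}$ (both genuine projective systems, with $\nu_n \ll \mu_n$ since all entries of $T_x$ are positive), passing to the limit $N \to \infty$ and applying Lemma~\ref{lemma-limit-RN} identifies these constants with the Radon--Nikodym derivatives $\Phi_{f_1}, \Phi_{f_2}, \Phi_e$. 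Since $x \in (0,1)$ forces both $x > 0$ and $1-x > 0$, all three derivatives are strictly positive at every point. Finally, the maps $\sigma_\lambda, \sigma^n$ are continuous hence measurable, and $\mu_x(Z(v)) = 1 > 0$, so all hypotheses of Proposition~\ref{thm-lambda-sbfs-on-the-inf-path-space-via-a-measure} hold and the claimed $\Lambda$-semibranching function system follows.

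The step I expect to require the most care is the symbolic bookkeeping for the prefixing maps, especially $\sigma_e$, which prepends a red edge and must therefore be commuted past every blue symbol; this parallels the already-completed calculation in Proposition~\ref{prop:example_exonevtwoe}. Once the flip structure is correctly identified, however, the positivity is immediate from the symmetry of $T_x$, and in fact the argument is cleaner than in the Kakutani case: the ratios are locally constant in $N$, so none of the delicate infinite-product convergence analysis is needed here.
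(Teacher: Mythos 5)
Your proposal is correct and follows essentially the same route as the paper's own proof: the paper likewise transports $\mu_x$ through the identification $\Lambda^\infty \cong \prod_{i\in\N}\{0,1\}$, uses the factorization rules to see that prefixing flips the tail of blue symbols, invokes Lemma~\ref{lemma-limit-RN} to realize the Radon--Nikodym derivatives as limits of cylinder-measure ratios, and uses the flip-invariance $T_x(i,j)=T_x(i-1,j-1)$ to obtain exactly your values $\Phi_e \equiv 1$ and $\Phi_{f_j} = T_x(j+1, i_1+1)$ before concluding via Proposition~\ref{thm-lambda-sbfs-on-the-inf-path-space-via-a-measure}. The only detail you gloss over (and which the paper spells out) is that the degree-$(N,N+1)$ and degree-$(N+1,N)$ cylinders arising from prefixing have the same measure as their square completions, which follows from additivity and the row sums of $T_x$ being $1$.
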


\begin{proof} 
Recall from the proof of Proposition \ref{prop:prod-meas-onevtwoe} the homeomorphism between $\Lambda^\infty$ and $\prod_{i=1}^\infty [\{0,1\}]_i$, given by $\prod_{i\in \N} [\{0,1\}]_i \ni (a_i)_{i\in \N} \mapsto e f_{a_1+1} e f_{a_2+1} e \cdots\in \Lambda^\infty$.  Thus, the Markov measure $\mu_x$ can be viewed as a measure on $\Lambda^\infty$ which satisfies 
\[ \mu_x(Z(e f_i e f_j)) = T_{i, j} = \begin{cases}
x, & i = j \\ 1-x, & i \not= j.
\end{cases} \]
Since $\mu_x$ is a Borel measure and the standard coding and prefixing maps $\sigma^n, \sigma_\lambda$ are local homeomorphisms, they are $\mu_x$-measurable for any $x \in (0,1)$.  

Thus, once we show that the  prefixing operators $\sigma_e,\sigma_{f_1},\sigma_{f_2}$ have positive Radon--Nikodym derivatives (for which we use Lemma~\ref{lemma-limit-RN}), Proposition~\ref{thm-lambda-sbfs-on-the-inf-path-space-via-a-measure} tells us that the standard prefixing and coding maps constitute a $\Lambda$-semibranching function system on $(\Lambda^\infty, \mu_x)$.

Thus, fix $z = e f_{i_1} ef_{i_2}ef_{i_3} \ldots \in \Lambda^\infty$,
and a sequence $(z_n)_n$ of finite paths 
\[
z_n:=e f_{i_1}ef_{i_2}ef_{i_3}\ldots f_{i_n}
\]
such that $z = \bigcap_{n\in \N} Z(z_n)$.  By Lemma \ref{lemma-limit-RN}, for any finite path $g \in \Lambda$, we have 

\[
\frac{d(\mu_x \circ \sigma_g)}{d\mu_x}(z) = \lim_{n\to \infty} \frac{ \mu_{x}(   Z( gz_n  ) ) } {\mu_{x} (Z(z_n ))}.
\]
If we take $g=e$, the factorization rules $e f_i = f_{i-1} e$, for $i= 1,2$, imply that
\[
\lim_n \frac{ \mu_{x}(   Z( gz_n)   ) } {\mu_{x} (Z(z_n) )}=\lim_n \frac{ \mu_{x}(  Z(e f_{i_1-1}e f_{i_2-1}\ldots f_{i_n-1} e) ) } {\mu_{x}( Z(  e f_{i_1}e f_{i_2} \ldots f_{i_n})  ) }=\lim_n \frac{T_{i_1-1,i_2-1}\cdots T_{i_{n-1}-1,i_n-1}} {T_{i_1,i_2} \cdots T_{i_{n-1},i_n}} ,
\]
since $Z(e f_{i_1-1}e f_{i_2-1}\ldots f_{i_n-1} e ) = Z(e f_{i_1-1}e f_{i_2-1}\ldots f_{i_n-1} e f_1  ) \sqcup Z(e f_{i_1-1}e f_{i_2-1}\ldots f_{i_n-1} e f_2)$ has 
\[\mu_x (Z(e f_{i_1-1}e f_{i_2-1}\ldots e f_{i_n-1} e) ) = \mu_x Z(e f_{i_1-1}e f_{i_2-1}\ldots e f_{i_n-1} ) ).\]

Now, observe that for any $i,j \in \Z/2\Z$ we have $T_{i,j} = T_{i-1, j-1}$.  It follows that 
\[ \frac{d(\mu_x \circ \sigma_e)}{d\mu_x}(z) = \lim_{n \to \infty} \frac{T_{i_1-1,i_2-1}\cdots T_{i_{n-1}-1,i_n-1}} {T_{i_1,i_2}\cdots T_{i_{n-1},i_n}} = 1.\]

Similarly, for $j= 1, 2,$  by Lemma 	\ref{lemma-limit-RN},  the Radon--Nikodym derivative
\[
\frac{d(\mu_x \circ \sigma_{f_j})}{d\mu_x}(z) = \lim_{n\to \infty} \frac{ \mu_{x}(   Z( f_j z_n  ) ) } {\mu_{x} (Z(z_n ))} = \lim_{n\to \infty} \frac{T_{j+1, i_1+1} T_{i_1 +1, i_2 + 1} \cdots T_{i_{n-1} + 1, i_n +1}}{T_{i_1, i_2} \cdots T_{i_{n-1}, i_n}} = T_{j+1, i_1 +1}
\]
is positive (indeed, constant on each cylinder set $Z(e f_i)$ for $i = 1, 2$).
\end{proof}

Now recall the 2-graph $\Lambda_{2N}$ from the previous section with the skeleton given in \eqref{eq:Lambda2N-skeleton}.
As described before, $\Lambda^\infty_{2N}$ is isomorphic to $\prod_{i\in \N} \Z_{2N} \sqcup \prod_{i \in \N} \Z_{2N}$.
Also observe that a choice of factorization  on $\Lambda_{2N}$ is equivalent to choosing a permutation $\phi$ of $\{1, \ldots, 2N\}$ such that the red-blue path $(v, Q_i, v)$ equals the blue-red path $(v, Q_{\phi(i)}, v)$.
Having specified such a permutation $\phi$, suppose $\phi$ consists of $d$ cycles; write $c_j$ for the smallest entry in the $j$th cycle.

 Fix $d$ vectors $\{x^j\in \R^{2N}: 0<x_i^j<1\;\;\text{for}\;\; 1\le i\le 2N\}_{j=1}^d$ such that $\sum_{i=1}^{2N} x^j_i = 1$ for each $j$, and define $T_x$ to be the  $2 N \times 2N$ matrix with entries from $(0,1)$ such that 
 \[ T_x(i, j) = x^m_{\phi^{n-1}(j)} \ \text{ if } i = \phi^{n-1}(c_m).\]
By construction, we have $T_x(i,j) = T_x(\phi(i), \phi(j))$ for all $1 \leq i, j \leq 2N$.  Moreover,  the fact that all rows of $T$ sum to 1 implies that $(T, ( 1, 1, \ldots, 1)^T)$ satisfies the conditions given in Definition 3.1 of \cite{dutkay-jorgensen-monic}. Therefore, we have a Markov measure $\mu_x$  associated to $T$ as follows.

\begin{prop}\label{prop:markov-2N}
Let $\Lambda_{2N}$ be a 2-graph with skeleton  \eqref{eq:Lambda2N-skeleton} and factorization rule determined by the permutation $\phi \in S_{2N}$.  For each matrix $T_x$ as above, write $\mu_x$ for the associated measure  on $\Lambda_{2N}^\infty \cong \prod_{i\in \N} \Z_{2N} \sqcup \prod_{i\in \N} \Z_{2N}$, given on a cylinder set in either copy of $\prod_{i\in \N} \Z_{2N} $ by 
\[ \mu_x(Z(a_1 \cdots a_n)) = \prod_{i=1}^{n-1} T_x(a_i, a_{i+1}). \]
Then the standard prefixing and coding maps make $(\Lambda_{2N}^\infty, \mu_x)$ into a $\Lambda$-semibranching function system.  If the vectors $x^m$ are not all constant, $\mu_x$ is mutually singular with respect to the measure $M$ of Equation \eqref{eq:M}.
\end{prop}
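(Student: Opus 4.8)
The plan is to establish the two assertions separately: first that the standard prefixing and coding maps make $(\Lambda_{2N}^\infty, \mu_x)$ into a $\Lambda$-semibranching function system, and then the mutual singularity of $\mu_x$ and $M$ when the vectors $x^m$ are not all constant. For the first assertion I would verify the three hypotheses of Proposition~\ref{thm-lambda-sbfs-on-the-inf-path-space-via-a-measure}, following the template of Proposition~\ref{prop-RN-of-Markov-2-example}. Since $\mu_x$ is a Borel measure (obtained from Definition~\ref{def-Markov-measure} via the Carath\'eodory/Kolmogorov extension theorem) and the standard maps $\sigma_\lambda, \sigma^n$ are local homeomorphisms, hypothesis (a) is immediate; and because every entry of $T_x$ lies in $(0,1)$, the formula $\mu_x(Z(a_1 \cdots a_n)) = \prod_{i=1}^{n-1} T_x(a_i, a_{i+1})$ forces $\mu_x(Z(w)) > 0$ for every vertex $w$, giving hypothesis (b).

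The substance of the first part is hypothesis (c): positivity of the Radon--Nikodym derivatives $\Phi_{\sigma_g}$ for each edge $g$. As in Proposition~\ref{prop-RN-of-Markov-2-example}, I would fix $z \in \Lambda_{2N}^\infty$, approximate it by the cylinder sets $Z(z_n)$ of its truncations, and compute $\Phi_{\sigma_g}(z) = \lim_n \mu_x(Z(g z_n))/\mu_x(Z(z_n))$ using Lemma~\ref{lemma-limit-RN}. Prepending an edge forces one to apply the factorization rule to rewrite the resulting alternating word back into standard form; by the definition of $\phi$ this relabels the entire vertex string $(a_1, a_2, \ldots)$ of the path by the permutation $\phi$ (or its inverse). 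The crucial point is the invariance $T_x(i,j) = T_x(\phi(i), \phi(j))$ built into the cycle-based construction of $T_x$: it makes the limiting ratio a telescoping product of factors $T_x(\phi(a_i), \phi(a_{i+1}))/T_x(a_i, a_{i+1}) = 1$, so the bulk of the product collapses. What survives is a single boundary factor, which equals $1$ for the red prefixing map and a single transition probability $T_x(\cdot, a_1)$ for the blue ones; in either case the derivative is a finite positive constant on each cylinder set, exactly generalizing the identity $T_{i,j} = T_{i-1,j-1}$ used in the $N=1$ case. With (a)--(c) in hand, Proposition~\ref{thm-lambda-sbfs-on-the-inf-path-space-via-a-measure} yields the $\Lambda$-semibranching function system.

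For the mutual singularity statement I would first identify the Perron--Frobenius measure $M$ of \eqref{eq:M}, restricted to $\Lambda_{2N}^\infty \cong \prod_{i\in\N}\Z_{2N} \sqcup \prod_{i\in\N}\Z_{2N}$, as itself the Markov measure $\mu_{x_0}$ corresponding to the uniform choice $x^m_i \equiv 1/(2N)$, i.e. the transition matrix $T_0$ all of whose entries equal $1/(2N)$. This requires computing the vertex matrices of $\Lambda_{2N}$ (they are the star-graph adjacency matrix, so $\rho_1 = \rho_2 = \sqrt{2N}$) and checking that the resulting $M$ has the uniform Markov form. Since both $T_x$ and $T_0$ are strictly positive, and $T_x \neq T_0$ exactly when some $x^m$ is non-constant, I would then invoke the mutual singularity of distinct Markov measures (Theorem 3.9 of \cite{dutkay-jorgensen-monic}) on each of the two copies of $\prod_{i\in\N}\Z_{2N}$ and combine the conclusions over the disjoint union to obtain $\mu_x \perp M$. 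The mechanism underlying this is the strong law of large numbers: $\mu_x$-almost every path has empirical transition frequencies governed by $T_x$, while $M$-almost every path has the uniform frequencies of $T_0$, and these tail-measurable conditions are incompatible when $T_x \neq T_0$.

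The main obstacle I anticipate is the bookkeeping surrounding the permutation $\phi$ in the red-edge Radon--Nikodym computation: one must track precisely how the factorization rule relabels the entire vertex string of an infinite path, across the two copies of $\prod_{i\in\N}\Z_{2N}$, and confirm that the cycle-based definition of $T_x$ really does yield the invariance $T_x(i,j) = T_x(\phi(i),\phi(j))$ needed for the telescoping product to collapse. A secondary technical point is verifying that $M$ genuinely coincides with the uniform Markov measure $\mu_{x_0}$ under the stated identification, in particular reconciling the boundary eigenvector factor $\kappa_{s(\lambda)}$ appearing in \eqref{eq:M} with the purely multiplicative transition form of $\mu_x$. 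Once these are settled, both parts follow the established templates.
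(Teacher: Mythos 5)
Your proposal is correct and takes essentially the same route as the paper: for the semibranching assertion, the paper also verifies the hypotheses of Proposition~\ref{thm-lambda-sbfs-on-the-inf-path-space-via-a-measure} by computing each edge's Radon--Nikodym derivative as a limit of cylinder-set ratios via Lemma~\ref{lemma-limit-RN}, with the invariance $T_x(i,j)=T_x(\phi(i),\phi(j))$ collapsing the telescoping product; for mutual singularity, it likewise observes that $M$, depending only on degree, is a rescaling of the Markov measure with $x^m\equiv(\tfrac{1}{2N},\dots,\tfrac{1}{2N})$ and then cites Theorem~3.9 of \cite{dutkay-jorgensen-monic}.

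One intermediate claim in your sketch is misstated, though harmlessly: in $\Lambda_{2N}$ the dichotomy between ``derivative $=1$'' and ``derivative $=$ a single transition probability'' is governed by the direction of the edge relative to the central vertex $v$, not by its color. In the paper's computation, an edge of \emph{either} color with range $Q_i$ (hence source $v$), prefixed to a path with range $v$, yields $T_x(i,\phi(b_1))$ (red) or $T_x(i,b_1)$ (blue), while an edge of either color with range $v$, prefixed to a path with peripheral range, yields $1$ (for the blue such edge this again uses the $\phi$-invariance; for the red one the vertex string is unchanged). Your red/blue split is the correct pattern for the one-vertex $2$-graph of Example~\ref{ex:non-cst-RN} treated in Proposition~\ref{prop-RN-of-Markov-2-example}, but it does not transfer verbatim to the star-shaped skeleton \eqref{eq:Lambda2N-skeleton}. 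Since all four cases produce positive constants on cylinder sets, hypothesis (c) holds either way and your conclusion is unaffected.
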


\begin{proof}
As above, we merely need to check the Radon--Nikodym derivatives by using Lemma 	\ref{lemma-limit-RN}.  Fix a red edge $e$ with range $Q_i$, and fix a point $\xi \equiv (v, Q_{b_1}, v, Q_{b_2}, \ldots) \in \Lambda^\infty$ (with a  red edge listed first).  Then, 
\begin{align*}
\frac{d(\mu_x \circ \sigma_e)}{d\mu_x}(\xi) &= \lim_{n \to \infty} \frac{\mu_x \circ \sigma_e(Z(v, Q_{b_1}, \ldots, Q_{b_n}, v))}{\mu_x(Z(v, Q_{b_1}, \ldots, Q_{b_n}, v))}\\
&= \lim_{n\to \infty} \frac{\mu_x(Z(Q_i, v, Q_{\phi(b_1)}, v, \ldots, Q_{\phi(b_n)}, v))}{\mu_x(Z(v, Q_{b_1}, \ldots, Q_{b_n}, v))}\\
&= \lim_{n \to \infty}\frac{ T_x(i, \phi(b_1)) \prod_{i=1}^{n-1} T_x(\phi(b_i), \phi(b_{i+1}))}{\prod_{i=1}^{n-1} T_x(b_i, b_{i+1})} \\
&= T_x(i, \phi(b_1)).
\end{align*}
Similarly, if we choose a blue edge $f$  with range $Q_i$,  we calculate:
\[
\frac{d(\mu_x \circ \sigma_f)}{d\mu_x}(\xi)= T_x(i, b_1).
\]
On the other hand, if $\zeta \equiv (Q_{a_1}, v, Q_{a_2}, v, \ldots)$ is an infinite path and $g$ is a blue edge with source $Q_{a_1}$ and range $v$, prefixing $\zeta$ by $g$ and rewriting the result as a sequence of red-blue edges gives $g \zeta \equiv (v, Q_{\phi(a_1)}, v, Q_{\phi(a_2)}, \ldots)$.  It follows by a calculation that 
\[
\frac{d(\mu_x \circ \sigma_g)}{d\mu_x}(\zeta) = \lim_{n \to \infty} \frac{\mu_x \circ \sigma_g(Z(Q_{a_1}, \ldots, Q_{a_n}))}{\mu_x(Z(Q_{a_1}, \ldots, Q_{a_n})))}= 1.
\]
Similarly, for any red edge $h$ with source $Q_{a_1}$, the fact that rewriting $\zeta$ as a blue-red path doesn't change the sequence of vertices it passes through means that $\frac{d(\mu_x \circ \sigma_h)}{d\mu_x}(\zeta) = 1$.

Since all the Radon--Nikodym derivatives are positive, we obtain a $\Lambda$-semibranching function system as claimed.

For the final assertion, one simply observes that since the formula for $M(Z(\lambda))$ only depends on the degree (length) of $\lambda$, it is a rescaling of the measure $\mu_x$ corresponding to the choice $x^m = (\frac{1}{2N}, \ldots, \frac{1}{2N})$ for all $m$.  Thus, if any of the vectors $x^m$ are not constant, Theorem 3.9 of \cite{dutkay-jorgensen-monic} implies that $\mu_x$ is mutually singular with respect to $M$.
\end{proof}

\begin{rmk}
The measure $\mu_x$ used in Proposition \ref{prop:markov-2N} above could equally well be defined for any  2-graph $\Lambda_{2N+1}$ with one central vertex and $2N+1$ peripheral vertices, each connected to the center vertex as in \eqref{eq:Lambda2N-skeleton}.  This is because any such 2-graph (equivalently, any factorization rule for this skeleton) is determined by a permutation of the outer $2N+1$ vertices.  The  conclusions of Proposition \ref{prop:markov-2N} above regarding when $\mu_x$ and $M$ are mutually singular also hold in this context. 
\end{rmk}

\begin{rmk}
  It is worth noting that, as observed in \cite{FGKPexcursions}, the infinite path space $\Lambda^\infty$ of a finite $k$-graph $\Lambda$  with vertex matrices $A_i,$ $i=1,\dots, k,$  is always naturally homeomorphic and Borel isomorphic to the infinite path space of { the Cuntz--Krieger algebra ${\mathcal O}_{A^J}$}.  Here $J \in (\Z_+)^k$ and $A^J$ denotes the matrix $A_1^{J_1} \cdots A_k^{J_k}$. Therefore one can transport to
$\Lambda^\infty$ any of the Markov measures {for Cuntz--Krieger algebras} that were constructed by S. Bezugyli and P. Jorgensen in \cite{bezuglyi-jorgensen}. However, these Markov measures
will not necessarily produce a $\Lambda$-semibranching function system on $\Lambda^\infty$, since {the homeomorphism of \cite{FGKPexcursions} gives no information
on the Radon--Nikodym derivatives with respect to the standard prefixing operators by edges, namely 
${\sigma_e}$, $e \in v\Lambda^{e_i}$.  }

\end{rmk}

\section{A separable faithful  representation of $C^*(\Lambda)$}
\label{sec:faithful-rep}

In Theorem~3.6 of \cite{FGKP}, the authors constructed a representation of $C^*(\Lambda)$ associated to the standard $\Lambda$-semibranching function system described in Remark~\ref{rmk:defn-standard-repn}, and proved that this representation is faithful if and only if $\Lambda$ is aperiodic.
In this section, we first construct in Proposition~\ref{prop:sc-faithful}  a separable representation for $C^*(\Lambda)$ for $\Lambda$ row-finite and strongly connected, which arises from a $\Lambda$-semibranching function system 
that is faithful even when $\Lambda$ is not necessarily aperiodic. Then in Proposition  \ref{pr:sep-faith}  we extend this result to $k$-graphs that are row-finite but not necessarily strongly connected.

{The underlying Hilbert space $\H_x$ of the representation of Proposition~\ref{prop:sc-faithful} is defined via an inductive limit, but we show in Proposition \ref{prop:sc-faithful-SBFS} that $\H_x \cong \ell^2(X)$ for a discrete measure space $X$ (with counting measure). }
 This perspective enables us to realize the representations of Proposition~\ref{prop:sc-faithful} and Proposition \ref{pr:sep-faith} as  $\Lambda$-semibranching representations.  Incidentally, the same arguments used in Proposition \ref{prop:sc-faithful-SBFS} also enable us to show in Proposition \ref{prop:sc-faithful-SBFS-Lambda} that the standard representation of $C^*(\Lambda)$ on $\ell^2(\Lambda^\infty)$ is a $\Lambda$-semibranching representation, although not a separable one.

Let $\Lambda$ be a strongly connected $k$-graph.  Fix $x \in \Lambda^\infty$  and write $x = x_1 x_2 x_3 \cdots $, where $d(x_i) = (1, 1, \ldots, 1)$ for all $i$.  Let $v_i = r(x_i)$.
For each $i$, write $F_i = \Lambda v_i$ for the set of all morphisms (i.e., finite paths) in $\Lambda$ with source $v_i$.  Then $\ell^2(F_i)$ has basis $\{\xi^i_\lambda: \lambda \in F_i\}$.  Define  $\rho_i \in B(\ell^2(F_i) , \ell^2(F_{i+1}))$ by $\rho_i(\xi^i_\lambda) = \xi^{i+1}_{\lambda x_i} \in \ell^2(F_{i+1})$, and form the inductive limit Hilbert space
\begin{equation}
\label{inductiveHilbert}
\H_x := \varinjlim (\ell^2(F_i), \rho_i) = \left(\bigsqcup_{i \in \N} \ell^2(F_i) \right) /\sim,
\end{equation}
where $\xi^i_\lambda \sim \xi^j_\mu$ (with $i \leq j$) iff $\mu = \lambda x_i x_{i+1} \cdots x_{j-1}$.
For a generator $\xi^i_\lambda$ of $\ell^2(F_i)$, we will denote its equivalence class in $\H_x$ by $[\xi^i_\lambda]$.

Observe that $\H_x$ is separable, because $F_i$ is countable for all $i$. 
Moreover, the same $\lambda \in \Lambda$ may appear in both $F_i$ and $F_j$ without having $[\xi^i_\lambda] = [\xi^j_\lambda]$, if the infinite path $x$ passes through the same vertex multiple times.

For any fixed $\lambda  \in \Lambda,$ we define an operator $T_\lambda \in B(\H_x)$ by 
\begin{equation}
T_\lambda [\xi^i_\mu] = \left\{ \begin{array}{cl}
[\xi^i_{\lambda \mu}], & s(\lambda) = r(\mu) \\
0, & \text{ else.}
\end{array} \right. \label{eq:sc-faithful}
\end{equation}

\begin{prop}\label{prop:faithful-repn}
Let $\Lambda$ be a row--finite, strongly connected $k$-graph and $x \in \Lambda^\infty$.  The operators $\{T_\lambda\}_{\lambda \in \Lambda}$ of Equation \eqref{eq:sc-faithful} define a faithful separable representation $\pi_x$ of $C^*(\Lambda)$ on $\H_x$.
\label{prop:sc-faithful}
\end{prop}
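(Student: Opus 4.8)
The plan is to verify in turn that the $T_\lambda$ are well-defined bounded operators on $\H_x$, that the family $\{T_\lambda\}_{\lambda\in\Lambda}$ satisfies the Cuntz--Krieger relations (CK1)--(CK4) and hence induces a representation $\pi_x$ of $C^*(\Lambda)$ by the universal property, and finally that $\pi_x$ is faithful via the gauge-invariant uniqueness theorem of \cite{KP}. Separability is immediate, since $\H_x$ is separable.

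For well-definedness I would first note that on each $\ell^2(F_i)$ the formula \eqref{eq:sc-faithful} defines a partial isometry $T_\lambda^{(i)}$: the assignment $\mu\mapsto\lambda\mu$ on $\{\mu\in F_i: r(\mu)=s(\lambda)\}$ is injective by the factorization property, so $T_\lambda^{(i)}$ carries an orthonormal subset to an orthonormal subset and annihilates its complement. A direct check shows $\rho_i\circ T_\lambda^{(i)} = T_\lambda^{(i+1)}\circ\rho_i$ (both sides send $\xi^i_\mu$ either to $\xi^{i+1}_{\lambda\mu x_i}$ or to $0$), so the $T_\lambda^{(i)}$ are compatible with the connecting maps and descend to a contraction $T_\lambda$ on $\H_x$; the same compatibility, applied to adjoints, shows that $T_\lambda^*$ is also computed level-wise, with $T_\lambda^*[\xi^i_\nu] = [\xi^i_\mu]$ when $\nu=\lambda\mu$ and $0$ otherwise.

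Relations (CK1)--(CK3) are then routine level-wise computations: $T_v$ is the projection onto $\overline{\operatorname{span}}\{[\xi^i_\mu]:r(\mu)=v\}$, giving mutually orthogonal projections, while (CK2) and (CK3) follow from $T_\lambda^{(i)}T_\eta^{(i)}=T_{\lambda\eta}^{(i)}$ and $(T_\lambda^{(i)})^*T_\lambda^{(i)}=T_{s(\lambda)}^{(i)}$. The essential point is (CK4), which in fact fails at each finite level: for a short path $\nu$ with $r(\nu)=v$ but $d(\nu)\not\ge n$, one has $T_v[\xi^i_\nu]=[\xi^i_\nu]$ while $\sum_{\lambda\in v\Lambda^n}T_\lambda T_\lambda^*[\xi^i_\nu]=0$. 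The inductive limit along $x$ repairs this: representing $[\xi^i_\nu]=[\xi^j_{\nu'}]$ with $\nu'=\nu x_i\cdots x_{j-1}$ for $j$ large enough that $d(\nu')\ge n$ (possible since each $x_\ell$ has degree $(1,\dots,1)$), there is a unique $\lambda_0\in v\Lambda^n$ with $\nu'(0,n)=\lambda_0$, whence $\sum_{\lambda\in v\Lambda^n}T_\lambda T_\lambda^*[\xi^j_{\nu'}]=T_{\lambda_0}T_{\lambda_0}^*[\xi^j_{\nu'}]=[\xi^j_{\nu'}]=[\xi^i_\nu]$. Row-finiteness guarantees this is a finite sum, and checking equality on the spanning vectors $[\xi^i_\nu]$ yields (CK4).

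For faithfulness I would invoke the gauge-invariant uniqueness theorem for row-finite source-free $k$-graphs (strong connectivity implies $\Lambda$ has no sources), which requires $T_v\neq 0$ for all $v$ together with a gauge-compatible $\T^k$-action on $C^*(\{T_\lambda\})$. Strong connectivity supplies a path $\mu\in v\Lambda v_i$, so $T_v[\xi^i_\mu]=[\xi^i_\mu]\neq 0$. To build the gauge action I would define unitaries $U_z$ on $\H_x$ by $U_z[\xi^i_\mu]=z^{d(\mu)-(i-1)(1,\dots,1)}[\xi^i_\mu]$; the level-dependent twist $z^{-(i-1)(1,\dots,1)}$ is forced by, and exactly compensates for, the degree shift $d(\mu x_i)=d(\mu)+(1,\dots,1)$ introduced by $\rho_i$, so the $U_z$ descend to strongly continuous unitaries on $\H_x$. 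A short computation then gives $U_z T_\lambda U_z^* = z^{d(\lambda)}T_\lambda$, so $\beta_z:=\operatorname{Ad}(U_z)$ is a continuous action of $\T^k$ on $C^*(\{T_\lambda\})$ intertwining the gauge action $\gamma$, i.e. $\beta_z\circ\pi_x=\pi_x\circ\gamma_z$. The gauge-invariant uniqueness theorem then forces $\pi_x$ to be faithful. I expect the two inductive-limit phenomena to be the main obstacles: recognizing that (CK4) holds only after passing to the limit along the fixed infinite path $x$, and finding the correct level-dependent normalization of the gauge unitaries $U_z$ so that they are compatible with the connecting maps $\rho_i$.
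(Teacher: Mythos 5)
Your overall route is the same as the paper's: verify the Cuntz--Krieger relations for the $T_\lambda$, exhibit a gauge action via unitaries $U_z$ (your normalization $z^{d(\mu)-(i-1)(1,\dots,1)}$ differs from the paper's $z^{d(\mu)-(i,\dots,i)}$ only by the scalar $z^{(1,\dots,1)}$, which is harmless since only $\Ad U_z$ is used), observe $T_v\neq 0$ from strong connectivity, and conclude by the gauge-invariant uniqueness theorem; your intertwining check $\rho_i\circ T_\lambda^{(i)}=T_\lambda^{(i+1)}\circ\rho_i$ is a clean way to get well-definedness. However, there is a genuine error at the center of your argument: the claim that ``the same compatibility, applied to adjoints, shows that $T_\lambda^*$ is computed level-wise.'' Taking adjoints of $\rho_i T_\lambda^{(i)}=T_\lambda^{(i+1)}\rho_i$ yields $(T_\lambda^{(i)})^*\rho_i^*=\rho_i^*(T_\lambda^{(i+1)})^*$, an intertwining relation with the coisometries $\rho_i^*$, not with the isometries $\rho_i$; the level-wise adjoints do not commute with the connecting maps, hence do not descend to any operator on $\H_x$. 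Concretely, for the $1$-graph with one vertex and one loop $f$ one has $[\xi^i_v]=[\xi^{i+1}_f]$, so $T_f^*[\xi^i_v]=[\xi^{i+1}_v]\neq 0$, whereas your formula returns $0$.

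This error is exactly what produces the internal inconsistency in your (CK4) discussion. You evaluate the single operator $\sum_{\lambda\in v\Lambda^n}T_\lambda T_\lambda^*$ on the single vector $[\xi^i_\nu]=[\xi^j_{\nu'}]$ and obtain $0$ using the level-$i$ representative but $[\xi^i_\nu]$ using the level-$j$ representative; an operator on $\H_x$ cannot distinguish representatives of the same class, so this is not ``failure at each finite level repaired by the limit'' --- it is a proof by contradiction that the level-wise adjoint formula is false. The correct adjoint, which the paper computes directly from $\langle T_\lambda^*[\xi^i_\mu]\mid[\xi^j_\nu]\rangle=\langle[\xi^i_\mu]\mid T_\lambda[\xi^j_\nu]\rangle$ (see Equation \eqref{eq:T_adjoint}), is $T_\lambda^*[\xi^i_\mu]=[\xi^j_\nu]$ whenever the class of $\xi^i_\mu$ admits some representative of the form $\xi^j_{\lambda\nu}$, and $0$ only if no representative at any level factors through $\lambda$. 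With that formula, your level-$j$ computation --- pass to $j$ with $d(\nu x_i\cdots x_{j-1})\geq n$ and use uniqueness of the factorization $\nu x_i\cdots x_{j-1}=\lambda_0\mu$ with $\lambda_0\in v\Lambda^n$ --- is precisely the paper's proof that $\sum_{\lambda\in v\Lambda^n}T_\lambda T_\lambda^*=T_v$, and the level-$i$ computation is simply discarded as resting on a false premise. You should also recheck (CK2)--(CK3) against the correct adjoint (they do hold, e.g. $T_\lambda^*T_\lambda[\xi^i_\mu]=T_\lambda^*[\xi^i_{\lambda\mu}]=[\xi^i_\mu]$ when $r(\mu)=s(\lambda)$), since your justification via level-wise adjoints no longer applies as stated. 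The remainder of your proof (gauge action, $T_v\neq 0$, separability) is correct and agrees with the paper.
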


\begin{proof}
This proof was inspired by Section 3 of \cite{davidson-power-yang-dilation}.

We first check that the operators $T_\lambda$ are well-defined.  Recall, then,
that if $[\xi^i_\nu] = [\xi^j_\mu]\in \H_x$, then there exists $k \geq i,j$ such that $\mu x_j \cdots x_k = \nu x_i \cdots x_k$. Assuming $i \leq  j$, the factorization property then forces $\mu = \nu x_i \cdots x_{j-1}$ (if $i < j$; if $i=j$, we have $\mu = \nu$). In either case, $[\xi^j_{\lambda \nu}]  = [\xi^i_{\lambda \mu}]$, and hence $T_\lambda$ is well defined.

We now check that the operators $\{T_\lambda\}_{\lambda \in \Lambda}$ define a representation of $C^*(\Lambda)$.  To that end, observe that
\[\begin{split}
\langle T^*_\lambda [\xi_\mu^i] \mid [\xi_\nu^j]\rangle &=\langle [\xi_\mu^i] \mid T_\lambda [\xi_\nu^j]\rangle\\
&=\begin{cases}\langle [\xi_\mu^i]\mid [\xi_{\lambda\nu}^j]\rangle\quad\text{if $s(\lambda)=r(\nu)$}\\0\quad\quad\quad\text{otherwise}\end{cases}\\
&=\begin{cases} 1\quad\text{if $[\xi_\mu^i]=[\xi_{\lambda\nu}^j]$ and $s(\lambda)=r(\nu)$}\\ 0\quad\quad\quad\text{otherwise}.\end{cases}
\end{split}\]
Thus 
\begin{equation}\label{eq:T_adjoint}
T^*_\lambda[\xi_\mu^i]=\begin{cases} [\xi_\nu^j]\quad\quad\text{if $[\xi_\mu^i]=[\xi_{\lambda\nu}^j]$}\\
0\quad\quad\quad\text{otherwise}\end{cases}
\end{equation}

One checks immediately that for any $v, w \in \Lambda^0$, $T_v = T_v^* = T_v^2$  and  $T_v T_w = \delta_{v,w} T_v$. A similarly straightforward check shows that $T_\lambda^* T_\lambda = T_{s(\lambda)}$ and that $T_\lambda T_\mu = \delta_{s(\lambda), r(\mu)} T_{\lambda \mu}$.

It remains to check that for any $n \in \N^k, v \in \Lambda^0$, we have $\sum_{\lambda \in v\Lambda^n}T_\lambda T_\lambda^* = T_v$.  To that end, 
 fix $\lambda$ and $[\xi_\mu^i]$, and  compute 
\[\begin{split}
T_\lambda T^*_\lambda[\xi_\mu^i]&=\begin{cases} T_\lambda[\xi_\nu^j]\quad\quad\text{if $[\xi_\mu^i]=[\xi_{\lambda\nu}^j]$}\\ 0 \quad\quad\quad\text{otherwise}\end{cases}\\
&=\begin{cases} [\xi_{\lambda\nu}^j]\quad\quad\text{if $[\xi_\mu^i]=[\xi_{\lambda\nu}^j]$, $s(\lambda)=r(\nu)$}\\
0\quad\quad\quad\text{otherwise}\end{cases}\\
&=\begin{cases} [\xi_\mu^i]\quad\quad\text{if $[\xi_\mu^i]=[\xi_{\lambda\nu}^j]$ and $s(\lambda)=r(\nu)$}\\ 0\quad\quad\text{otherwise}\end{cases}
\end{split}\]
Now, fix $n \in \N^k$ and $v \in \Lambda^0$.  Observe that 
\[\begin{split}
\Big( \sum_{\lambda\in v \Lambda^n} T_\lambda T_\lambda^* \Big) [\xi^i_\mu]&=\begin{cases} \sum_{\lambda\in v \Lambda^n} [\xi^i_\mu]\quad\quad\text{if $[\xi_\mu^i]=[\xi_{\lambda\nu}^j]$ and $s(\lambda)=r(\nu)$}\\
0\quad\quad\text{otherwise.}\end{cases}\\
\end{split}\]
If $r(\mu) = v$, then choose $k>i$ large enough so that $d(\mu x_i \cdots x_{k-1}) \geq n$.  Then, $[\xi^k_{\mu x_i \cdots x_{k-1}}] = [\xi^i_\mu]$, and the factorization property tells us we can write $\mu x_i \cdots x_{k-1} = \lambda \nu$ for a unique $\lambda \in v\Lambda^n$.  Thus,
\[\begin{split}
\Big( \sum_{\lambda\in v \Lambda^n} T_\lambda T_\lambda^* \Big) [\xi^i_\mu]&=\begin{cases}[\xi^i_\mu]\quad\quad\text{if } r(\mu) = v\\
0\quad\quad\text{otherwise.}\end{cases}\\
\end{split}\]
In other words, $\sum_{\lambda \in v\Lambda^n} T_\lambda T_\lambda^* = T_v$ as claimed.

It now follows that the operators $\{T_\lambda\}_{\lambda \in \Lambda}$ satisfy the Cuntz--Krieger relations, and thus generate a representation $\pi_x$ of $C^*(\Lambda)$ on $\H_x$.

We would like to use the gauge-invariant uniqueness theorem (Theorem 3.4 of \cite{KP}) to show that this representation is faithful.  We begin by checking that $T_v$ is nonzero for each $v \in \Lambda^0$.  To see this, fix $v \in \Lambda^0$.  Since $\Lambda$ is strongly connected, there exists $\lambda \in v\Lambda r(x_1)$.  We have $T_v [\xi^1_\lambda] = [\xi^1_\lambda]$; since $[\xi^1_\lambda]$ is a nontrivial element of $\H_x$, the operator $T_v$ is nonzero, as desired.

In order to apply the gauge-invariant uniqueness theorem, we must establish the existence of a gauge action on $\pi_x(C^*(\Lambda))$.  We do this by defining, for each $z \in \T^k$, a unitary $U_z \in B(\H_x)$:
\begin{equation}
\label{actionHilbertspace}
U_z[\xi^i_\mu] = z^{d(\mu) -(i, \ldots, i)} [\xi^i_\mu].
\end{equation}
It is a straightword calculation that $U_z$ is well defined and is unitary.  Thus we can define an action of $\T^k$ on $\pi_x(C^*(\Lambda))$ by $z \cdot T_\lambda := \Ad U_z (T_\lambda).$  
We check that $\pi_x \circ \gamma_z = \Ad U_z$ for any $z \in \T^k$, where $\gamma_z$ denotes the gauge action of $\T^k$ on $C^*(\Lambda)$.  The gauge invariant uniqueness theorem now tells us that $\pi_x$ is a faithful representation of $C^*(\Lambda)$ on the separable Hilbert space $\H_x$, as claimed.
\end{proof}

\begin{prop}
\label{prop:sc-faithful-SBFS}
Let $\Lambda$ be a strongly connected $k$-graph and fix $x \in \Lambda^\infty$.  The Hilbert space $\H_x$ is of the form $\ell^2(X)$ {with counting measure; for the definition of $X$ see Equation \eqref{def-space-count-measure}}. 
 Moreover, the faithful separable representation of Theorem \ref{prop:sc-faithful} is a $\Lambda$-semibranching representation.
\end{prop}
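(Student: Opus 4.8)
The plan is to produce the set $X$ explicitly, realize $\H_x$ as $\ell^2(X)$ against counting measure, and then put a $\Lambda$-semibranching function system on $X$ whose associated representation (via Theorem~\ref{thm:separable-repn}) is \emph{literally} the representation $\pi_x$ of Proposition~\ref{prop:sc-faithful}. For the first assertion, I would note that the connecting maps $\rho_i(\xi^i_\lambda) = \xi^{i+1}_{\lambda x_i}$ are isometries, since $\lambda \mapsto \lambda x_i$ is injective on $F_i = \Lambda v_i$ by cancellation in the factorization property. Hence the classes $\{[\xi^i_\lambda]\}$ form an orthonormal basis of the inductive limit $\H_x$ of \eqref{inductiveHilbert}, and setting
\begin{equation}\label{def-space-count-measure}
X := \varinjlim_i (F_i, \rho_i) = \Big(\bigsqcup_{i\in \N} F_i\Big)\big/{\sim},
\end{equation}
the map $[\xi^i_\lambda] \mapsto \delta_{[\xi^i_\lambda]}$ yields a unitary $\H_x \cong \ell^2(X) = L^2(X,\mu)$, with $\mu$ the counting measure. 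I would stress that $X$ need \emph{not} embed in $\Lambda^\infty$: if $x$ is eventually periodic, infinitely many classes can project to a single infinite path, so the counting-measure model on $X$ is the correct one rather than a measure on $\Lambda^\infty$.

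Next I would define the $\Lambda$-semibranching data on $(X,\mu)$. The range map descends via $r([\xi^i_\mu]) := r(\mu)$, so I set $D_v := \{z \in X : r(z) = v\}$ for $v \in \Lambda^0$; for an edge $f$ the domain is $D_f := D_{s(f)}$, the prefixing map is $\tau_f([\xi^i_\mu]) := [\xi^i_{f\mu}]$ (defined when $r(\mu) = s(f)$), and $R_f := \tau_f(D_f)$. The coding map $\tau^{e_i}$ is the corresponding partial inverse: I first check that $\{R_f : d(f) = e_i\}$ partitions $X$ (every class has a representative $(\nu,j)$ with $d(\nu) \geq e_i$ after extending by enough of the tail of $x$, and factoring out the leading color-$i$ edge places it in a unique $R_f$), and then set $\tau^{e_i}|_{R_f} := \tau_f^{-1}$. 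The decisive simplification is that, against counting measure, each injective $\tau_f$ has $\Phi_{\tau_f} \equiv 1$, since $(\mu \circ \tau_f)(\{z\}) = \mu(\{\tau_f(z)\}) = 1 = \mu(\{z\})$.

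I would then verify that this is a $\Lambda$-semibranching function system using Theorem~\ref{thm:SBFS-edge-defn}, checking Conditions (i)--(v) on the edges. Condition (i), $D_f = D_{s(f)}$, holds by construction, and $D_v \neq \emptyset$ follows from strong connectivity (a path $v \to v_1$ gives a class of range $v$); Condition (ii) holds because $r$ is single-valued on $X$. Condition (iii), the factorization coherence $\tau_\lambda \circ \tau_\alpha = \tau_\nu \circ \tau_\beta$, is immediate, as both sides prepend the common path $\lambda\alpha = \nu\beta$. Conditions (iv) (commuting coding maps) and (v) ($D_v = \bigcup_{g\in v\Lambda^{e_i}} R_g$ exactly) reduce to the factorization property together with the partition statement above. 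The one point demanding care --- and the main obstacle --- is the coherent definition of the coding maps on the inductive limit: since a class $[\xi^i_\nu]$ with $d(\nu)\not\geq e_i$ must first be re-expressed through the tail of $x$, I must confirm that $\tau^{e_i}$, and hence the partition into range sets, is independent of the chosen representative. This is precisely where the defining relation $\xi^i_\nu \sim \xi^{i+1}_{\nu x_i}$ and the factorization property do the work, and I would isolate it as the key lemma.

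Finally I would identify the representation. With $\Phi_{\tau_f} \equiv 1$, the formula of Theorem~\ref{thm:separable-repn} collapses to $S_f\xi = \chi_{R_f}\,(\xi \circ \tau^{e_i})$, so on a point mass $S_f \delta_z = \delta_{\tau_f(z)}$ when $z \in D_f$ and $0$ otherwise; extending multiplicatively via (CK2) gives $S_\lambda \delta_{[\xi^i_\mu]} = \delta_{[\xi^i_{\lambda\mu}]}$ when $s(\lambda) = r(\mu)$ and $0$ otherwise. Under the identification $\H_x \cong \ell^2(X)$ this is exactly the operator $T_\lambda$ of \eqref{eq:sc-faithful}, so $\pi_x$ is the $\Lambda$-semibranching representation attached to this system, as claimed. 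I would close by remarking that although Definition~\ref{def-lambda-SBFS-1} and Theorem~\ref{thm:separable-repn} are stated for finite $\Lambda$, their statements and proofs apply verbatim to the row-finite, strongly connected $\Lambda$ considered here.
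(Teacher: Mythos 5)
Your proposal is correct, and it reaches the paper's conclusion by a recognizably different route. Both proofs share the same engine: counting measure makes every Radon--Nikodym derivative identically $1$, prefixing is ``prepend,'' coding is ``strip an initial segment after extending along the tail of $x$,'' and the resulting operators $S_\lambda\delta_z = \delta_{\tau_\lambda(z)}$ visibly coincide with the $T_\lambda$ of \eqref{eq:sc-faithful}. The differences are two. First, the paper does not take $X$ to be the abstract set of equivalence classes: it builds an explicit transversal $G_i = F_i \setminus \bigcup_{j<i} F_j x_j \cdots x_{i-1}$ of minimal representatives, proves $\H_x = \bigoplus_i \ell^2(G_i)$, and sets $X = \bigsqcup_i G_i$ (this is the set in Equation \eqref{def-space-count-measure}). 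Your $X$ is canonically bijective to it, so nothing is lost; but the paper's prefixing map then carries a reduction-to-minimal-form step, whereas yours is tautological on classes and the burden shifts to exactly the representative-independence issues you isolate as your key lemma --- the same factorization-property arguments occur in both proofs, just in different places. Second, the paper verifies Conditions (a)--(d) of Definition \ref{def-lambda-SBFS-1} directly for all $n \in \N^k$, while you invoke Theorem \ref{thm:SBFS-edge-defn} to reduce to edge-level checks, which is more economical and is precisely the intended use of that theorem (cf.\ Corollary \ref{cor:repn_S}). What each buys: the paper gets a concrete, path-level description of $X$ and its maps at the cost of bookkeeping; you get brevity by reusing earlier machinery, at the cost of the well-definedness lemma and the (correctly acknowledged) need to extend the finite-graph statements to the row-finite strongly connected setting, a point the paper itself also passes over silently.
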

\begin{proof}
For each $i \geq 1$, define $G_i = \Lambda v_i \backslash \Lambda x_{i-1} \subseteq F_i.$  Equivalently,
\[G_i = F_i \backslash \left( \bigcup_{j < i} F_j x_j \cdots x_{i-1} \right).\]
 Thus by definition of $\mathcal{H}_x$ we have $\mathcal{H}_x \supseteq \oplus_{i\ge 1} \ell^2(G_i)$. To see that $\mathcal{H}_x\subseteq \oplus _{i\ge 1} \ell^2(G_i)$, first note that any vector of $\mathcal{H}_x$ is of the form $[\xi^i_\mu]$, where $\mu\in F_i$ for some $i\ge 1$ by definition. If $\mu\in F_i \setminus  \left( \bigcup_{j < i} F_j x_j \cdots x_{i-1}\right)$, then $\xi^i_\mu\in \ell^2(G_i)$. If $\mu\in F_i$ and $\mu$ lies in  $\cup_{j<i}F_j x_j \cdots x_{i-1}$, then there exists a unique $\ell \le i$ and $\widetilde{\mu}\in F_\ell \setminus \cup_{j<\ell} F_j x_j \cdots x_{\ell -1}$ such that $\mu=\widetilde{\mu}x_\ell x_{\ell+1}\dots x_{i-1}$. So $\xi^i_\mu \sim \xi^{\ell}_{\widetilde{\mu}}$ and $\xi^{\ell}_{\widetilde{\mu}}\in \ell^2(G_\ell)$, and hence $\mathcal{H}_x\subseteq \oplus_{i\ge 1} \ell^2(G_i)$. 
Consequently,
\[ \H_x = \bigoplus_{i \geq 1} \ell^2(G_i).\]

Set 

\begin{equation}
\label{def-space-count-measure}
X : = \bigsqcup_{i \geq 1} G_i,
\end{equation} 
and let $m$ denote counting measure on $X$: 
\[ m\left( \nu \right) = 1 \ \forall \ \nu \in G_i.\]
Then $\H_x = \bigoplus_{i\geq 1} \ell^2(G_i) = \ell^2(X) = L^2(X, m).$ 

We now describe the $\Lambda$-semibranching function system on $(X, m)$ which gives rise to the representation $\{T_\lambda\}_{\lambda \in \Lambda}$.
For a vertex $v \in \Lambda^0$, define $D_v = \{ \nu \in \bigsqcup_{i \geq 1} G_i: r(\nu) = v\},$ and for $\lambda \in \Lambda$ set
$\tau_\lambda: D_{s(\lambda)} \to D_{r(\lambda)}$ by 
\[ \tau_\lambda(\nu) = \rho, \text{ where } \rho \in G_j \text{ and } \lambda \nu = \rho  x_j \cdots x_{i-1} \text{ if } \nu \in G_i.\]
To see that $\tau_\lambda$ is well-defined, fix $\nu\in G_i$ and suppose that there exist $j_1\ne j_2 \le i$ and $\rho_1\in G_{j_1}$, $\rho_2\in G_{j_2}$ such that
\[
\lambda \nu=\rho_1 x_{j_1} x_{j_1+1}\dots x_{i-1} = \rho_2 x_{j_2} x_{j_2+1}\dots x_{i-1}
\]
Then by the factorization property, assuming without loss of generality that $j_2\ge j_1$, we must have $\rho_1 x_{j_1}\dots x_{j_2-1}=\rho_2$. Thus there exists a unique $j$ and $\rho\in G_{j}$ such that $\tau_\lambda(\nu)=\rho$, and hence $\tau_\lambda$ is well-defined.

It follows that
\[ R_\lambda = Ran(\tau_\lambda) = \{ \rho: \rho \in G_j \text{ for some } j \text{ and } \rho x_j \cdots x_i (0, d(\lambda)) = \lambda \text{ for some } i\}.\]
 If $ d(\lambda)=n$, then for $\rho\in G_j \cap R_\lambda$ find the smallest $i\geq j$ such that
 \[
 d(\rho)+(i-j)(1,\dots,1)\ge n.
 \]
 Then define the coding map $\tau^n$ on $G_j \cap R_\lambda$ by\footnote{If $i=j$ then we take $\tau^n(\rho) = \rho(n, d(\rho))$.}
 \begin{equation}
\label{eq:faithful-tau-n}
\tau^n(\rho) = \rho x_j \cdots x_{i-1}(n, d(\rho) + (i-j)(1, \ldots, 1)) \in G_i.
\end{equation}

Now it is straightforward to see that
\[ \tau^n \circ \tau_\lambda (\nu) = \nu,\]
justifying the name ``coding map.''

We claim that the sets and maps described above satisfy Conditions (a) - (d) of Definition \ref{def-lambda-SBFS-1} and hence define a $\Lambda$-semibranching function system on $X$.

First, we fix $n \in \N^k$ and check that for each $\nu \in \bigsqcup_{i\geq 1} G_i$ we have $\nu \in R_\lambda$ for precisely one $\lambda \in \Lambda^n$, which implies that $X = \bigsqcup_{\lambda \in \Lambda^n} R_\lambda$ for any $n\in \N^k$. Given $\nu \in G_i$, let $j \geq i$ be the smallest integer such that $d(\nu) + (j-i)(1, \ldots, 1) \geq n$.  Set $\lambda = \nu x_i \cdots x_{j-1}(0, n)$; then $\nu \in R_\lambda$. Moreover, for any other $\lambda' \in \Lambda^n$, we have $\nu x_i \cdots x_{j-1}(0, n) \not= \lambda'$, so $\nu \in R_\lambda$ for a unique $\lambda \in \Lambda^n$.  Since we are working in a discrete measure space, the Radon-Nikodym derivatives of the prefixing maps $\tau_\lambda$ are constantly equal to 1 on $D_{s(\lambda)}$.  This completes the check of Condition (a) of Definition \ref{def-lambda-SBFS-1}. 

By our   hypothesis that $\Lambda$ is strongly connected,  if $v \not= v_i$ for any $i$, we have  $\emptyset \not= v \Lambda v_i \subseteq F_i$ for all $i$.  
This implies the existence of at least one $\nu \in v \Lambda \cap \bigsqcup_{j \geq 1} G_j$, so $m(D_v) > 0$.  On the other hand, if $v=v_i$ then $v_i \in G_i$ is an element of $D_{v_i}$. Again, we have $m(D_{v_i}) > 0$, so  Condition (b) is satisfied.

The description in Equation \eqref{eq:faithful-tau-n} of the coding map $\tau^n$ makes it easy to check that, for $\rho \in G_j$ and for any $m, n\in \N^k$,
\[ \tau^{m+n}(\rho) = \rho x_j \cdots x_{\ell-1}(m+n, d(\rho) + (\ell-j)(1, \ldots, 1)) = \tau^m \circ \tau^n(\rho),\]
 where $\ell$ is the smallest such that $d(\rho)+(\ell-j)(1,\dots, 1)\ge m+n$.
In other words, Condition (d) holds.  

Similarly, if
$\tau_\lambda \circ \tau_\nu(\rho) = \alpha \in G_\ell$ for some $\rho \in G_j$, then there exist $j \geq i \geq \ell $ and $\eta \in G_i$ with $\alpha x_\ell \cdots x_{i-1} = \lambda \eta$ and $\eta x_i \cdots x_{j-1} = \nu \rho$.  On the other hand, if $\tau_{\lambda \nu}(\rho) = \beta \in G_n$, then 
\[ \beta x_n \cdots x_{j-1} = \lambda \nu \rho = \lambda \eta x_i \cdots x_{j-1} = \alpha x_\ell \cdots x_{j-1}.\]
Since $\alpha$ and $\beta$ are both in $\bigsqcup_{i\geq 1} G_i$, the factorization rule now implies that $n = \ell $ and $\alpha = \beta$.  
It follows that Condition (c) of Definition \ref{def-lambda-SBFS-1} is also satisfied, so the sets $D_v, R_\lambda$ with the coding and prefixing maps $\tau_\lambda, \tau^n$ determine a $\Lambda$-semibranching function system.

Since $(X, m)$ is a discrete measure space, the representation $\{S_\lambda\}_{\lambda\in \Lambda}$ of $C^*(\Lambda)$ given by this $\Lambda$-semibranching function system, described in Theorem \ref{thm:SBFS-repn} above, has the following formula.  Given $\eta \in G_i$, write $\delta_\eta \in L^2(X, m)$ for the indicator function supported at $\eta$. For $\nu \in G_j$, we have 
\begin{align*}
 S_\lambda (\delta_\eta)(\nu) &= \begin{cases}
  0, & \nu \not \in R_\lambda \\
  \delta_\eta(\tau^{d(\lambda)}(\nu)), & \text{ else.}
 \end{cases} 
\end{align*}
By construction, we have $\tau^{d(\lambda)}(\nu) = \nu x_j \cdots x_{\ell-1}(d(\lambda), d(\nu) + (\ell-j)(1, \ldots, 1)) \in G_\ell$.  Thus, the above formula becomes 
\begin{align*} S_\lambda(\delta_\eta)(\nu) &= \begin{cases} 
0, & \nu \not\in R_\lambda \text{ or } \tau^{d(\lambda)}(\nu) \not\in G_i \\
\delta_\eta(\tau^{d(\lambda)}(\nu))), & \text{ else.}
\end{cases}\\
& =\begin{cases} 
1, & \lambda \eta = \nu x_j \cdots x_{i-1} \\
0, & \text{ else.}
\end{cases}
\end{align*}
Since $\lambda \eta = \nu x_j \cdots x_{i-1}$ iff $\nu = \tau_\lambda(\eta)$, we can rewrite this as  
\begin{equation}
\label{eq:faithful-rep-SBFS-formula}
S_\lambda (\delta_\eta) = \delta_{\tau_\lambda(\eta)}.\end{equation}

To finish the proof, we observe that, under the isomorphism $\H_x \cong L^2(X, m)$, Equation \eqref{eq:faithful-rep-SBFS-formula} agrees with the formula for $T_\lambda$ given in Equation \eqref{eq:sc-faithful}.  This follows from the observation that $\tau_\lambda(\eta) \sim \lambda \eta$ by construction, so $[\xi^j_{\tau_\lambda(\eta)}] = [\xi^i_{\lambda \eta}]$.

\end{proof}

We now study under what conditions these representations are unitarily equivalent to one another.

\begin{prop}
Let $\Lambda$ be a row--finite strongly-connected $k$-graph.
 If $x,y \in \Lambda^\infty$ are infinite paths such that $\sigma^m(x) = \sigma^n(y)$ for some $m, n \in \N^k$, then the corresponding representations $\pi_x$ and $\pi_y$ given in Theorem~\ref{prop:faithful-repn} are unitarily equivalent.
\end{prop}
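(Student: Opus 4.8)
The plan is to reduce the statement to the single claim
\[ \pi_w \cong \pi_{\sigma^a(w)} \qquad \text{for every } w \in \Lambda^\infty \text{ and every } a \in \N^k. \]
Granting this, the hypothesis $\sigma^m(x) = \sigma^n(y)$ gives
\[ \pi_x \cong \pi_{\sigma^m(x)} = \pi_{\sigma^n(y)} \cong \pi_y, \]
where the middle identity is genuine equality, since $\pi_{\sigma^m(x)}$ and $\pi_{\sigma^n(y)}$ are built from one and the same infinite path. Thus everything comes down to shift-invariance of $\pi_{(\cdot)}$.

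To prove the claim I would first re-express $\H_w$ as an inductive limit indexed by the directed poset $(\N^k, \le)$ instead of by $\N$. For $p \le q$ in $\N^k$, let $w(p,p) \in \Lambda^0$ be the vertex $w$ occupies at position $p$ and let $w(p,q) \in w(p,p) \Lambda w(q,q)$ be the corresponding segment of $w$. Right multiplication by this segment, $\beta_{pq}(\xi_\mu) := \xi_{\mu \, w(p,q)}$, defines a map $\ell^2(\Lambda w(p,p)) \to \ell^2(\Lambda w(q,q))$; it is an isometry because right multiplication by $w(p,q)$ is injective (cancellation in $\Lambda$ sends the standard basis to an orthonormal set), and the family is compatible, $\beta_{qr} \circ \beta_{pq} = \beta_{pr}$, because $w(p,q)\, w(q,r) = w(p,r)$. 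Writing $\widetilde{\H}_w := \varinjlim_{p \in \N^k} \ell^2(\Lambda w(p,p))$, the system defining $\H_w$ in \eqref{inductiveHilbert} is exactly the restriction of this $\N^k$-system to the diagonal positions $\{\, i(1,\dots,1) : i \ge 0 \,\}$ (the level-$i$ space sits at position $(i-1)(1,\dots,1)$, and $\rho_i$ appends $x_i = w((i-1)(1,\dots,1), i(1,\dots,1))$). Since the diagonal is cofinal in $\N^k$, the inclusion of subsystems induces a canonical unitary $\H_w \cong \widetilde{\H}_w$. Moreover each operator $T_\lambda$ of \eqref{eq:sc-faithful} acts by $\xi_\mu \mapsto \xi_{\lambda \mu}$, i.e.\ by left multiplication, which commutes with every $\beta_{pq}$; hence the $T_\lambda$ descend to $\widetilde{\H}_w$ and the identification $\H_w \cong \widetilde{\H}_w$ intertwines the representations.

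Now apply the same re-description to $w' := \sigma^a(w)$. Because $w'(p,q) = w(p+a, q+a)$ for all $p \le q$, the $\N^k$-system computing $\widetilde{\H}_{w'}$ is, after the reindexing $p \mapsto p + a$, literally the $w$-system restricted to the positions $\{\, q \in \N^k : q \ge a \,\}$, with identical fibres and connecting maps. As $\{\, q \ge a \,\}$ is again cofinal in $\N^k$, the canonical map furnishes a unitary $\widetilde{\H}_{w'} \cong \widetilde{\H}_w$, and it too intertwines the left-multiplication operators $T_\lambda$. Composing $\H_{w'} \cong \widetilde{\H}_{w'} \cong \widetilde{\H}_w \cong \H_w$ yields a unitary $V \colon \H_{w'} \to \H_w$ with $V\, \pi_{w'}(t_\lambda)\, V^* = \pi_w(t_\lambda)$ for every $\lambda \in \Lambda$. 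Since the $t_\lambda$ generate $C^*(\Lambda)$, this proves $\pi_{\sigma^a(w)} \cong \pi_w$, and the proposition follows from the reduction above.

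The only real content is this reindexing: once $\H_w$ is recognized as an inductive limit over all of $(\N^k, \le)$, shift-invariance becomes the formal fact that two cofinal subsystems of a directed system of Hilbert spaces have canonically isomorphic limits. The points that require care --- and which I expect to be the main (if modest) obstacle --- are verifying that the connecting maps $\beta_{pq}$ really are isometric (using cancellation in $\Lambda$) and mutually compatible (using functoriality of $w$ and the factorization property), and checking that the shift $\sigma^a$ carries the defining system of $\H_{w'}$ onto the restriction of the $w$-system to a cofinal sub-poset. The intertwining of the $T_\lambda$ needs no computation, since left multiplication commutes with the right-multiplication connecting maps.
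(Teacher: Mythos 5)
Your proof is correct, but it is organized quite differently from the paper's. The paper proves the proposition by writing down an explicit map $\phi\colon \H_x \to \H_y$, namely $\phi([\xi^i_\mu]_x) = [\xi^j_{\mu\lambda_{i,j}}]_y$ where $\lambda_{i,j}$ is the common segment of $x$ and $y$ running from $r(x_i)$ to $r(y_j)$, and then verifies by hand --- via degree bookkeeping and repeated appeals to the factorization property --- that $\phi$ is well defined, injective, surjective, and intertwines $T^x_\lambda$ with $T^y_\lambda$. You instead reduce everything to shift-invariance $\pi_w \cong \pi_{\sigma^a(w)}$, and obtain that by re-indexing $\H_w$ as an inductive limit over the directed poset $(\N^k,\le)$ with right-multiplication isometries $\beta_{pq}$ as connecting maps; shift-invariance then falls out of the purely formal fact that cofinal subsystems (the diagonal $\{i(1,\dots,1)\}$, respectively $\{q \ge a\}$) of a directed system of Hilbert spaces have canonically unitarily isomorphic limits, and the intertwining is automatic because left multiplication commutes with the right-multiplication connecting maps. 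All the steps you flag do check out: right cancellation in a $k$-graph (from uniqueness of factorizations) makes each $\beta_{pq}$ isometric, functoriality of $w\colon\Omega_k\to\Lambda$ gives $w(p,q)w(q,r)=w(p,r)$ and hence compatibility, and $\sigma^a(w)(p,q) = w(p+a,q+a)$ identifies the shifted system with the restriction to $\{q\ge a\}$. What your route buys is modularity and the elimination of the paper's explicit well-definedness/injectivity/surjectivity computations (which are the bulk of the published proof); what the paper's route buys is a self-contained, concrete formula for the intertwining unitary without invoking the machinery of limits over general directed posets. The two unitaries are essentially the same map --- both act by appending connecting segments of the paths --- so the difference is one of packaging rather than of substance, but the packaging is genuinely distinct.
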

\begin{proof}
Suppose that there are infinite paths $x,y$ such that $\sigma^m(x)=\sigma^n(y)$ for some $m,n\in \N^k$.
We write $x=x_0x_1\cdots$ and $y=y_0y_1\cdots$, where $d(x_i)=d(y_i)=(1,1,\cdots,1)$ for all $i$.
Recall that in this setting, we have $x(\vec{k},\vec{\ell})=x_k x_{k+1}\cdots x_\ell$ for $\vec{k}=(k,k\cdots,k) \leq \vec{\ell}=(\ell,\ell,\cdots,\ell)$

To construct an isomorphism $\phi: \H_x \to \H_y$,  fix $[\xi^i_\mu] \in \H_x$. 
Without loss of generality, assume $\vec{i}\ge m$.  Then $\sigma^n(y)=\sigma^m(x)$ implies that $\sigma^{n-m+\vec{i}}(y)=\sigma^{\vec{i}}(x)$, and hence $y(n-m+\vec{i},\infty)=x(\vec{i},\infty)$. Choose the minimum $j\in \N$ such that $\vec{j}\ge n$ and $\vec{j}-n \ge \vec{i}-m$. Then let 
\[
\lambda_{i,j}=y(n-m+\vec{i},\vec{j}).
\]
Note that, if we write $q = \vec{j} - n - (\vec{i} - m)\in \N$, then $\lambda_{i,j}=x(\vec{i},q)$.
Thus, $\lambda_{i,j}$ is the common segment of $x$ and $y$ that lies between the vertices $r(x_i)$ and $r(y_j) $. It follows that multiplying by $\lambda_{i,j}$ on the right takes  $F_{i,x}$ to $F_{j,y}$. 

To be precise, we define
\begin{equation}
\phi([\xi^i_\mu]_x) : = [\xi^j_{\mu \lambda_{i,j}}]_y.\label{eq:phi-1}
\end{equation}


We first verify that $\phi$ is well defined: suppose that $[\xi^i_\mu]_x = [\xi^k_{\mu \mu'}]_x$, where $\vec{k} > \vec{i} \geq m$. We then have $\mu' = x_i \cdots x_{k-1}$ and 
\[\phi([\xi^i_\mu]_x) = [\xi^j_{\mu \lambda_{i,j}}]_y \text{ and } \phi([\xi^k_{\mu \mu'}]_x) = [\xi^\ell_{\mu \mu' \lambda_{k, \ell}}]_y,\]
where $j$ is the coordinatewise maximum of $n-m + \vec{i}$ and $\ell$ is the coordinatewise maximum of $n-m +\vec{k}$.

 This definition of $j, \ell$ implies that  $\ell - k = j-i$ is the coordinatewise maximum of $n-m$; consequently, 
 \[ d(\lambda_{i,j}) =\vec{j} - n + m - \vec{i} = \vec{\ell} -n+m - \vec{k} = d(\lambda_{k, \ell}).\]
  Also, $\ell - j = k -i > 0$, so we can write 
\[[\xi^j_{\mu \lambda_{i,j}}]_y = [\xi^\ell_{\mu \lambda_{i,j} \eta}]_y\]
where $\eta = y(\vec{j}, \vec{\ell})$.  In other words, $d(\eta) = \vec{\ell} - \vec{j} = \vec{k} - \vec{i} = d(\mu').$

Since $\vec{i} \geq m$, the finite paths $\mu' \lambda_{k,\ell}$ and $\lambda_{i,j} \eta$ lie on both $x$ and $y$.  In fact, 
\[s(\mu' \lambda_{k, \ell}) = r(y_\ell)= s(\lambda_{i,j} \eta) \text{ and } r(\mu' \lambda_{k,\ell}) = r(x_i) = r(\lambda_{i,j} \eta).\]
Moreover, $d(\mu' \lambda_{k,\ell}) = d(\mu') + d(\lambda_{k,\ell}) = d(\eta) + d(\lambda_{i,j}).$  The factorization property then tells us that \[\mu' \lambda_{k, \ell} = \lambda_{i,j} \eta.\]  

It now follows that 
\[\phi([\xi^i_\mu]_x) = [\xi^j_{\mu \lambda_{i,j}}]_y = [\xi^\ell_{\mu \lambda_{i,j}\eta}]_y = [\xi^\ell_{\mu \mu' \lambda_{k, \ell}}]_y = \phi([\xi^k_{\mu\mu'}]_x),\]
so $\phi$ is well defined.

To see that $\phi$ is surjective, fix $\nu \in( F_j)_y$ and consider the associated element $[\xi^j_\nu]_y \in \H_y$.  Pick $t \geq j$ large enough to ensure the existence of $\ell \in \N$ with $m \leq \vec{\ell} \leq  m + \vec{t} - n $: in other words, $\vec{t} - n \geq (\max_m - \min_m) \cdot (1, \ldots, 1)$. Since $\sigma^{\vec{t}}(y) = \sigma^{m+\vec{t} - n}(x)$, our choice of $t$ and $\ell$ ensure that $\lambda_{\ell, t}$ is a sub-path of $\nu y_j \cdots y_{t-1}$.  We can therefore write 
\[ \nu y_j \cdots y_{t-1} = \tilde{\nu} \lambda_{\ell, t}\]
for some $\tilde{\nu} \in (F_\ell)_x$.  It follows that $[\xi^j_\nu]_y= \phi([\xi^\ell_{\tilde{\nu}}]_x)$, so $\phi$ is surjective as claimed.

To see that $\phi$ is injective, suppose that $\phi([\xi^i_\mu]_x) = \phi([\xi^\ell_\nu]_x)$.  Without loss of generality, suppose that $\vec{i}\geq \vec{\ell}  \geq m$, so that 
\[\phi([\xi^i_\mu]_x) = [\xi^j_{\mu \lambda_{i,j}}]_y \quad \text{ and } \quad \phi([\xi^\ell_\nu]_x) = [\xi^{h}_{\nu \lambda_{\ell, h}}]_y \]
where $h$ is the coordinatewise maximum of $n-m+\vec{\ell}$ and $j$ is the coordinatewise maximum of $n-m+\vec{i}$.

Since $i \geq \ell$, we can write $i = \ell + q$ for $q \in \N$. Consequently, the coordinatewise maximum $j$ of $n-m+\vec{i}$ is the same as the sum of the coordinatewise maximum of $n-m+\vec{\ell}$ and $q$. In other words, $j=h+q$.
It follows that 
\[d(\lambda_{i,j}) = \vec{j} - n+m -\vec{i} = \vec{q} + \vec{h} - n +m - \vec{i} = \vec{h} - n +m - \vec{\ell} = d(\lambda_{\ell, h}).\]

Since  $j \geq h$, the equivalence relation on $\H_y$  implies that 
\[\nu \lambda_{\ell, h}y_h \cdots y_{j-1}  = \mu \lambda_{i,j}  \text{ if $j>h$; }\]
if $h=j$ then $i=\ell$ and we must have $\nu = \mu$. 

Observe that $j-h = i-\ell = q$.  Assuming that $j>h$, we can write 
\[\nu \lambda_{\ell, h} y_h \cdots y_{j-1} = \nu x_\ell \cdots x_{\ell + j-h-1} \lambda_{\ell +j-h, j} = \nu x_\ell \cdots x_{i-1} \lambda_{i,j}.\]

Then the factorization property implies that 
\[\nu x_\ell \cdots x_{i-1} = \mu,\]
and consequently $[\xi^\ell_\nu]_x = [\xi^i_\mu]_x$.  In other words, $\phi$ is injective.

Finally it is straightforward to check that $\phi\circ T^x_\lambda=T^y_\lambda\circ\phi$ for $\lambda\in\Lambda$, and hence the representations $\pi_x, \pi_y$ are equivalent, as claimed.
\end{proof}

Observe that the representation $\pi_x$ of Proposition~\ref{prop:sc-faithful} is in fact well-defined for any row-finite 
source-free  higher-rank graph $\Lambda$ and any $x \in \Lambda^\infty$, even if $\Lambda$ is not strongly connected. We only required the hypothesis that $\Lambda$ be strongly connected in order to ensure that $T_v$ was nonzero for each $v$.  However, a similar construction will give us a separable faithful representation of $C^*(\Lambda)$ for any row-finite, source-free $k$-graph $\Lambda$.
\begin{thm}
Let $\Lambda$ be a row-finite source-free $k$-graph.  There is a faithful separable representation of $C^*(\Lambda)$.
\label{pr:sep-faith}
\end{thm}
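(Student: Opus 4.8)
The plan is to reduce to the strongly connected case already handled in Proposition~\ref{prop:sc-faithful} by taking a direct sum of the representations $\pi_x$ over a family of base paths that collectively pass through every vertex. As observed in the paragraph immediately preceding the statement, the construction of $\pi_x$ goes through verbatim for any row-finite source-free $k$-graph and any $x \in \Lambda^\infty$: the Cuntz--Krieger relations (CK1)--(CK4) for the operators $\{T_\lambda\}$ and the unitaries $U_z$ of Equation~\eqref{actionHilbertspace} implementing the gauge action were verified using only the factorization property and the existence of the infinite tail $x_i x_{i+1} \cdots$, never strong connectivity. The sole place that strong connectivity entered was in showing $T_v \neq 0$ for each $v$, which used a path from $v$ into the range vertex of $x$. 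The idea is therefore to repair exactly this one deficiency.

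First I would, for each $v \in \Lambda^0$, use the source-free hypothesis (so that $v\Lambda^{e_i} \neq \emptyset$ for every basis vector $e_i$) to produce an infinite path $x^{(v)} \in v\Lambda^\infty$ with $r(x^{(v)}) = v$, and form the representation $\pi_{x^{(v)}}$ of $C^*(\Lambda)$ on the separable Hilbert space $\H_{x^{(v)}}$ exactly as in Proposition~\ref{prop:sc-faithful}. Because $v$ is the range vertex of $x^{(v)}$, the identity morphism $v$ lies in $F_1 = \Lambda v$, so $T_v^{(v)}[\xi^1_v] = [\xi^1_v] \neq 0$, which shows $\pi_{x^{(v)}}(t_v) \neq 0$.

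Next I would set $\pi := \bigoplus_{v \in \Lambda^0} \pi_{x^{(v)}}$ acting on $\H := \bigoplus_{v \in \Lambda^0} \H_{x^{(v)}}$. Since $\Lambda$ is countable, $\Lambda^0$ is countable, and each $\H_{x^{(v)}}$ is separable, so $\H$ is separable; since each $T_\lambda^{(v)}$ has norm at most $1$, the operators $T_\lambda := \bigoplus_v T_\lambda^{(v)}$ are bounded and again form a Cuntz--Krieger $\Lambda$-family, so $\pi$ is a representation of $C^*(\Lambda)$. Crucially, for every $v \in \Lambda^0$ the $v$-th summand $T_v^{(v)}$ is nonzero, whence $\pi(t_v) = \bigoplus_w T_v^{(w)} \neq 0$. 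To apply the gauge-invariant uniqueness theorem (Theorem~3.4 of \cite{KP}), I would set $U_z := \bigoplus_v U_z^{(v)}$, a unitary representation of $\T^k$ on $\H$ satisfying $\Ad U_z(T_\lambda) = z^{d(\lambda)} T_\lambda = \pi(\gamma_z(t_\lambda))$ for all $\lambda$. Hence $\Ad U_z$ preserves $\pi(C^*(\Lambda))$ and implements an action of $\T^k$ intertwining the gauge action $\gamma$; as each $\Ad U_z$ is isometric and $z \mapsto \Ad U_z(T_\lambda)$ is continuous on the generators, this action is strongly continuous. With all vertex projections $\pi(t_v)$ nonzero and the gauge action implemented, the gauge-invariant uniqueness theorem yields that $\pi$ is faithful, completing the proof.

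I expect the only genuinely nontrivial step to be the bookkeeping needed to confirm that nothing in the strongly connected argument used strong connectivity apart from the nonvanishing of the vertex projections; but this is precisely the content of the remark preceding the theorem, so once that is granted the remaining work is the routine assembly of the direct sum, the verification that it stays separable and that its vertex projections are all nonzero, and the transport of the gauge action to the direct sum.
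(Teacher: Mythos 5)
Your proposal is correct and follows essentially the same route as the paper: for each vertex $v$ choose (via the source-free hypothesis) an infinite path with range $v$, form the direct sum of the representations $\pi_x$ from Proposition~\ref{prop:sc-faithful} over these paths, check the vertex projections are nonzero and that the diagonal unitaries implement the gauge action, and conclude by the gauge-invariant uniqueness theorem. The only cosmetic differences are that the paper verifies the stronger statement $\pi(t_\mu)\neq 0$ for every $\mu\in\Lambda$ (using the summand indexed by $s(\mu)$) and cites the uniqueness theorem from \cite{bprz} rather than \cite{KP}, neither of which affects the argument.
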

\begin{proof}
For each vertex $v \in \Lambda^0$, choose an infinite path $y_v$ with 
$r(y_v) = v$. (The fact that $\Lambda$ is source-free implies we can always do this.) Since $\Lambda$ is a countable category, $\Lambda^0$ is countable, and we have made countably many choices.  Define
\[
 \H := \bigoplus_{v \in \Lambda^0} \H_{y_v}, \quad \pi := \bigoplus_{v\in \Lambda^0} \pi_{y_v},
\]
where $\H_{y_v}$ is the Hilbert space defined in Equation \eqref{inductiveHilbert}, and $\pi_{y_v}$ is the representation defined in Equation \eqref{eq:sc-faithful}.
Then $\H$ is a separable Hilbert space and $\pi$ is a representation of $C^*(\Lambda)$ on $\H$.  We know that $\pi(t_\mu)$ is nonzero for each $\mu \in \Lambda$, because
\[\pi_{y_{s(\mu)}}(t_\mu) [\xi^1_{s(\mu)}] = [\xi^1_\mu]\]
is a nonzero generator of $\H_{y_{s(\mu)}}$ and hence of $\H$.

Moreover, the unitary action {$\gamma$} of $\T^k$ on $\H_{y_v}$ defined in Equation \eqref{actionHilbertspace}  extends to a unitary action of $\T^k$ on $\H$ via the diagonal action.  Similarly, the fact that each representation $\pi_{y_v}$ intertwines this action with the gauge action on $C^*(\Lambda)$ implies that we have
\[z \cdot \pi(T) = \pi (\gamma_z(T)),\]
so  Theorem 2.1 of \cite{bprz} (the gauge-invariant uniqueness theorem) tells us that $\pi$ is a faithful separable representation of $C^*(\Lambda)$.
\end{proof}

Often, the trickiest part in checking that a family of subsets and coding/prefixing maps constitutes a $\Lambda$-semibranching function system is computing the Radon-Nikodym derivatives.  On a discrete measure space, this computation is rendered trivial, as we saw above.  Thus, in the spirit of Proposition \ref{prop:sc-faithful-SBFS}, we also have the following:

\begin{prop}\label{prop:sc-faithful-SBFS-Lambda}
Let $\Lambda$ be a finite, strongly connected $k$-graph.  Let $\pi_T$ be the standard infinite path representation of $C^*(\Lambda)$ on $\ell^2(\Lambda^\infty)$, given by 
\begin{equation}\label{eq:std-rep-on-ell-2}
\pi_T(t_\lambda)=\begin{cases} \xi_{\lambda x} \quad \text{if}\;\; r(x)=s(\lambda) \\ 0 \quad\quad\text{otherwise} \end{cases}
\end{equation}
where $x\in\Lambda^\infty$ and $\xi_x$ denote the associated basis vector of $\ell^2(\Lambda^\infty)$. Then $\pi_T$ is a $\Lambda$-semibranching representation.
\end{prop}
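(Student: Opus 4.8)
The plan is to exhibit $\pi_T$ as the $\Lambda$-semibranching representation associated to the standard prefixing and coding maps of Equations~\eqref{prefixmaps} and \eqref{eq:shift-map}, but now on the infinite path space $\Lambda^\infty$ equipped with \emph{counting} measure $m$ rather than the Perron--Frobenius measure $M$ of Example~\ref{example:SBFS-M}. Concretely, I would set $D_v := Z(v)$ for $v \in \Lambda^0$, take $\sigma_\lambda(x) = \lambda x$ with $R_\lambda = Z(\lambda)$, and let $\sigma^n$ be the shift map. The point is that essentially every computation in Proposition~3.4 of \cite{FGKP} carries over verbatim, the only change being the value of the Radon--Nikodym derivatives, which discreteness trivializes.

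The first step is to verify that these maps form a $\Lambda$-semibranching function system on $(\Lambda^\infty, m)$ in the sense of Definition~\ref{def-lambda-SBFS-1}. Since $m$ is counting measure, a measurable set is $m$-null precisely when it is empty, so the almost-everywhere statements become exact set identities. For each fixed $n \in \N^k$ the factorization property gives a genuine partition $\Lambda^\infty = \bigsqcup_{d(\lambda)=n} Z(\lambda)$ (every $x$ factors uniquely as $\lambda\,\sigma^n(x)$), which yields Condition~(a) of Definition~\ref{def-1-brach-system} with literal disjointness; and each $\sigma_\lambda$ is injective, so it preserves counting measure and hence $\Phi_{\sigma_\lambda} = \tfrac{d(m\circ\sigma_\lambda)}{dm} \equiv 1 > 0$. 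Conditions (b)--(d) of Definition~\ref{def-lambda-SBFS-1} are immediate: $\sigma_v = \mathrm{id}$ and $m(D_v) = |Z(v)| > 0$ (nonempty since $\Lambda$ is source-free); $\sigma_\lambda \sigma_\nu = \sigma_{\lambda\nu}$ and $R_\nu = Z(\nu) \subseteq Z(s(\lambda)) = D_\lambda$ whenever $r(\nu) = s(\lambda)$; and $\sigma^m \circ \sigma^n = \sigma^{m+n}$ holds for the shift maps. Alternatively, one can simply check Conditions (i)--(v) of Theorem~\ref{thm:SBFS-edge-defn} on the edges of $\Lambda$ and invoke that theorem.

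The second step is to identify the resulting representation. Feeding $\Phi_{\sigma_\lambda} \equiv 1$ into the formula of Theorem~\ref{thm:separable-repn} gives $S_\lambda \xi(x) = \chi_{Z(\lambda)}(x)\,\xi(\sigma^{d(\lambda)}(x))$. Evaluating on a basis vector $\xi_y$ of $\ell^2(\Lambda^\infty)$, this is nonzero precisely when $x \in Z(\lambda)$ and $\sigma^{d(\lambda)}(x) = y$, i.e.\ when $x = \lambda y$ (which forces $r(y) = s(\lambda)$); hence $S_\lambda \xi_y = \xi_{\lambda y}$ if $r(y) = s(\lambda)$ and $0$ otherwise, which is exactly the formula \eqref{eq:std-rep-on-ell-2} for $\pi_T(t_\lambda)$. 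Thus $\pi_T$ is the $\Lambda$-semibranching representation attached to $(\Lambda^\infty, m)$.

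I do not anticipate a genuine obstacle here; the entire content is that the discreteness of $(\Lambda^\infty, m)$ trivializes the Radon--Nikodym derivatives, exactly as in Proposition~\ref{prop:sc-faithful-SBFS}. The only points requiring care are the bookkeeping of the convention $(m\circ\sigma_\lambda)(E) = m(\sigma_\lambda(E))$ needed to confirm $\Phi_{\sigma_\lambda}\equiv 1$, and noting that this representation, while a bona fide $\Lambda$-semibranching representation, is \emph{not} separable, since $\Lambda^\infty$ is uncountable and so $\ell^2(\Lambda^\infty)$ is non-separable.
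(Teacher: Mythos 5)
Your proposal is correct and follows essentially the same route as the paper's own proof: the same sets $D_v = v\Lambda^\infty = Z(v)$, the same standard prefixing and coding maps, the observation that counting measure trivializes the Radon--Nikodym derivatives to the constant $1$, and the same final computation identifying $S_\lambda \xi_y = \xi_{\lambda y}$ with the formula \eqref{eq:std-rep-on-ell-2}. Your closing remark that the resulting representation is not separable matches the paper's own comment to that effect in the introduction to Section \ref{sec:faithful-rep}.
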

\begin{proof}
We first define subsets $\{D_v\}_{v\in \Lambda^0}$ of $\Lambda^\infty$ and prefixing and coding maps $\tau_\lambda, \tau^n$ which give rise to a $\Lambda$-semibranching function system on $\Lambda^\infty$.  Namely, we have 
\[ D_v = v\Lambda^\infty, \qquad \tau_\lambda(x) = \lambda x, \qquad \tau^n(x) = \sigma^n(x).\]
The fact that Condition (a) of Definition \ref{def-lambda-SBFS-1} holds for these sets follows from the fact that, for fixed $n \in \N^k$, every infinite path $x$ is of the form $\lambda y$ for a unique $\lambda \in \Lambda^n$.  Since $\Lambda^\infty$, in this setting, is a discrete measure space, the Radon-Nikodym derivatives are again  constantly equal to 1, and moreover Condition (b) holds.   Conditions (c) and (d) are immediate consequences of the factorization property.

Thus, the sets $\{D_v\}_{v\in \Lambda^0}$, together with the prefixing and coding maps $\{\tau_\lambda, \tau^n: \lambda \in \Lambda, n \in \N^k\}$, constitute a  $\Lambda$-semibranching function system on $\Lambda^\infty$, viewed as a discrete measure space.  The associated representation $\{S_\lambda\}_{\lambda \in \Lambda}$ is given by (for $x, y \in \Lambda^\infty$)
\[ S_\lambda(\delta_y)(x) = \chi_{Z(\lambda)}(x) \delta_y(\sigma^{d(\lambda)}(x)) = \delta_{\lambda y}(x),\]
so $S_\lambda(\delta_y) = \delta_{\lambda y}$ agrees with the formula for the standard infinite path representation \eqref{eq:std-rep-on-ell-2}.
\end{proof}


\end{document}